\definecolor{markercolor}{RGB}{124.9, 255, 160.65}
\pgfplotsset{width=10cm,compat=1.3}
\pgfplotsset{
tick label style={font=\small},
label style={font=\small},
legend style={font=\small}
}
\newcommand{\logLogSlopeTriangle}[5]
{

    \pgfplotsextra
    {
        \pgfkeysgetvalue{/pgfplots/xmin}{\xmin}
        \pgfkeysgetvalue{/pgfplots/xmax}{\xmax}
        \pgfkeysgetvalue{/pgfplots/ymin}{\ymin}
        \pgfkeysgetvalue{/pgfplots/ymax}{\ymax}

        \pgfmathsetmacro{\xArel}{#1}
        \pgfmathsetmacro{\yArel}{#3}
        \pgfmathsetmacro{\xBrel}{#1-#2}
        \pgfmathsetmacro{\yBrel}{\yArel}
        \pgfmathsetmacro{\xCrel}{\xArel}

        \pgfmathsetmacro{\lnxB}{\xmin*(1-(#1-#2))+\xmax*(#1-#2)} 
        \pgfmathsetmacro{\lnxA}{\xmin*(1-#1)+\xmax*#1} 
        \pgfmathsetmacro{\lnyA}{\ymin*(1-#3)+\ymax*#3} 
        \pgfmathsetmacro{\lnyC}{\lnyA+#4*(\lnxA-\lnxB)}
        \pgfmathsetmacro{\yCrel}{\lnyC-\ymin)/(\ymax-\ymin)} 

        \coordinate (A) at (rel axis cs:\xArel,\yArel);
        \coordinate (B) at (rel axis cs:\xBrel,\yBrel);
        \coordinate (C) at (rel axis cs:\xCrel,\yCrel);

        \draw[#5]   (A)-- node[pos=0.5,anchor=north] {1}
                    (B)-- 
                    (C)-- node[pos=0.5,anchor=west] {#4}
                    cycle;
    }
}
\newcommand{\logLogSlopeTriangleFlip}[5]
{

    \pgfplotsextra
    {
        \pgfkeysgetvalue{/pgfplots/xmin}{\xmin}
        \pgfkeysgetvalue{/pgfplots/xmax}{\xmax}
        \pgfkeysgetvalue{/pgfplots/ymin}{\ymin}
        \pgfkeysgetvalue{/pgfplots/ymax}{\ymax}

        \pgfmathsetmacro{\xBrel}{#1-#2}
        \pgfmathsetmacro{\yBrel}{#3}
        \pgfmathsetmacro{\xCrel}{#1}

        \pgfmathsetmacro{\lnxB}{\xmin*(1-(#1-#2))+\xmax*(#1-#2)} 
        \pgfmathsetmacro{\lnxA}{\xmin*(1-#1)+\xmax*#1} 
        \pgfmathsetmacro{\lnyA}{\ymin*(1-#3)+\ymax*#3} 
        \pgfmathsetmacro{\lnyC}{\lnyA+#4*(\lnxA-\lnxB)}
        \pgfmathsetmacro{\yCrel}{\lnyC-\ymin)/(\ymax-\ymin)} 

	\pgfmathsetmacro{\xArel}{\xBrel}
        \pgfmathsetmacro{\yArel}{\yCrel}

        \coordinate (A) at (rel axis cs:\xArel,\yArel);
        \coordinate (B) at (rel axis cs:\xBrel,\yBrel);
        \coordinate (C) at (rel axis cs:\xCrel,\yCrel);

        \draw[#5]   (A)-- node[pos=0.5,anchor=east] {#4}
                    (B)-- 
                    (C)-- node[pos=0.5,anchor=south] {1}
                    cycle;
    }
}
\newcommand{\tensor}[1]{\underline{\bm{#1}}}
\newcommand{\td}[2]{\frac{{\rm d}#1}{{\rm d}{\rm #2}}}
\newcommand{\pd}[2]{\frac{\partial#1}{\partial#2}}
\newcommand{\nor}[1]{\left\| #1 \right\|}
\newcommand{\LRp}[1]{\left( #1 \right)}
\newcommand{\LRa}[1]{\left\langle #1 \right\rangle}
\newcommand{\LRb}[1]{\left| #1 \right|}
\newcommand{\LRc}[1]{\left\{ #1 \right\}}
\newcommand{\LRl}[1]{\left. #1 \right|}
\newcommand{\Grad} {\ensuremath{\nabla}}
\newcommand{\Div} {\ensuremath{\nabla\cdot}}
\newcommand{\jump}[1] {\ensuremath{\llbracket#1\rrbracket}}
\newcommand{\avg}[1] {\ensuremath{\LRc{\!\{#1\}\!}}}
\newcommand{\Oh}{{\Omega_h}}
\renewcommand{\L}{L^2\LRp{\Omega}}
\newcommand{\Lk}{L^2\LRp{D^k}}
\newcommand{\Ldk}{L^2\LRp{\partial D^k}}
\newcommand{\Dhat}{\widehat{D}}
\newtheorem{theorem}{Theorem}[section]
\newtheorem{lemma}[theorem]{Lemma}
\newcommand{\eval}[2][\right]{\relax
  \ifx#1\right\relax \left.\fi#2#1\rvert}
\newcommand{\reviewerOne}[1]{{\color{black}#1}}
\newcommand{\reviewerTwo}[1]{{\color{black}#1}}
\newcommand{\reviewerThree}[1]{{\color{black}#1}}
\newcommand{\LinfDk}{L^{\infty}\LRp{D^k}}
\newcolumntype{C}[1]{>{\centering\let\newline\\\arraybackslash\hspace{0pt}}m{#1}}
\newcommand*\diff[1]{\mathop{}\!{\mathrm{d}#1}}
\renewcommand\d[1]{\mspace{6mu}\mathrm{d}#1\@ifnextchar\d{\mspace{-3mu}}{}}
\date{}
\author{Jesse Chan}
\title{Weight-adjusted discontinuous Galerkin methods: matrix-valued weights and elastic wave propagation in heterogeneous media}
\begin{document}


\maketitle

\begin{abstract}
Weight-adjusted inner products \cite{chan2016weight1,chan2016weight2} are easily invertible approximations to weighted $L^2$ inner products.  These approximations can be paired with a discontinuous Galerkin (DG) discretization to produce a time-domain method for wave propagation which is low storage, energy stable, and high order accurate for arbitrary heterogeneous media and curvilinear meshes.  In this work, we extend weight-adjusted DG (WADG) methods to the case of matrix-valued weights, with the linear elastic wave equation as an application.  We present a DG formulation of the symmetric form of the linear elastic wave equation, with upwind-like dissipation incorporated through simple penalty fluxes.  A semi-discrete convergence analysis is given, and numerical results confirm the stability and high order accuracy of WADG for several problems in elastic wave propagation.  
\end{abstract}


\section{Introduction}

Efficient and accurate methods for elastic wave propagation form a foundation for a broad range of applications, from seismic and medical imaging to rupture and earthquake simulation.  Finite differences are the most common choice of method \cite{virieux1986p}; however, finite element methods have garnered interest due to their low numerical dispersion and ability to accommodate geometrically flexible unstructured meshes.  

Typical methods for time-domain wave propagation utilize explicit time stepping, since the hyperbolic partial differential equations (PDEs) which govern wave propagation admit a reasonable stable time-step restriction.  However, \reviewerOne{unless special techniques (such as diagonal mass lumping) are applied,} finite element methods require the inversion of a global mass matrix when paired with explicit time integrators.  Spectral element methods (SEM) sidestep this issue on hexahedral meshes by \reviewerOne{choosing nodal basis functions which are discretely orthogonal with respect to an underintegrated $L^2$ inner product, which produces a} diagonal mass matrix \cite{komatitsch1998spectral}.  The inversion of a globally coupled matrix can also be avoided through the use of discontinuous Galerkin (DG) methods, which result in a locally invertible block diagonal mass matrices.  Due to difficulties in extending mass-lumping techniques from hexahedra to tetrahedra, high order DG methods are often employed for seismic simulations which require the use of simplicial meshes \cite{kaser2006arbitrary, dumbser2006arbitrary, de2007arbitrary,delcourte2009high,delcourte2015analysis,ye2016discontinuous}.  High order DG methods also lend themselves well to efficient implementations using Graphics Processing Units (GPUs) \cite{klockner2009nodal, modave2015nodal, modave2016gpu, chan2015gpu}.  

Most high order DG methods on simplicial meshes assume that models of media and material coefficients are constant over each element, \reviewerOne{which allows them to deal with discontinuous wave speeds across element interfaces.  However, if the media is such that material gradients are non-zero in the interior of an element, piecewise constant approximations can yield inaccurate simulations of wave propagation \cite{castro2010seismic, mercerat2015nodal, bencomo2015discontinuous}.  }
 This limitation can be overcome by incorporating sub-element heterogeneities into weighted mass matrices, resulting in a DG method which is both high order accurate and energy stable \cite{mercerat2015nodal, bencomo2015discontinuous}.  On tetrahedral meshes, this approach necessitates the pre-computation and storage of factorizations or inverses for each local mass matrix, which greatly increases both storage costs and data transferred at high orders of approximation.  These costs are especially problematic for accelerator architectures such as GPUs, which possess limited memory.  

Storage costs associated with weighted mass matrices can be avoided by approximating weighted $L^2$ inner products using weight-adjusted inner products, which result in easily invertible approximations to weighted mass matrices \cite{chan2016weight1,chan2016weight2}.  For sufficiently regular weights, high order accuracy is also retained.  When paired with an energy stable DG formulation, these approximations result in weight-adjusted DG methods (WADG), which preserve energy stability and high order accuracy while retaining a low asymptotic storage cost.  Additionally, unlike mass-lumping techniques, WADG methods do not rely on the use of inexact quadrature rules, and reduce to the exact inversion of mass matrices for constant weights.  

Weight-adjusted DG methods have been applied to acoustic wave propagation in heterogeneous media and on curvilinear meshes \cite{chan2016weight1,chan2016weight2}.  Both of these previous applications have involved scalar weighting functions.  In this work, we extend weight-adjusted inner products to matrix-valued weights.  \reviewerOne{This provides a way to approximate the inverse of a block system of mass matrices which are coupled together by a spatially varying matrix-valued weighting function.  This approximate inverse involves the application of scalar mass matrix inverses and the matrix-free application of a system of weighted block mass matrices using quadrature.  We show that this approach reduces storage and computational costs compared to the storage of inverses or factorizations of the full block mass matrix system, and apply this approximation to derive a stable and high order accurate method for elastic wave propagation in arbitrary heterogeneous media.  This method is based on an} energy stable DG formulation of the symmetric form of the elastic wave equations, with upwind-like numerical dissipation introduced through simple penalty fluxes \cite{warburton2013low}.  In contrast to the fluxes proposed in \cite{ye2016discontinuous}, the penalty fluxes used here can be made to be independent of material coefficients.  

This work proceeds as follows: Sections~\ref{sec:symelas} and \ref{sec:dgform} present an energy stable DG formulation with simple penalty fluxes for the symmetric hyperbolic form of the elastic wave equation, and discuss issues related to storage and inversion of local mass matrices for material coefficients with sub-element variations.  Section~\ref{sec:mwadg} extends weight-adjusted approximations to weighted $L^2$ inner products and mass matrices to the case of matrix-valued weights, and provides interpolation estimates which account for the regularity of the matrix weight.  These results are incorporated into a weight-adjusted DG method for the linear elastic wave equations in Section~\ref{sec:wadgelas}.  Finally, numerical results in Section~\ref{sec:numerical} demonstrate the accuracy of this method for several problems in linear elasticity.  

\section{Symmetric form of the elastic wave equation}
\label{sec:symelas}
We begin with the linear elastic wave equation in a domain $\Omega \in \mathbb{R}^d$.  These equations can be written as a first order velocity-stress system for velocity $\bm{v}$ and symmetric stress tensor $\tilde{\bm{S}}$ 
\begin{align*}
\rho \pd{\bm{v}}{t} &= \Div{\tilde{\bm{S}}} + \bm{f}\\
\pd{\tilde{\bm{S}}}{t} &= \frac{1}{2} \tensor{\bm{C}}\LRp{\Grad\bm{v} + \Grad\bm{v}^T},
\end{align*}
where $\bm{f}$ is the body force per unit volume, $\rho$ is density, and $\tensor{\bm{C}}$ is the symmetric constitutive stiffness tensor relating stress and strain.  We rewrite these equations as a symmetric hyperbolic system of PDEs \cite{hughes1978classical} using Voigt notation
\begin{align}
\rho \pd{\bm{v}}{t} &= \sum_{i=1}^d \bm{A}_i^T \pd{\bm{\sigma}}{\bm{x}_i}\nonumber + \bm{f}\\
\bm{C}^{-1} \pd{\bm{\sigma}}{t} &= \sum_{i=1}^d \bm{A}_i \pd{\bm{v}}{\bm{x}_i}\reviewerOne{,}
\label{eq:symelas}
\end{align}
where $\bm{C}$ is the symmetric matrix form of the constitutive tensor $\tensor{\bm{C}}$ and $\bm{\sigma}$ is a vector of length $N_d = \frac{d(d+1)}{2}$, the number of unique entries of the stress tensor $\tilde{\bm{S}}$ in $d$ dimensions.  We note that the matrices $\bm{A}_i$ are spatially constant, while $\rho,\bm{C}$, and $\bm{C}^{-1}$ can vary spatially.  Furthermore, we will assume that $\rho$ and $\bm{C}$ are positive-definite and bounded pointwise such that 
\begin{align*}
0 &< \rho_{\min} \leq \rho(\bm{x}) \leq \rho_{\max} < \infty, \\
0 &< c_{\min} \leq \bm{u}^T\bm{C}(\bm{x})\bm{u} \leq c_{\max} < \infty\\
0 &< \tilde{c}_{\min} \leq \bm{u}^T\bm{C}^{-1}(\bm{x})\bm{u} \leq \tilde{c}_{\max} < \infty
\end{align*}
for all $\bm{x}\in \mathbb{R}^d$ and all $\bm{u} \in \mathbb{R}^{N_d}$.  

In two dimensions, $\bm{v} = (\bm{v}_1, \bm{v}_2)^T$ and $\bm{\sigma}= (\sigma_{xx},\sigma_{yy},\sigma_{xy})^T$ 
\[
\tilde{\bm{S}} = \LRp{\begin{array}{cc}
\sigma_{xx} & \sigma_{xy}\\
\sigma_{xy} & \sigma_{yy}
\end{array}},
\]
while the matrices $\bm{A}_i$ are 
\[
\bm{A}_1 = \left(\begin{array}{cc}
1 & 0\\
0 & 0\\
0 & 1
\end{array}\right), \qquad 
\bm{A}_2 = \left(\begin{array}{cc}
0 & 0\\
0 & 1\\
1 & 0
\end{array}\right).
\]
In three dimensions, the velocity is $\bm{v} = (\bm{v}_1,\bm{v}_2,\bm{v}_3)^T$, while $\bm{\sigma}= (\sigma_{xx},\sigma_{yy},\sigma_{zz},\sigma_{yz},\sigma_{xz},\sigma_{xy})^T$ denotes the unique entries of the stress tensor $\tilde{\bm{S}}$
\[
\tilde{\bm{S}} = \LRp{\begin{array}{ccc}
\sigma_{xx} & \sigma_{xy} & \sigma_{xz}\\
\sigma_{xy} & \sigma_{yy}& \sigma_{yz}\\
\sigma_{xz} & \sigma_{yz}& \sigma_{zz}
\end{array}}.
\]
The matrices $\bm{A}_i$ are then
\[
\bm{A}_1 = \left(\begin{array}{ccc}
1 & 0 & 0\\
0 & 0& 0\\
0 & 0& 0\\
0 & 0& 0\\
0 & 0 & 1\\
0 & 1 & 0
\end{array}\right), \qquad 
\bm{A}_2 = \left(\begin{array}{ccc}
0 & 0 & 0\\
0 & 1 & 0\\
0 & 0 & 0\\
0 & 0 & 1\\
0 & 0 & 0\\
1 & 0 & 0
\end{array}\right), \qquad
\bm{A}_3 = \left(\begin{array}{ccc}
0 & 0 & 0\\
0 & 0 & 0\\
0 & 0 & 1\\
0 & 1 & 0\\
1 & 0 & 0\\
0 & 0 & 0
\end{array}\right)
\]

In general anisotropic media, $\bm{C}$ is symmetric and positive-definite.  For two-dimensional isotropic media, $\bm{C}$ \reviewerOne{and its inverse are} given as
\[
\bm{C}= \LRp{
\begin{array}{ccc}
2\mu+\lambda  &  \lambda  &     0\\
\lambda &       2\mu+\lambda &      0\\
0 & 0 & \mu
\end{array}}, \qquad 
\reviewerOne{\bm{C}^{-1} = \frac{1}{4\mu^2+\mu\lambda}\LRp{
\begin{array}{ccc}
2\mu+\lambda  &  -\lambda  &     0\\
-\lambda &       2\mu+\lambda &      0\\
0 & 0 & \frac{4\mu^2+\mu\lambda}{\mu}
\end{array}},} 
\]
where $\lambda,\mu$ are Lame parameters.  For three-dimensional isotropic media, $\bm{C}$ \reviewerOne{and its inverse are} given instead by
\[
\bm{C} = \LRp{\begin{array}{cccc}
2\mu+\lambda  &  \lambda  &     \lambda   & \\
\lambda &   2\mu+\lambda & \lambda   &  \\
\lambda & \lambda &   2\mu+\lambda &    \\
 &   &    & {\mu}\bm{I}^{3\times3} \\
\end{array}}, \qquad 
\reviewerOne{\bm{C}^{-1} = \frac{1}{2\mu+3\lambda}\LRp{\begin{array}{cccc}
\mu+\lambda  &  -\lambda/2  &     -\lambda/2   & \\
-\lambda/2 &   \mu+\lambda & -\lambda/2   &  \\
-\lambda/2 & -\lambda/2 &   \mu+\lambda &    \\
 &   &    & \frac{2\mu+3\lambda}{\mu}\bm{I}^{3\times3} \\
\end{array}}}
\]
We will consider both spatially varying isotropic and anisotropic media in this work.  

\section{An energy stable discontinuous Galerkin formulation for elastic wave propagation}
\label{sec:dgform}

\reviewerThree{Energy stable discontinuous Galerkin methods have been constructed based on non-symmetric formulations of the elastodynamics equations \cite{wilcox2010high}.  However, it is also straightforward to derive an energy stable discontinuous Galerkin formulation based on the symmetric first order formulation of the elastic wave equations (\ref{eq:symelas}).}  We assume that the domain $\Omega$ is Lipschitz and exactly triangulated by a mesh $\Omega_h$, which consists of elements $D^k$.  We further assume that each element $D^k$ is the image of a reference element $\widehat{D}$ under the local elemental mapping 
\[
\bm{x}^k = \bm{\Phi}^k \widehat{\bm{x}},
\]
where $\bm{x}^k = \LRc{x^k,y^k}$ for $d=2$ and $\bm{x}^k = \LRc{x^k,y^k,z^k}$ for $d=3$ denote physical coordinates on $D^k$ and $\widehat{\bm{x}} = \LRc{\widehat{x},\widehat{y}}$  for $d = 2$ and $\widehat{\bm{x}} = \LRc{\widehat{x},\widehat{y},\widehat{z}}$ for $d = 3$ denote coordinates on the reference element.  We denote the determinant of the Jacobian of $\bm{\Phi}^k$ as $J$, and refer to it as the Jacobian for the remainder of this work.  

We will approximate solution components over each element $D^k$ from an approximation space $V_h\LRp{D^k}$, which we define as the composition of the mapping $\bm{\Phi}^k$ and a reference approximation space $V_h\LRp{\widehat{D}}$
\[
V_h\LRp{D^k} = \bm{\Phi}^k \circ V_h\LRp{\widehat{D}}.
\]
The global approximation space $V_h\LRp{\Oh}$ is then defined as the direct sum of elemental approximation spaces
\[
V_h\LRp{\Oh} = \bigoplus_{D^k}V_h\LRp{D^k}.  
\]
For the remainder of this work, we will take $V_h\LRp{\widehat{D}} = P^N\LRp{\widehat{D}}$, where $P^N\LRp{\widehat{D}}$ is the polynomial space of total degree $N$ on the reference simplex.  In two dimensions, $P^N$ on a triangle is
\[
P^N\LRp{\widehat{D}} = \LRc{ \widehat{x}^i \widehat{y}^j, \quad 0 \leq i + j \leq N},
\] 
and in three dimensions, $P^N$ on a tetrahedron is
\[
P^N\LRp{\widehat{D}} = \LRc{ \widehat{x}^i \widehat{y}^j \widehat{z}^k, \quad 0 \leq i + j +k \leq N}.  
\]
We denote the $L^2$ inner product and norm over $D^k$ by $\LRp{\cdot,\cdot}_{\Lk}$, such that
\[
\reviewerOne{\LRp{\bm{g},\bm{h}}_{\Lk} = \int_{D^k} \bm{g}\cdot\bm{h} \diff{\bm{x}} = \int_{\widehat{D}} \bm{g}\cdot\bm{h} J\diff{\widehat{\bm{x}}}, \qquad \nor{\bm{g}}_{\Lk}^2 = \LRp{\bm{g},\bm{g}}_{\Lk},}
\]
where \reviewerOne{$\bm{g},\bm{h}$} are real vector-valued functions.  Global $L^2$ inner products and norms are using local $L^2$ inner products and norms
\[
\reviewerOne{\LRp{\bm{g},\bm{h}}_{\L} = \sum_{D^k\in \Oh} \LRp{\bm{g},\bm{h}}_{\Lk}, \qquad \nor{\bm{g}}_{\L}^2 = \sum_{D^k\in \Oh} \nor{\bm{h}}^2_{\Lk}.}
\]
We define also the $L^2$ inner product and norm over the boundary $\partial D^k$ of an element
\[
\LRa{\bm{u},\bm{v}}_{\Ldk} = \int_{\partial D^k}\bm{u}\cdot\bm{v}\diff{\bm{x}} = \sum_{f\in \partial D^k}\int_{\widehat{f}}\bm{u}\cdot\bm{v}J^f\diff{\widehat{\bm{x}}}, \qquad \nor{\bm{u}}_{\Ldk}^2 = \LRa{\bm{u},\bm{u}}_{\Ldk},
\]
where $J^f$ is the Jacobian of the mapping from a reference face $\widehat{f}$ to a physical face $f$ of an element.  

Let $f$ be a face of an element $D^{k}$ with neighboring element $D^{k,+}$ and unit outward normal $\bm{n}$.  Let $u$ be a function which is discontinuous across element interfaces.  \reviewerOne{We define the interior value  $u^-$ and exterior value $u^+$  on a face $f$ of $D^k$ such that}
\[
{u}^- = \LRl{u}_{f \cap \partial D^k}, \qquad {u}^+ = \LRl{u}_{f \cap  \partial D^{k,+}}.  
\]
The jump and average of a scalar function $u\in V_h\LRp{\Omega_h}$ over $f$ are then defined as
\[
\jump{u} = u^+ - u^-, \qquad \avg{u} = \frac{u^+ + u^-}{2}.
\]
Jumps and averages of \reviewerTwo{vector-valued functions} $\bm{u}\in \mathbb{R}^m$ and $\tilde{\bm{S}}\in \mathbb{R}^{m\times n}$ are then defined component-wise 
\[
\LRp{\jump{\bm{u}}}_i = \jump{\bm{u}_i}, \qquad 1\leq i \leq m, \qquad \LRp{\jump{\tilde{\bm{S}}}}_{ij} = \jump{\tilde{\bm{S}}_{ij}}, \qquad 1\leq i \leq m, \quad 1\leq j \leq n.
\]

We can now specify a DG formulation for the linear elastic wave equation (\ref{eq:symelas}).  Symmetric hyperbolic systems readily admit a DG formulation based on penalty fluxes \cite{chan2016short}.  For the linear elastic wave equation in symmetric first order form, this formulation is given as
\begin{align}
&\sum_{D^k\in \Oh} \LRp{\rho \pd{\bm{v}}{t},\bm{w}}_{\Lk} = \sum_{D^k\in \Oh} \LRp{\LRp{ \sum_{i=1}^d \bm{A}_i^T\pd{\bm{\sigma}}{\bm{x}_i} + \bm{f},\bm{w}}_{\Lk}  + \LRa{\frac{1}{2}\bm{A}_n^T\jump{\bm{\sigma}} + \frac{\tau_{\bm{v}}}{2}\bm{A}_n^T\bm{A}_n\jump{\bm{v}},\bm{w}}_{\Ldk}} \nonumber\\
&\sum_{D^k\in \Oh} \LRp{\bm{C}^{-1} \pd{\bm{\sigma}}{t},\bm{q}}_{\Lk} = \sum_{D^k\in \Oh}\LRp{\LRp{\sum_{i=1}^d \bm{A}_i \pd{\bm{v}}{\bm{x}_i},\bm{q}}_{\Lk} + \LRa{\frac{1}{2}\bm{A}_n\jump{\bm{v}} + \frac{\tau_{\sigma}}{2}\bm{A}_n\bm{A}_n^T\jump{\bm{\sigma}},\bm{q}}_{\Ldk}},
\label{eq:dgform}
\end{align}
for all $\bm{w},\bm{q}\in V_h\LRp{\Omega_h}$.  Here, $\bm{A}_n$ is the normal matrix defined on a face $f$ as $\bm{A}_n = \sum_{i=1}^d \bm{n}_i \bm{A}_i$.  In two dimensions, $\bm{A}_n$ is 
\[
\bm{A}_n =  \LRp{\begin{array}{cc}
{n}_x & 0\\
 0 & {n}_y\\
{n}_y & {n}_x
\end{array}}.
\]
while in three dimensions, $\bm{A}_n$ is
\[
\bm{A}_n =  \LRp{\begin{array}{ccc}
{n}_x & 0 & 0\\
 0 & {n}_y & 0\\
0&  0 & {n}_z \\
0 &{n}_z & {n}_y\\
{n}_z & 0 & {n}_x\\
{n}_y & {n}_x & 0\\
\end{array}}.  
\]
\reviewerOne{
The terms $\tau_{\bm{v}},\tau_{\bm{\sigma}}$ are penalty parameters which are introduced on element interfaces.  We assume that $\tau_{\bm{v}},\tau_{\bm{\sigma}} \geq 0$ and that they are piecewise constant over each shared face between two elements.  These penalty parameters can be taken to be zero, which corresponds to a DG method using a non-dissipative central flux \cite{fezoui2005convergence, mercerat2015nodal}.  In Section~\ref{sec:energystability}, we show that when $\tau_{\bm{v}},\tau_{\bm{\sigma}}$ are positive, they introduce a dissipation of energy in a manner which is similar to the upwind flux \cite{hesthaven2007nodal, wilcox2010high}.  We note that the stability of the DG formulation is independent of the magnitude of these parameters; however, as discussed in Section~\ref{sec:scaling}, naively choosing the values of these parameters can result in a stiffer semi-discrete system of ODEs and a smaller maximum stable timestep.  

In many applications, $\bm{f}$ is a point source or Dirac delta, which is not $L^2$ integrable.  Thus, $\LRp{\bm{f},\bm{w}}_{\Lk}$ may not be well-defined.  In such cases when $\bm{f}(\bm{x}) = \bm{\beta}(\bm{x}) \delta(\bm{x}-\bm{x}_0)$ (for some vector $\bm{\beta}(\bm{x}) \in \mathbb{R}^d$), we commit a variational crime and evaluate its contribution as 
\[
\sum_k \LRp{\bm{\beta(\bm{x})}\delta(\bm{x}-\bm{x}_0),\bm{w}}_{\Lk} = \int_{\Omega} \bm{w} \cdot \bm{\beta} \delta(\bm{x}-\bm{x}_0)
 =  \bm{w}(\bm{x}_0)\cdot \bm{\beta(\bm{x}_0)}.  
\]
}

Finally, we note that, unlike the penalty DG formulation given in \cite{ye2016discontinuous}, material coefficients $\rho,\bm{C}$ appear only on the left hand side of (\ref{eq:dgform}).  Thus, efficient techniques for constant coefficient formulations \cite{chan2015bbdg} can be used to evaluate the right hand side of the formulation, even in the presence of sub-element variations in $\rho,\bm{C}$.  

\subsection{Boundary conditions}

In this work, we assume boundary conditions on velocity and traction of the form
\[
\bm{v} = \bm{v}_{\rm bc}, \qquad \tilde{\bm{S}}\bm{n} = \bm{t}_{\rm bc}
\]
where $\bm{v}_{\rm bc}$ and $\bm{t}_{\rm bc}$ are given values.  Traction boundary conditions where $\bm{t}_{\rm bc} = 0$ are referred to as free-surface boundary conditions.  We follow \cite{leveque2002finite,wilcox2010high} and impose boundary conditions on the DG formulation through exterior values and jumps of the solution.  Boundary conditions on the normal component of the stress can be imposed by noting that the numerical flux contains the term $\jump{\bm{A}_n^T\bm{\sigma}} = \jump{\tilde{\bm{S}}\bm{n}}$.  

For a face which lies on a boundary, velocity boundary conditions are imposed by setting 
\begin{align*}
\jump{\bm{v}} &= 2\LRp{\bm{v}_{\rm bc} - \bm{v}^- }, \qquad \jump{\bm{A}_n^T\bm{\sigma}}=\jump{\tilde{\bm{S}}\bm{n}} = 0, 
\end{align*}
while traction boundary conditions are enforced through
\begin{align*}
\jump{\bm{A}_n^T\bm{\sigma}} = \jump{\tilde{\bm{S}}\bm{n}} = 2\LRp{\bm{t}_{\rm bc} - \tilde{\bm{S}}^-\bm{n} }= 2\LRp{\bm{t}_{\rm bc} - \bm{A}_n^T\bm{\sigma}^- }, \qquad \jump{\bm{v}} = 0.
\end{align*}
For problems which involve the truncation of infinite or large domains, absorbing boundary conditions are required.  For such cases, we impose simple \reviewerOne{extrapolation} absorbing boundary conditions \cite{leveque2002finite} through jumps 
\begin{align*}
\jump{\bm{A}_n^T\bm{\sigma}} = \jump{\tilde{\bm{S}}\bm{n}} = - \tilde{\bm{S}}^-\bm{n} = -\bm{A}_n^T\bm{\sigma}^- , \qquad \jump{\bm{v}} = - \bm{v}^-. 
\end{align*}
We note that more accurate absorbing conditions can be imposed using, for example, perfectly matched layers \cite{berenger1994perfectly} or high order absorbing boundary conditions \cite{hagstrom2004new,modave2016high}.  

In all cases, boundary conditions are imposed by computing numerical fluxes using these modified jumps.  This imposition guarantees energy stability for free surface, non-reflective, and homogeneous velocity boundary conditions.  


\subsection{Energy stability}
\label{sec:energystability}

\reviewerOne{One can show that the DG formulation is energy stable for zero body load, zero velocity and traction boundary conditions, and non-reflective boundary conditions.  We note that this stability holds for both the case when $\tau_{\bm{v}}, \tau_{\bm{\sigma}}$ are zero (which corresponds to a central flux) and when they are positive (which corresponds to a penalty flux).   }
Integrating the velocity equations of (\ref{eq:dgform}) by parts gives
\begin{align*}
\sum_{D^k \in \Oh}\LRp{\rho \pd{\bm{v}}{t},\bm{w}}_{\Lk} &= \sum_{D^k \in \Oh}-\LRp{ \sum_{i=1}^d \bm{\sigma},\bm{A}_i \pd{\bm{w}}{\bm{x}_i}}_{\Lk}  + \LRa{\bm{A}_n^T\avg{\bm{\sigma}} + \frac{\tau_{\bm{v}}}{2}\bm{A}_n^T\bm{A}_n\jump{\bm{v}},\bm{w}}_{\Ldk} \nonumber\\
\sum_{D^k \in \Oh}\LRp{\bm{C}^{-1} \pd{\bm{\sigma}}{t},\bm{q}}_{\Lk} &= \sum_{D^k \in \Oh}\LRp{\sum_{i=1}^d \bm{A}_i \pd{\bm{v}}{\bm{x}_i},\bm{q}}_{\Lk} + \LRa{\frac{1}{2}\bm{A}_n\jump{\bm{v}} + \frac{\tau_{\sigma}}{2}\bm{A}_n\bm{A}_n^T\jump{\bm{\sigma}},\bm{q}}_{\Ldk} 
\end{align*}
Taking $(\bm{w},\bm{q}) = (\bm{v},\bm{\sigma})$ and adding both equations together yields
\begin{align*}
\sum_{D^k\in \Oh}&\frac{1}{2}\pd{}{t}\LRp{\LRp{\rho \bm{v},\bm{v}}_{\Lk} + \LRp{\bm{C}^{-1} \bm{\sigma},\bm{\sigma}}_{\Lk}} \\
&=\sum_{D^k\in \Omega_h}\LRa{\bm{A}_n^T\avg{\bm{\sigma}} + \frac{\tau_{\bm{v}}}{2}\bm{A}_n^T\bm{A}_n\jump{\bm{v}},\bm{v}}_{\partial D^k}+ \LRa{\frac{1}{2}\bm{A}_n\jump{\bm{v}} + \frac{\tau_{\bm{\sigma}}}{2}\bm{A}_n\bm{A}_n^T\jump{\bm{\sigma}},\bm{\sigma}}_{\partial D^k}  \\
&=\sum_{D^k\in \Omega_h}\sum_{f\in \partial D^k}\int_{f}\LRp{\reviewerTwo{\bm{v}^T}\bm{A}_n^T\avg{\bm{\sigma}} + \frac{\tau_{\bm{v}}}{2}\reviewerTwo{\bm{v}^T}\bm{A}_n^T\bm{A}_n\jump{\bm{v}} + \frac{1}{2}\reviewerTwo{\bm{\sigma}^T}\bm{A}_n\jump{\bm{v}} + \frac{\tau_{\bm{\sigma}}}{2}\reviewerTwo{\bm{\sigma}^T}\bm{A}_n\bm{A}_n^T\jump{\bm{\sigma}}}\diff{\bm{x}},
\end{align*}
\reviewerOne{where the term 
\[
\sum_{D^k\in \Oh}\frac{1}{2}\LRp{\LRp{\rho \bm{v},\bm{v}}_{\Lk} + \LRp{\bm{C}^{-1} \bm{\sigma},\bm{\sigma}}_{\Lk}}
\]
is the total energy of the system.}  Let $\Gamma_h$ denote the set of unique faces in $\Oh$, and let $\Gamma_v,\Gamma_\sigma,\Gamma_{\rm abc}$ denote the parts of the boundary where velocity, traction, and non-reflective boundary conditions are imposed, respectively.  We separate surface terms into contributions from interior shared faces and from boundary faces.  On an interior shared face, we sum contributions from the two adjacent elements to yield
\begin{align*}
&\sum_{f\in \Gamma_h \setminus \partial \Omega} \int_{f}\LRp{\reviewerTwo{\bm{v}^T}\bm{A}_n^T\avg{\bm{\sigma}} + \frac{\tau_{\bm{v}}}{2} \reviewerTwo{\bm{v}^T}\bm{A}_n^T\bm{A}_n\jump{\bm{v}} + \frac{1}{2}\reviewerTwo{\bm{\sigma}^T}\bm{A}_n\jump{\bm{v}} + \frac{\tau_{\bm{\sigma}}}{2}\reviewerTwo{\bm{\sigma}^T}\bm{A}_n\bm{A}_n^T\jump{\bm{\sigma}}}\diff{\bm{x}}\\
&= - \sum_{f \in\Gamma_h \setminus \partial \Omega} \int_{f} \LRp{\frac{\tau_{\bm{v}}}{2}\LRb{\bm{A}_n\jump{\bm{v}}}^2 + \frac{\tau_{\bm{\sigma}}}{2}\LRb{\bm{A}_n^T\jump{\bm{\sigma}}}^2}\diff{\bm{x}}.  
\end{align*}
For faces which lie on the boundary $\Gamma_v$ where velocity boundary conditions are imposed, $\jump{\bm{v}} = -2\bm{v}^-$, $\jump{\bm{A}_n^T\bm{\sigma}} = 0$, and $\bm{A}_n^T\avg{\bm{\sigma}} = \bm{Sn}^-$, implying that 
\begin{align*}
&\sum_{f\in \Gamma_v} \int_f\LRp{ \reviewerTwo{\bm{v}^T}\bm{A}_n^T\avg{\bm{\sigma}} + \frac{\tau_{\bm{v}}}{2}\reviewerTwo{\bm{v}^T}\bm{A}_n^T\bm{A}_n\jump{\bm{v}} + \frac{1}{2}\reviewerTwo{\bm{\sigma}^T}\bm{A}_n\jump{\bm{v}} + \frac{\tau_{\bm{\sigma}}}{2}\reviewerTwo{\bm{\sigma}^T}\bm{A}_n\bm{A}_n^T\jump{\bm{\sigma}}}\diff{\bm{x}}\\
&= \sum_{f\in \Gamma_v} \int_f \LRp{\reviewerTwo{\LRp{\bm{v}^-}^T}\bm{A}_n^T \bm{\sigma}^- - \reviewerTwo{\LRp{\bm{\sigma}^-}^T}\bm{A}_n \bm{v}^-  - \tau_{\bm{v}} \LRb{\bm{A}_n \bm{v}^-}^2}\diff{\bm{x}} = - \sum_{f\in \Gamma_v} \int_f \LRp{\tau_{\bm{v}}  \LRb{\bm{A}_n\bm{v}^-}^2}\diff{\bm{x}}.
\end{align*}
For faces which lie on $\Gamma_\sigma$, $\bm{A}_n^T\jump{\bm{\sigma}} = -2\bm{A}_n^T\bm{\sigma}^-$, $\bm{A}_n^T\avg{\bm{\sigma}} = 0$, and $\jump{\bm{v}} = 0$, yielding a similar contribution
\[
\sum_{f\in \Gamma_\sigma} \int_f \LRp{\reviewerTwo{{\bm{v}}^T}\bm{A}_n^T\avg{\bm{\sigma}} + \frac{\tau_{\bm{v}}}{2}\reviewerTwo{{\bm{v}}^T}\bm{A}_n^T\bm{A}_n\jump{\bm{v}} + \frac{1}{2}\reviewerTwo{{\bm{\sigma}}^T}\bm{A}_n\jump{\bm{v}} + \frac{\tau_{\bm{\sigma}}}{2}\reviewerTwo{{\bm{\sigma}}^T}\bm{A}_n\bm{A}_n^T\jump{\bm{\sigma}}}\diff{\bm{x}} = - \sum_{f\in \Gamma_\sigma} \int_f \LRp{\tau_{\bm{\sigma}}  \LRb{\bm{A}_n^T\bm{\sigma}^-}^2}\diff{\bm{x}}.
\]
Finally, for faces in $\Gamma_{\rm abc}$ we have $\bm{A}_n^T\avg{\bm{\sigma}} =\frac{1}{2} \bm{A}_n^T\bm{\sigma}^- $, $\bm{A}_n^T\jump{\bm{\sigma}} = -\bm{A}_n^T\bm{\sigma}^-$, and $\jump{\bm{v}} = -\bm{v}^-$, yielding
\begin{align*}
\sum_{f\in \Gamma_{\rm abc}} &\int_f \LRp{\reviewerTwo{\bm{v}^T}\bm{A}_n^T\avg{\bm{\sigma}} + \frac{\tau_{\bm{v}}}{2}\reviewerTwo{\bm{v}^T}\bm{A}_n^T\bm{A}_n\jump{\bm{v}} + \frac{1}{2}\reviewerTwo{\bm{\sigma}^T}\bm{A}_n\jump{\bm{v}}  + \frac{\tau_{\bm{\sigma}}}{2}\reviewerTwo{\bm{\sigma}^T}\bm{A}_n\bm{A}_n^T\jump{\bm{\sigma}}}\diff{\bm{x}} =\\
& - \sum_{f\in \Gamma_{\rm abc}} \int_f \LRp{ \frac{\tau_{\bm{v}}}{2}  \LRb{\bm{A}_n\bm{v}^-}^2 + \frac{\tau_{\bm{\sigma}}}{2}  \LRb{\bm{A}_n^T\bm{\sigma}^-}^2}\diff{\bm{x}}.  
\end{align*}
Combining all face contributions together gives the following result:
\begin{theorem}
The DG formulation (\ref{eq:dgform}) is energy stable for $\tau_{\bm{v}},\tau_{\bm{\sigma}} \geq 0$, in the sense that
\begin{align}
&\sum_{D^k\in \Oh} \frac{1}{2}\pd{}{t}\LRp{\LRp{\rho \bm{v},\bm{v}}_{\Lk} + \LRp{\bm{C}^{-1} \bm{\sigma},\bm{\sigma}}_{\Lk}} = - \sum_{f \in\Gamma_h \setminus \partial \Omega} \int_{f}\LRp{ \frac{\tau_{\bm{v}}}{2}\LRb{\bm{A}_n\jump{\bm{v}}}^2 + \frac{\tau_{\bm{\sigma}}}{2}\LRb{\bm{A}_n^T\jump{\bm{\sigma}}}^2}\diff{\bm{x}}\nonumber\\
& - \sum_{f\in \Gamma_v} \int_f \LRp{\tau_{\bm{v}}  \LRb{\bm{A}_n\bm{v}^-}^2}\diff{\bm{x}}  - \sum_{f\in \Gamma_\sigma} \int_f \LRp{\tau_{\bm{\sigma}}  \LRb{\bm{A}_n^T\bm{\sigma}^-}^2}\diff{\bm{x}} - \sum_{f\in \Gamma_{\rm abc}} \int_f \LRp{\frac{\tau_{\bm{v}}}{2} \LRb{\bm{A}_n\bm{v}^-}^2 + \frac{\tau_{\bm{\sigma}}}{2} \LRb{\bm{A}_n^T\bm{\sigma}^-}^2}\diff{\bm{x}} \leq 0.  
\label{eq:thm:stability}
\end{align}
\label{thm:stability}
\end{theorem}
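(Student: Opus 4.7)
The plan is to test the weak formulation (\ref{eq:dgform}) with $(\bm{w},\bm{q}) = (\bm{v},\bm{\sigma})$, sum the two equations, and exhibit the right-hand side as a sum of nonpositive terms. Since $\rho$ and $\bm{C}^{-1}$ are symmetric and positive definite, the two left-hand side volume terms combine into the time derivative of the total energy
\[
\sum_{D^k\in\Oh}\frac{1}{2}\pd{}{t}\LRp{\LRp{\rho\bm{v},\bm{v}}_{\Lk} + \LRp{\bm{C}^{-1}\bm{\sigma},\bm{\sigma}}_{\Lk}}.
\]
The task then reduces to showing that, after applying the test functions, the right-hand side is $\leq 0$.

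To see this, I would integrate the velocity equation by parts elementwise to move the spatial derivative off of $\bm{\sigma}$ and onto $\bm{v}$. Because the matrices $\bm{A}_i$ are constant, the resulting volume contribution $-\sum_i(\bm{\sigma},\bm{A}_i\,\partial\bm{v}/\partial\bm{x}_i)_{\Lk}$ cancels exactly against the volume term $\sum_i(\bm{A}_i\,\partial\bm{v}/\partial\bm{x}_i,\bm{\sigma})_{\Lk}$ appearing in the stress equation. What remains are face integrals, which I would split into contributions from interior faces in $\Gamma_h\setminus\partial\Omega$ and from the three disjoint boundary pieces $\Gamma_v$, $\Gamma_\sigma$, $\Gamma_{\rm abc}$.

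On each interior face shared by two elements, I would sum contributions from the two adjacent sides, using $\bm{n}^+ = -\bm{n}^-$ so that $\bm{A}_n$ flips sign under exchange of $-$ and $+$. This antisymmetry makes the ``central'' cross terms $\bm{v}^T\bm{A}_n^T\avg{\bm{\sigma}}$ and $\tfrac{1}{2}\bm{\sigma}^T\bm{A}_n\jump{\bm{v}}$ telescope to zero, while the penalty pieces combine (using symmetry of $\bm{A}_n^T\bm{A}_n$) into the nonpositive quantities $-\tfrac{\tau_{\bm{v}}}{2}\LRb{\bm{A}_n\jump{\bm{v}}}^2$ and $-\tfrac{\tau_{\bm{\sigma}}}{2}\LRb{\bm{A}_n^T\jump{\bm{\sigma}}}^2$. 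For each boundary type, I would substitute the prescribed jumps from the boundary-condition subsection—$\jump{\bm{v}} = -2\bm{v}^-$ on $\Gamma_v$, $\bm{A}_n^T\jump{\bm{\sigma}} = -2\bm{A}_n^T\bm{\sigma}^-$ on $\Gamma_\sigma$, and both prescribed on $\Gamma_{\rm abc}$—and verify that the central cross terms either cancel (as on $\Gamma_v$, where $\bm{v}^T\bm{A}_n^T\bm{\sigma}^- - \bm{\sigma}^T\bm{A}_n\bm{v}^- = 0$ since $\bm{v}=\bm{v}^-$ and $\bm{\sigma}=\bm{\sigma}^-$) or combine with the penalty terms to yield a nonpositive quadratic form.

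I expect the main obstacle to be purely bookkeeping: correctly tracking the sign of $\bm{A}_n$ under the $\pm$ exchange on interior faces, and ensuring that on each boundary piece the antisymmetric cross terms $\bm{v}^T\bm{A}_n^T\bm{\sigma}^- - \bm{\sigma}^T\bm{A}_n\bm{v}^-$ collapse (which they must, by the scalar identity $\bm{a}^T\bm{M}\bm{b} = \bm{b}^T\bm{M}^T\bm{a}$). Once the cancellations are verified face by face, summing all contributions yields (\ref{eq:thm:stability}) and nonpositivity is immediate since $\tau_{\bm{v}},\tau_{\bm{\sigma}}\geq 0$.
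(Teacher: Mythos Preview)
Your proposal is correct and follows essentially the same route as the paper: integrate the velocity equation by parts so the flux becomes $\bm{A}_n^T\avg{\bm{\sigma}}$, test with $(\bm{v},\bm{\sigma})$ so the volume terms cancel, then split the remaining face integrals into interior faces and the three boundary pieces, handling each exactly as you describe. The paper carries out the same face-by-face bookkeeping and arrives at the identical dissipation identity.
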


Since $\rho$ and $\bm{C}^{-1}$ are positive definite, the left hand side of (\ref{eq:thm:stability}) is an $L^2$-equivalent norm on $(\bm{v},\bm{\sigma})$, and Theorem~\ref{thm:stability} implies that the magnitude of the DG solution $(\bm{v},\bm{\sigma})$ is non-increasing in time.  This also shows that dissipation present for positive penalization constants $\tau_{\bm{v}},\tau_{\sigma}$ acts on non-conforming components with non-zero jumps $\bm{A}_n^T\jump{\bm{v}}$ and $\bm{A}_n\jump{\bm{\sigma}}$.  In fact, it was shown in \cite{chan2016short} that, in the limit as $\tau_{\bm{v}},\tau_{\bm{\sigma}} \rightarrow\infty$, the eigenspaces of DG discretizations split into a conforming part consisting of $\bm{u},\bm{\sigma}$ which satisfy
\[
\bm{A}_n\jump{\bm{u}} = 0, \qquad \bm{A}_n^T\jump{\bm{\sigma}} = 0
\]
and an non-conforming part (defined through the $L^2$ orthogonal complement) corresponding to eigenvalues contain real parts which approach $-\infty$.  For the linear elastic wave equations, these conditions are equivalent to requirements of $C^0$ continuity for $\bm{u}$ and normal continuity of the stress tensor $\jump{\tilde{\bm{S}}}\bm{n} = 0$.

\subsection{The semi-discrete matrix system for DG}

The solution to (\ref{eq:dgform}) can be approximated by discretizing in space and using an explicit time integrator, which requires only evaluations of local contributions over $D^k$ to the DG formulation
\begin{align}
\LRp{\rho \pd{\bm{v}}{t},\bm{w}}_{\Lk} &= {\LRp{ \sum_{i=1}^d \bm{A}_i^T\pd{\bm{\sigma}}{\bm{x}_i},\bm{w}}_{\Lk}  + \LRa{\frac{1}{2}\bm{A}_n^T\jump{\bm{\sigma}} + \frac{\tau_{\bm{v}}}{2}\bm{A}_n^T\bm{A}_n\jump{\bm{v}},\bm{w}}_{\Ldk}} \nonumber\\
\LRp{\bm{C}^{-1} \pd{\bm{\sigma}}{t},\bm{q}}_{\Lk} &= {\LRp{\sum_{i=1}^d \bm{A}_i \pd{\bm{v}}{\bm{x}_i},\bm{q}}_{\Lk} + \LRa{\frac{1}{2}\bm{A}_n\jump{\bm{v}} + \frac{\tau_{\sigma}}{2}\bm{A}_n\bm{A}_n^T\jump{\bm{\sigma}},\bm{q}}_{\Ldk}}. 
\label{eq:dgloc} 
\end{align}

Let $\LRc{\phi_i}_{i=1}^{N_p}$ be a basis for $P^N\LRp{\widehat{D}}$.\footnote{\reviewerOne{In our implementations, we use nodal basis functions at Warp and Blend interpolation points \cite{warburton2006explicit}.   These basis functions are defined implicitly using an orthogonal polynomial basis on the reference simplex \cite{hesthaven2007nodal}.  However, we note that the implementation and formulation are independent of the specific choice of polynomial basis.  }}  We define the reference mass matrix $\widehat{\bm{M}}$ and the physical mass matrix $\bm{M}$ for an element $D^k$ as
\[
\LRp{\widehat{\bm{M}}}_{ij} = \int_{\widehat{D}}\phi_j\phi_i \diff{\widehat{\bm{x}}}, \qquad \LRp{\bm{M}}_{ij} = \int_{D^k} \phi_j\phi_i \diff{\bm{x}} = \int_{\widehat{D}}\phi_j\phi_i J\diff{ \widehat{\bm{x}}}.
\]
For affine mappings, $J$ is constant and $\bm{M} = J\widehat{\bm{M}}$.  We also define weak differentiation matrices $\bm{S}_{k}$ and face mass matrix $\bm{M}_f$ such that
\[
\LRp{\bm{S}_{k}}_{ij} =  \int_{D^k} \pd{\phi_j}{\bm{x}_k}\phi_i \diff{\bm{x}}, \qquad \LRp{\bm{M}_{f}}_{ij} =  \int_{f} \phi_j \phi_i \diff{\bm{x}} = \int_{\widehat{f}} \phi_j \phi_i J^f\diff{\widehat{\bm{x}}},
\]
where $J^f$ is the Jacobian of the mapping from a reference face $\widehat{f}$ to $f$.  For affinely mapped simplices, $J^f$ is also constant and $\bm{M}_f = J^f \widehat{\bm{M}}_f$, where the definition of the reference face mass matrix $\widehat{\bm{M}}_f$ is analogous to the definition of the reference mass matrix $\widehat{\bm{M}}$.

Finally, we define weighted mass matrices.  Let $w(\bm{x})\in \mathbb{R}$ and $\bm{W}(\bm{x})\in \mathbb{R}^{m\times n}$.  Then, scalar and matrix-weighted mass matrices $\bm{M}_w$ and $\bm{M}_{\bm{W}}$ are defined through
\[
\LRp{\bm{M}_w}_{ij} = \int_{D^k} w(\bm{x}) \phi_j(\bm{x})  \phi_i(\bm{x})\diff{\bm{x}}, \qquad \bm{M}_{\bm{W}} = \LRp{\begin{array}{ccc}
\bm{M}_{\bm{W}_{1,1}}& \ldots & \bm{M}_{\bm{W}_{1,n}}\\
\vdots & \ddots & \vdots\\
\bm{M}_{\bm{W}_{m,1}} & \ldots & \bm{M}_{\bm{W}_{m,n}}\\
\end{array}},
\]
where $\bm{M}_{\bm{W}_{i,j}}$ is the scalar weighted mass matrix weighted by the $(i,j)$ entry of $\bm{W}$. 

Local contributions to the DG variational form may then be evaluated in a quadrature-free manner using these matrices.  Let $\bm{\Sigma}_i,\bm{V}_i$ denote vectors containing degrees of freedom for solution components $\bm{\sigma}_i, \bm{v}_i$ such that 
\begin{align*}
\bm{\sigma}_{i}(\bm{x},t) &= \sum_{j=1}^{N_p} \reviewerOne{\LRp{\bm{\Sigma}_i(t)}_j} \phi_j(\bm{x}), \qquad 1 \leq i \leq N_d\\ 
\bm{v}_{i}(\bm{x},t) &= \sum_{j=1}^{N_p} \reviewerOne{\LRp{\bm{V}_i(t)}_j} \phi_j(\bm{x}), \qquad 1 \leq i \leq d. 
\end{align*}
Then, the local DG formulation can be written as a block system of ordinary differential equations (ODEs)
by concatenating $\bm{\Sigma}_i,\bm{V}_i$ into single vectors $\bm{\Sigma},\bm{V}$ and using the Kronecker product $\otimes$
\begin{align}
\bm{M}_{\rho \bm{I}} \pd{\bm{V}}{t} &= \sum_{i=1}^d  \LRp{\bm{A}_i^T\otimes \bm{S}_{i}} \bm{\Sigma} + \sum_{f\in \partial D^k}\LRp{\bm{I}\otimes \bm{M}_f}\bm{F}_{v}\nonumber\\
\bm{M}_{\bm{C}^{-1}} \pd{\bm{\Sigma}}{t} &= \sum_{i=1}^d \LRp{\bm{A}_i\otimes \bm{S}_{i}}\bm{V}  +  \sum_{f\in \partial D^k}\LRp{\bm{I}\otimes\bm{M}_f}\bm{F}_{\sigma}. 
\label{eq:dgmat}
\end{align}
where $\bm{F}_{v}$ and $\bm{F}_{\sigma}$ denote degrees of freedom for velocity and stress numerical fluxes.  

In order to apply standard time integration methods, we must invert $\bm{M}_{\rho\bm{I}}$ and $\bm{M}_{\bm{C}^{-1}}$ to isolate $\pd{\bm{v}}{t}$ and $\pd{\bm{\sigma}}{t}$ on the left hand side.  While the inversion of $\bm{M}_{\rho\bm{I}}$ and $\bm{M}_{\bm{C}^{-1}}$ can be parallelized from element to element, doing so typically requires either the precomputation and storage of large dense matrix inverses or the on-the-fly construction and solution of a large dense matrix system at every time-step.  The former option requires a large amount of storage, while the latter option is computationally expensive and difficult to parallelize.  This cost can be overcome for $\rho, \bm{C}$ which are constant over an element $D^k$, in which case $\bm{M}_{\rho\bm{I}}$ is block diagonal with identical blocks $\bm{M}_\rho = \rho\bm{M}$, while $\bm{M}_{\bm{C}^{-1}}$ reduces to
\[
\bm{M}_{\bm{C}^{-1}} = \LRp{\begin{array}{ccc}
\bm{C}_{1,1}^{-1}\bm{M}& \ldots & \bm{C}^{-1}_{1,N_d}\bm{M}\\
\vdots & \ddots & \vdots\\
\bm{C}_{N_d,1}^{-1}\bm{M} & \ldots & \bm{C}^{-1}_{N_d,N_d}\bm{M}\\
\end{array}} 
= \LRp{\bm{C}^{-1}\otimes \bm{M}}.
\]
Then, $\bm{M}^{-1}_\rho = \frac{1}{\rho}\bm{M}^{-1} = \frac{1}{J\rho}\widehat{\bm{M}}^{-1}$, and $\bm{M}^{-1}_{\bm{C}^{-1}} = \bm{C}\otimes \bm{M}^{-1} = \bm{C}\otimes \LRp{\frac{1}{J}\widehat{\bm{M}}^{-1}}$, and each matrix inverse can be applied using the inverse of the reference mass matrix $\widehat{\bm{M}}^{-1}$ and the values of $\rho, \bm{C}$, and $J$ over each element.  Applying this observation to (\ref{eq:dgmat}) then yields the following local system of ODEs 
\begin{align*}
\pd{\bm{V}}{t} &= \sum_{i=1}^d  \LRp{\frac{1}{\rho}\bm{A}_i^T\otimes \bm{D}_{i}} \bm{\Sigma} + \sum_{f\in \partial D^k}\LRp{\frac{1}{\rho}\bm{I}\otimes \bm{L}_f}\bm{F}_{v}\\
\pd{\bm{\Sigma}}{t} &= \sum_{i=1}^d \LRp{\bm{C}\bm{A}_i\otimes \bm{D}_{i}}\bm{V}  +  \sum_{f\in \partial D^k}\LRp{\bm{C}\otimes\bm{L}_f}\bm{F}_{\sigma},
\end{align*}
where we have introduced the differentiation matrix $\bm{D}_i = \bm{M}^{-1}\bm{S}_i$ and lift matrix $\bm{L}_f = \bm{M}^{-1}\bm{M}_f$.  For affine elements, both derivative and lift matrices may be applied using only geometric factors and reference derivative and lift matrices.  

Unfortunately, if $\rho$ and $\bm{C}$ vary spatially within an element, the above approach can no longer be used to invert $\bm{M}_\rho$ and $\bm{M}_{\bm{C}^{-1}}$ in an efficient and low-storage manner.  
For isotropic media, one way to address sub-element variations in material parameters is to diagonalize the matrix $\bm{C}$ through a change of variables \cite{etienne2010hp}.  This results in a local system of ODEs with only scalar weighted mass matrices \cite{mercerat2015nodal}, which can be treated using scalar weight-adjusted approximations.  We take a different approach in addressing these issues and approximate the matrix-weighted $L^2$ inner product (and corresponding matrix-weighted mass matrix $\bm{M}_{\bm{C}^{-1}}$) using a weight-adjusted approximation which is low storage, simple to invert, energy stable, and provably high order accurate for spatially varying weights $\rho, \bm{C}$ with sufficiently regularity.

\section{Weight-adjusted inner products for matrix-valued weights}
\label{sec:mwadg}
Weight-adjusted inner products are high order accurate approximations of weighted $L^2$ inner products.  These can be interpreted as generalizations of mass lumping techniques, reducing to mass lumping when integrals are evaluated with appropriate quadrature rules.  These weight-adjusted inner products result in weight-adjusted mass matrices, whose inverses approximate the inverse of a weighted $L^2$ mass matrix.  

We wish to apply weight-adjusted approximations to avoid the inversion of $\bm{M}_\rho$ and $\bm{M}_{\bm{C}^{-1}}$.  Approximating the inverse of $\bm{M}_\rho$ can be done using weight-adjusted approximations for scalar weights \cite{chan2016weight1,chan2016weight2}, which we review in Section~\ref{sec:swadg}.  We then extend scalar weight-adjusted approximations to matrix-valued weights in Section~\ref{sec:mwadg} to approximate the inverse of $\bm{M}_{\bm{C}^{-1}}$.  

\subsection{Scalar weight adjusted inner products}
\label{sec:swadg}

We introduce standard Lebesgue $L^p$ norms and their associated $L^p$ spaces over a general domain $\Omega$
\begin{align*}
\nor{u}_{L^p\LRp{\Omega}} = \LRp{\int_{\Omega} u^p \diff{\bm{x}}}^{1/p}, \qquad  
L^p\LRp{\Omega} = \LRc{u: \Omega\rightarrow \mathbb{R}, \quad \nor{u}_{L^p\LRp{\Omega}} < \infty} 
\end{align*}
for $1 \leq p < \infty$.  For $p = \infty$, these are defined as
\begin{align*}
\nor{u}_{L^{\infty}\LRp{\Omega}} = \inf\LRc{C \geq 0: \LRb{u\LRp{\bm{x}}} \leq C \quad \forall \bm{x}\in \Omega}, \qquad L^{\infty}\LRp{\Omega} &= \LRc{u: \Omega\rightarrow \mathbb{R}, \quad \nor{u}_{L^{\infty}\LRp{\Omega}} < \infty}.
\end{align*}
These induce $L^p$ Sobolev seminorms and norms of degree $s$ 
\begin{align*}
\LRb{u}_{W^{s,p}\LRp{\Omega}} &= \LRp{\sum_{\LRb{\alpha}= s} \nor{ D^{\alpha} u}_{L^p\LRp{\Omega}}^p}^{1/p}, \qquad \LRb{u}_{W^{s,\infty}\LRp{\Omega}} = \max_{\LRb{\alpha}= s} \nor{D^{\alpha}u}_{L^{\infty}\LRp{\Omega}}\\
\nor{u}_{W^{s,p}\LRp{\Omega}} &= \LRp{\sum_{\LRb{\alpha}\leq s} \nor{ D^{\alpha} u}_{L^p\LRp{\Omega}}^p}^{1/p}, \qquad \nor{u}_{W^{s,\infty}\LRp{\Omega}} = \max_{\LRb{\alpha}\leq s} \nor{D^{\alpha}u}_{L^{\infty}\LRp{\Omega}}.
\end{align*}
where $\alpha = \LRc{\alpha_1,\ldots,\alpha_d}$ is a multi-index such that
\begin{align*}
D^{\alpha}u &= \pd{^{\alpha_1}}{x^{\alpha_1}}\pd{^{\alpha_2}}{y^{\alpha_2}} u, \qquad d = 2\\
D^{\alpha}u &= \pd{^{\alpha_1}}{x^{\alpha_1}}\pd{^{\alpha_2}}{y^{\alpha_2}}\pd{^{\alpha_3}}{z^{\alpha_3}} u, \qquad d = 3.
\end{align*}

\reviewerOne{Let $\Pi_N$ denote the $L^2$ projection on the element ${D}^k$.  For affine elements where $J$ is constant such that
\[
\LRp{wu,v}_{\Lk} = \LRp{wu,vJ}_{\widehat{D}} = J\LRp{wu,v}_{\widehat{D}},
\]
$\Pi_N$ is equivalent to the $L^2$ projection on the reference element $\widehat{D}$.}  We define two operators $T_w: L^2\LRp{D^k} \rightarrow P^N\LRp{D^k}$ and $T^{-1}_w: L^2\LRp{D^k} \rightarrow P^N\LRp{D^k}$ such that
\begin{align*}
T_wu &= \Pi_N(wu)\\
\LRp{wT^{-1}_wu,v}_{\Lk} &= \LRp{u,v}_{\Lk}, \qquad \forall v\in P^N\LRp{D^k}.  
\end{align*}
A weighted $L^2$ inner product $\LRp{wu,v}_{\Lk}$ can be approximated by a weight-adjusted inner product
\[
\LRp{wu,v}_{\Lk} = \LRp{T_wu,v}_{\Lk}\approx \LRp{T^{-1}_{1/w}u,v}_{\Lk}.  
\]
based on the observation that $T^{-1}_{1/w}u \approx uw$.  \reviewerOne{The intuition behind this approximation is that, by the definition of $T^{-1}_{1/w}$, 
\[
\LRp{\frac{1}{w}T^{-1}_{1/w}u - u,v}_{\Lk} = 0, \qquad \forall v\in P^N\LRp{D^k}.  
\]
This shows that $\frac{1}{w}T^{-1}_{1/w}u - u$ is orthogonal to all polynomials of degree $N$, implying that $\frac{1}{w}T^{-1}_{1/w}u - u\approx 0$ and $T^{-1}_{1/w}u \approx wu$ (for $w(x), u(x)$ which are smooth and well-represented by polynomials).  
}

This approximation is made precise by the following estimates for approximations of the product $uw$ and weighted moments on affinely mapped elements:
\begin{theorem}[Theorem 5 in \cite{chan2016weight1}]
Let $D^k$ be quasi-regular with representative size $h = {\rm diam}\LRp{D^k}$.  For $N \geq 0$, $w\in W^{N+1,\infty}\LRp{D^k}$, and $u \in W^{N+1,2}\LRp{D^k}$, 
\begin{align}
\nor{{u}{w}-T_{w}u}_{L^2\LRp{D^k}} &\leq C_wh^{N+1} \nor{u}_{W^{N+1,2}\LRp{D^k}},\\
\nor{{u}{w}-T^{-1}_{1/w}u}_{L^2\LRp{D^k}} &\leq C_wh^{N+1}   \nor{u}_{W^{N+1,2}\LRp{D^k}}.
\end{align}
where $C_w = C\nor{w}_{L^{\infty}\LRp{D^k}}\nor{\frac{1}{w}}_{L^{\infty}\LRp{D^k}} \nor{w}_{W^{N+1,\infty}\LRp{D^k}}$.
\end{theorem}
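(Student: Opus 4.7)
The plan is to reduce both inequalities to a standard $L^2$-projection error estimate for the product $wu$ on the quasi-regular element $D^k$, with the second bound requiring extra care because $T^{-1}_{1/w}$ is defined implicitly via a weighted Galerkin condition rather than as an orthogonal projection.

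The first inequality I would dispatch directly. Since $T_w u = \Pi_N\LRp{wu}$ by definition, $\nor{uw - T_w u}_{\Lk}$ is precisely the $L^2\LRp{D^k}$ best-approximation error for $wu$. A standard Bramble--Hilbert / scaling argument on a quasi-regular simplex of diameter $h$ yields
\[
\nor{wu - \Pi_N\LRp{wu}}_{\Lk} \leq C h^{N+1} \LRb{wu}_{W^{N+1,2}\LRp{D^k}},
\]
and the Leibniz product rule together with $w \in W^{N+1,\infty}\LRp{D^k}$ gives the bound $\LRb{wu}_{W^{N+1,2}\LRp{D^k}} \leq C \nor{w}_{W^{N+1,\infty}\LRp{D^k}} \nor{u}_{W^{N+1,2}\LRp{D^k}}$, which establishes the first estimate (the positive factors $\nor{w}_{\LinfDk}\nor{1/w}_{\LinfDk}$ may be harmlessly absorbed into $C_w$ to unify the two constants).

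For the second inequality I would split the error via the triangle inequality,
\[
wu - T^{-1}_{1/w} u = \LRp{wu - \Pi_N\LRp{wu}} + \LRp{\Pi_N\LRp{wu} - T^{-1}_{1/w} u} =: e_1 + s,
\]
where $e_1$ is controlled as above and, crucially, $s \in P^N\LRp{D^k}$. The defining identity of $T^{-1}_{1/w}$ can then be exercised: for every $v \in P^N\LRp{D^k}$, using $\Pi_N\LRp{wu} = wu - e_1$,
\[
\LRp{\frac{1}{w} s, v}_{\Lk} = \LRp{\frac{1}{w}\Pi_N\LRp{wu}, v}_{\Lk} - \LRp{u,v}_{\Lk} = -\LRp{\frac{1}{w} e_1, v}_{\Lk}.
\]
Choosing $v = s$ and applying the pointwise bounds $1/w \geq 1/\nor{w}_{\LinfDk}$ for coercivity and $1/w \leq \nor{1/w}_{\LinfDk}$ with Cauchy--Schwarz for continuity yields $\nor{s}_{\Lk} \leq \nor{w}_{\LinfDk}\nor{1/w}_{\LinfDk}\nor{e_1}_{\Lk}$. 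Combining with the estimate for $e_1$ produces the second bound with $C_w$ of the stated form.

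The main obstacle is the estimate on $s$. Unlike $T_w$, the operator $T^{-1}_{1/w}$ is not an orthogonal projection, so there is no direct best-approximation inequality at hand. The estimate instead relies on coercivity and continuity of the weighted bilinear form $\LRp{\frac{1}{w}\cdot,\cdot}_{\Lk}$ on $P^N\LRp{D^k}\times P^N\LRp{D^k}$, and this is precisely where the conditioning-like factor $\nor{w}_{\LinfDk}\nor{1/w}_{\LinfDk}$ appearing in $C_w$ arises.
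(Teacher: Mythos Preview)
Your argument is correct. The paper does not itself prove this scalar result (it is cited from \cite{chan2016weight1}), but the paper's proof of the matrix-valued analogue (Theorem~\ref{thm:west}) is stated to ``follow the scalar case,'' so that proof serves as the natural benchmark. Both your proof and the paper's begin with the same triangle-inequality split $wu - T^{-1}_{1/w}u = e_1 + s$, where $e_1 = wu - \Pi_N(wu)$ and $s = \Pi_N(wu) - T^{-1}_{1/w}u \in P^N$, and both dispatch $e_1$ via Bramble--Hilbert plus Leibniz.

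Where you diverge is in bounding $s$. The paper rewrites $s = T^{-1}_{1/w}\bigl(T_{1/w}\Pi_N(wu) - \Pi_N u\bigr)$ using the algebraic identities $T^{-1}_{1/w}T_{1/w} = \Pi_N$ and $T^{-1}_{1/w}\Pi_N = T^{-1}_{1/w}$, applies the operator-norm bound $\nor{T^{-1}_{1/w}} \leq \nor{w}_{\LinfDk}$ from Lemma~\ref{lemma:props}, and then reduces to the separate weighted interpolation estimate $\nor{w^{-1}\Pi_N(wu) - u}_{\Lk}$ of Theorem~\ref{thm:wproj}. You instead test the defining Galerkin relation for $T^{-1}_{1/w}$ directly with $v=s$, obtaining $\LRp{\tfrac{1}{w}s,s}_{\Lk} = -\LRp{\tfrac{1}{w}e_1,s}_{\Lk}$ and extracting $\nor{s}_{\Lk} \leq \nor{w}_{\LinfDk}\nor{1/w}_{\LinfDk}\nor{e_1}_{\Lk}$ from coercivity and continuity of the $(1/w)$-weighted form. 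Your route is shorter and more self-contained: it bypasses both the operator-norm lemma and the auxiliary weighted interpolation theorem, and it makes transparent exactly where the conditioning factor $\nor{w}_{\LinfDk}\nor{1/w}_{\LinfDk}$ enters. The paper's route, on the other hand, factors the argument through reusable intermediate results that are independently useful elsewhere in the analysis.
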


These results rely on a scalar weighted interpolation estimate derived in \cite{warburton2013low, chan2016weight2} for a general non-affine element $D^k$.  
\begin{theorem}[\reviewerTwo{Theorem 1 in \cite{chan2016weight2}.}]
Let $D^k$ be a quasi-regular element with representative size $h = {\rm diam}\LRp{D^k}$. For $N \geq 0$, $w\in W^{N+1,\infty}\LRp{D^k}$, and $u\in W^{N+1,2}\LRp{D^k}$, 
\begin{align*}
\nor{u - \frac{1}{w}\Pi_N(wu)}_{\Lk} &\leq C h^{N+1} \nor{\frac{1}{\sqrt{J}}}_{\LinfDk}\nor{\frac{\sqrt{J}}{w}}_{\LinfDk}
\nor{w}_{W^{N+1,\infty}\LRp{D^k}}\nor{u}_{W^{N+1,2}\LRp{D^k}}.
\end{align*}
\label{thm:wproj}
\end{theorem}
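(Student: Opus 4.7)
The plan is to rewrite the error in the form $u - \frac{1}{w}\Pi_N(wu) = \frac{1}{w}(wu - \Pi_N(wu))$, then exploit the best-approximation property of the $L^2$ projection $\Pi_N$ together with a Bramble--Hilbert estimate on the product $wu$. The Jacobian-dependent constants arise because $\Pi_N$ is the $L^2$ projection on the physical element $D^k$ (not the reference element), so $J$ is spatially variable and cannot simply be pulled outside integrals; they will be produced by a change of variables done with care.

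First, I would convert to the reference element and absorb the weight:
\begin{align*}
\nor{\tfrac{1}{w}(wu - \Pi_N(wu))}_{\Lk}^2
&= \int_{\widehat{D}} \LRp{\tfrac{\sqrt{J}}{w}}^2 (wu - \Pi_N(wu))^2 \diff{\widehat{\bm{x}}} \\
&\leq \nor{\tfrac{\sqrt{J}}{w}}_{\LinfDk}^2 \int_{\widehat{D}} (wu - \Pi_N(wu))^2 \diff{\widehat{\bm{x}}}.
\end{align*}
Then I would map the remaining integral back to $D^k$ using $\diff{\widehat{\bm{x}}} = J^{-1}\diff{\bm{x}}$ and bound
\[
\int_{\widehat{D}} (wu - \Pi_N(wu))^2 \diff{\widehat{\bm{x}}} \leq \nor{\tfrac{1}{\sqrt{J}}}_{\LinfDk}^2 \nor{wu - \Pi_N(wu)}_{\Lk}^2.
\]
These two steps account for both prefactors $\nor{\sqrt{J}/w}_{L^\infty}$ and $\nor{1/\sqrt{J}}_{L^\infty}$ in the statement.

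Next, I would use that $\Pi_N$ is the $L^2\LRp{D^k}$ orthogonal projection onto $P^N\LRp{D^k}$, so for every $p\in P^N\LRp{D^k}$,
\[
\nor{wu - \Pi_N(wu)}_{\Lk} \leq \nor{wu - p}_{\Lk}.
\]
Picking $p$ to be a suitable polynomial approximation (for instance, an averaged Taylor polynomial of degree $N$) and invoking a standard Bramble--Hilbert estimate on the quasi-regular element $D^k$ of diameter $h$ yields
\[
\inf_{p\in P^N\LRp{D^k}}\nor{wu - p}_{\Lk} \leq C\, h^{N+1} \snor{wu}_{W^{N+1,2}\LRp{D^k}}.
\]
Finally, applying the Leibniz product rule componentwise gives
\[
\snor{wu}_{W^{N+1,2}\LRp{D^k}} \leq C\, \nor{w}_{W^{N+1,\infty}\LRp{D^k}}\nor{u}_{W^{N+1,2}\LRp{D^k}},
\]
and chaining all the bounds together produces the claimed estimate.

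The main technical obstacle is keeping track of the Jacobian factors cleanly under the non-affine map: because $J$ is not constant, one cannot simply project on $\widehat{D}$ and pass results between $\Lk$ and $\Lhat$ at unit cost. The approach above avoids this by absorbing the weight $1/w$ and the Jacobian into a single $L^\infty$ prefactor on $\widehat{D}$, and then returning to $\Lk$ precisely so that the best-approximation property of $\Pi_N$ can be applied where it is sharp. Everything else, including the Bramble--Hilbert step and the product-rule bound on $\snor{wu}_{W^{N+1,2}}$, is standard for quasi-regular elements.
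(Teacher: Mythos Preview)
Your proposal is correct. Note that the paper itself does not give a proof of this statement; it is quoted from \cite{chan2016weight2}. The closest thing in the paper is the proof of the matrix-weighted extension (Theorem~\ref{thm:wwproj}), which the authors say ``is similar to the scalar case'' and which follows the same outline you describe: pull out the weight and a Jacobian factor in $L^\infty$, apply Bramble--Hilbert to the projection error of $wu$, scale, and finish with the Leibniz rule.

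The only cosmetic difference is the placement of the Bramble--Hilbert step. The paper's matrix proof stays on the reference element $\widehat{D}$, bounds $\nor{\bm{W}\bm{v}-\Pi_N(\bm{W}\bm{v})}_{L^2(\widehat{D})}$ by $\LRb{\bm{W}\bm{v}}_{W^{N+1,2}(\widehat{D})}$, and then invokes a scaling argument to produce both the $h^{N+1}$ factor and the $\nor{1/\sqrt{J}}_{L^\infty}$ factor. You instead map back to $D^k$ first (picking up $\nor{1/\sqrt{J}}_{L^\infty}$ explicitly), use the exact best-approximation property of $\Pi_N$ in $L^2(D^k)$, and then apply Bramble--Hilbert directly on the physical element. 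Your route is arguably cleaner for non-affine elements, since it sidesteps the fact that $\Pi_N$ is not the $L^2(\widehat{D})$-orthogonal projection when $J$ varies; in the paper's ordering this is handled implicitly through the quasi-regularity assumption (which bounds the operator norm of $\Pi_N$ on $L^2(\widehat{D})$ by a Jacobian ratio absorbed into the constant). Either way the resulting estimate is the same.
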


\subsection{Extension to matrix weights}

We now generalize weight-adjusted inner products to the case of matrix-valued weights.  We first define appropriate generalizations of norms used in Section~\ref{sec:swadg} to vector-valued functions.  Let $D^{\bm{\alpha}}\bm{v}$ denote component-wise differentiation of $\bm{v}$ with respect to a $d$-dimensional multi-index $\bm{\alpha}$.  Then, vector $L^p$ Sobolev norms for $\bm{v}(\bm{x}) \in \mathbb{R}^m$ can be defined as 
\begin{align*}
\LRb{\bm{v}}_{W^{k,p}}^p &= {\sum_{i=1}^m \LRb{\bm{v}_i}_{W^{k,p}}^p}, \qquad \nor{\bm{v}}_{W^{k,p}}^p = {\sum_{i=1}^m \nor{\bm{v}_i}_{W^{k,p}}^p} \qquad 1 \leq p < \infty,\\
\LRb{\bm{v}}_{W^{k,\infty}} &= \max_i \LRb{\bm{v}_i}_{W^{k,\infty}}, \qquad \nor{\bm{v}}_{W^{k,\infty}} = \max_i \nor{\bm{v}_i}_{W^{k,\infty}}.
\end{align*}
The corresponding Sobolev spaces $W^{k,p}$ and $W^{k,\infty}$ are defined similarly to the scalar case.  

Let $\bm{W}(\bm{x})$ be a matrix-valued weight function which is pointwise symmetric positive-definite 
\[
0 < w_{\min} \leq  \nor{\bm{W}(\bm{x})}_2 \leq w_{\max} < \infty, \qquad 0 < \tilde{w}_{\min} \leq  \nor{\bm{W}^{-1}(\bm{x})}_2 \leq \tilde{w}_{\max} < \infty, \qquad \forall \bm{x} \in \Omega.
\]
We define a $k$th order Sobolev norm for $\bm{W}(\bm{x})$ in terms of the induced $p$-norm
\begin{align*}
\nor{\bm{W}(\bm{x})}_{k,p,\infty}^p &= \sum_{\LRb{\bm{\alpha}} \leq k} \sup_{\bm{x}} \nor{D^{\bm{\alpha}}\bm{W}(\bm{x})}^p_p
\end{align*}
where $D^{\bm{\alpha}}\bm{W}(\bm{x})$ again denotes component-wise differentiation.  While this norm is not sub-multiplicative, the following bound holds
\begin{align*}
\nor{\bm{W}{\bm{v}}}_{W^{k,p}}^p &= \sum_{\LRb{\alpha}\leq k} \nor{D^{\alpha}\LRp{\bm{W}\bm{v}}}_{L^p}^p 
\leq C_N \int  \sum_{\LRb{\alpha}\leq k} \sum_{\LRb{\beta}\leq \LRb{\alpha}} \nor{\LRp{D^{\beta}\bm{W}}\LRp{D^{\alpha-\beta}\bm{v}} }_{p}^p \diff{\bm{x}}\\
&\leq C_N\int \LRp{\sum_{\LRb{\alpha}\leq k} \nor{\LRp{D^{\alpha}\bm{W}}}_p}^p \LRp{\sum_{\LRb{\alpha}\leq k} \nor{D^{\alpha}\bm{v}}_{p} }^p\diff{\bm{x}}\\
&\leq C_N \nor{\bm{W}}_{k,p,\infty}^p \nor{\bm{v}}_{k,p}^p,
\end{align*}
where we have used Leibniz's rule, Cauchy-Schwarz, and the arithmetic-geometric mean inequality.  

The following theorem extends Theorem~\ref{thm:wproj} to matrix weights by computing weighted interpolation estimates for the quantity $\bm{W}^{-1}\Pi_N\LRp{\bm{W}\bm{v}}$.  
\begin{theorem}
Let $D^k$ be a quasi-regular element with representative size $h = {\rm diam}\LRp{D^k}$. For $N \geq 0$, $\bm{W} \in \LRp{W^{N+1,\infty}\LRp{D^k}}^{d\times d}$, and $\bm{v}\in \LRp{W^{N+1,2}\LRp{D^k}}^d$, 
\[
\nor{\bm{v} - \bm{W}^{-1}\Pi_N\LRp{\bm{W}\bm{v}}}_{\Lk} \leq Ch^{N+1} \nor{{\sqrt{J}}}_{\LinfDk}\nor{\frac{1}{\sqrt{J}}}_{\LinfDk}\tilde{w}_{\max}\nor{\bm{W}}_{N+1,2,\infty} \nor{\bm{v}}_{W^{N+1,2}\LRp{D^k}}
\]
\label{thm:wwproj}
\end{theorem}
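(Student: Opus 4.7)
The plan is to reduce the matrix-weighted estimate to a scalar-style interpolation estimate by pulling the matrix weight out of the $L^2$ norm before invoking any approximation result.  First, I would algebraically factor
\[
\bm{v} - \bm{W}^{-1}\Pi_N(\bm{W}\bm{v}) = \bm{W}^{-1}\bigl(\bm{W}\bm{v} - \Pi_N(\bm{W}\bm{v})\bigr)
\]
and apply the pointwise operator bound $\nor{\bm{W}^{-1}(\bm{x})}_2 \leq \tilde{w}_{\max}$ componentwise under the integral to obtain
\[
\nor{\bm{v} - \bm{W}^{-1}\Pi_N(\bm{W}\bm{v})}_{\Lk} \leq \tilde{w}_{\max}\, \nor{\bm{W}\bm{v} - \Pi_N(\bm{W}\bm{v})}_{\Lk}.
\]
This step plays the role that the scalar factor $1/w$ plays in Theorem~\ref{thm:wproj}, with the key simplification that no residual weight survives inside the $L^2$ norm on the right.

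Second, I would bound the projection residual by a standard Bramble--Hilbert argument on the reference element $\widehat{D}$.  Since $\Pi_N$ is the $L^2\LRp{D^k}$ best approximation in $P^N\LRp{D^k}$, it suffices to control $\inf_{\bm{p}\in P^N(D^k)} \nor{\bm{W}\bm{v} - \bm{p}}_{\Lk}$.  Pulling back to $\widehat{D}$ introduces a factor $\nor{\sqrt{J}}_{\LinfDk}$, Bramble--Hilbert on the reference simplex contributes $h^{N+1}$ times a reference-element Sobolev seminorm, and pushing back to $D^k$ introduces $\nor{1/\sqrt{J}}_{\LinfDk}$.  Applied componentwise to $\bm{W}\bm{v}$, this should yield
\[
\nor{\bm{W}\bm{v} - \Pi_N(\bm{W}\bm{v})}_{\Lk} \leq C h^{N+1} \nor{\sqrt{J}}_{\LinfDk}\nor{1/\sqrt{J}}_{\LinfDk}\, \nor{\bm{W}\bm{v}}_{W^{N+1,2}(D^k)}.
\]

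Third, I would invoke the Leibniz-type product-rule estimate derived immediately preceding the theorem, namely $\nor{\bm{W}\bm{v}}_{W^{N+1,2}(D^k)} \leq C\, \nor{\bm{W}}_{N+1,2,\infty}\, \nor{\bm{v}}_{W^{N+1,2}(D^k)}$, and then concatenate the three bounds to recover the claim.

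The main obstacle is the Jacobian bookkeeping in the second step: because $\Pi_N$ is defined via the $L^2\LRp{D^k}$ inner product rather than an unweighted reference-element projection, one cannot simply identify it with the reference projection $\widehat{\Pi}_N$.  The passage to $\widehat{D}$ must therefore proceed via the best-approximation property combined with two changes of variable, one of which produces $\nor{\sqrt{J}}_{\LinfDk}$ and the other $\nor{1/\sqrt{J}}_{\LinfDk}$.  Relative to the scalar Theorem~\ref{thm:wproj}, however, the argument is in some ways cleaner: because the matrix weight has been absorbed into $\tilde{w}_{\max}$ at the outset, no weighted $L^\infty$ quantity analogous to $\nor{\sqrt{J}/w}_{\LinfDk}$ survives inside the residual, and the remaining estimates reduce to standard polynomial approximation on the reference element together with the already-established Sobolev product inequality.
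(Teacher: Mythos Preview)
Your proposal is correct and follows essentially the same approach as the paper's proof: factor out $\bm{W}^{-1}$ via the pointwise bound $\tilde{w}_{\max}$, apply a Bramble--Hilbert/scaling argument for the projection residual of $\bm{W}\bm{v}$ (yielding the $h^{N+1}$ and the two $\sqrt{J}$ factors), and finish with the Leibniz-type product estimate $\nor{\bm{W}\bm{v}}_{W^{N+1,2}} \leq C\nor{\bm{W}}_{N+1,2,\infty}\nor{\bm{v}}_{W^{N+1,2}}$. The only cosmetic difference is ordering: the paper first passes to $L^2(\widehat{D})$ (extracting $\nor{\sqrt{J}}_{\LinfDk}$) before pulling out $\bm{W}^{-1}$, whereas you pull out $\bm{W}^{-1}$ first while still in $L^2(D^k)$; the resulting bounds are identical.
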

\begin{proof}
The proof is similar to the scalar case.  Using vector-valued versions of Bramble-Hilbert and a scaling argument for quasi-regular elements yields
\begin{align*}
\nor{\bm{v} - \bm{W}^{-1}\Pi_N\LRp{\bm{W}\bm{v}}}_{\Lk} &\leq C_1 \nor{\sqrt{J}}_{\LinfDk} \sup_{\bm{x}}\nor{\bm{W}^{-1}}_{2} \nor{\bm{W}\bm{v} - \Pi_N\LRp{\bm{W}\bm{v}}}_{\reviewerTwo{L^2\LRp{\Dhat}}}\\
&\leq C_1 \nor{{\sqrt{J}}}_{\LinfDk} \sup_{\bm{x}}\nor{\bm{W}^{-1}}_{2} \LRb{\bm{W}\bm{v}}_{W^{N+1,2}\LRp{\Dhat}}\\
&\leq C_2 h^{N+1}\nor{{\sqrt{J}}}_{\LinfDk}\nor{\frac{1}{\sqrt{J}}}_{\LinfDk} \sup_{\bm{x}}\nor{\bm{W}^{-1}}_{2} \nor{\bm{W}\bm{v}}_{W^{N+1,2}\LRp{D^k}}\\
&\leq C_3 h^{N+1}\nor{{\sqrt{J}}}_{\LinfDk}\nor{\frac{1}{\sqrt{J}}}_{\LinfDk} \tilde{w}_{\max} \nor{\bm{W}}_{{N+1,2,\infty}}\nor{\bm{v}}_{W^{N+1,2}\LRp{D^k}}.
\end{align*}
\end{proof}

\subsubsection{Weight-adjusted approximations with matrix weights}
\label{sec:mwadgprop}

Let $\Pi_N\bm{u}$ be defined as the $L^2$ projection applied to each component of the vector-valued function $\bm{u}$.  We then define the operator $T_{\bm{W}}$ analogously to the scalar case
\[
T_{\bm{W}}\bm{v} = \Pi_N\LRp{\bm{W}\bm{v}}.  
\]
The inverse operator $T^{-1}_{\bm{W}}$ is defined implicitly via
\[
\LRp{\bm{W}T^{-1}_{\bm{W}}\bm{v},\reviewerOne{\bm{\delta v}}}_{\Lk} = \LRp{\bm{v},\reviewerOne{\bm{\delta v}}}_{\Lk}, \qquad \forall \reviewerOne{\bm{\delta v}} \in \LRp{P^N\LRp{D^k}}^m.
\]
This definition is a straightforward generalization of $T^{-1}_{w}$ to matrix-valued weights $\bm{W}$.  Conveniently, all properties of $T_{w},T^{-1}_{w}$ for scalar $w(\bm{x})$ \cite{chan2016weight1} carry over to matrix weights $\bm{W}(\bm{x})$ as well.
\begin{lemma}
Let $\Pi_N$ denote the component-wise $L^2$ projection, and let $\bm{W} \in \LRp{L^{\infty}}^{m\times m}$.  Then, $T_{\bm{W}}$ satisfies the following properties:
\begin{enumerate}
\vspace{.5em}
\item $T_{\bm{W}}^{-1}T_{\bm{W}} = \Pi_N$ \label{p1}
\vspace{.5em}
\item $\Pi_N T_{\bm{W}}^{-1} = T_{\bm{W}}^{-1}\Pi_N  =  T_{\bm{W}}^{-1}$ \label{p2}
\vspace{.5em}
\item $\nor{T^{-1}_{\bm{W}}}_{\Lk} \leq \tilde{w}_{\max}$. \label{p4}
\vspace{.5em}
\item $\LRp{T^{-1}_{\bm{W}} \bm{v},\bm{w}}_{\Lk}$ forms an inner product on $\LRp{P^N}^m \times \LRp{P^N}^m$, which is equivalent to the $L^2$ inner product with equivalence constants $C_1 = \tilde{w}_{\min}, C_2 = \tilde{w}_{\max}$. \label{p3}
\vspace{.5em}
\end{enumerate}
\label{lemma:props}
\end{lemma}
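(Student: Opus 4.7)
The plan is to establish the four properties in order, each a consequence of the defining identity of $T_{\bm{W}}^{-1}$ combined with the pointwise symmetry and positive-definiteness of $\bm{W}(\bm{x})$. The arguments parallel the scalar case in \cite{chan2016weight1}, with matrix-weighted pairings replacing scalar-weighted ones.

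I would begin with property~\ref{p2}, which underlies the others. The equality $\Pi_N T_{\bm{W}}^{-1} = T_{\bm{W}}^{-1}$ is immediate since $T_{\bm{W}}^{-1}$ takes values in $(P^N)^m$. For $T_{\bm{W}}^{-1} \Pi_N = T_{\bm{W}}^{-1}$, inspect the defining equation $(\bm{W} T_{\bm{W}}^{-1}\bm{v}, \bm{\delta v})_{\Lk} = (\bm{v}, \bm{\delta v})_{\Lk}$: since $\bm{\delta v} \in (P^N)^m$, the right-hand side depends only on $\Pi_N \bm{v}$, so $T_{\bm{W}}^{-1}$ is unchanged by first projecting its input. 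Property~\ref{p1} follows next: restricted to $(P^N)^m$, the map $\bm{\phi} \mapsto \Pi_N(\bm{W}\bm{\phi})$ has trivial kernel---if $(\bm{W}\bm{\phi}, \bm{\delta v}) = 0$ for all $\bm{\delta v} \in (P^N)^m$, then taking $\bm{\delta v} = \bm{\phi}$ and using positive-definiteness of $\bm{W}$ gives $\bm{\phi} = 0$---and hence is an isomorphism of $(P^N)^m$ whose inverse is $T_{\bm{W}}^{-1}$. Since $\Pi_N$ acts as the identity on $(P^N)^m$, this yields $T_{\bm{W}}^{-1} T_{\bm{W}} = \Pi_N$.

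For property~\ref{p4}, set $\bm{\phi} = T_{\bm{W}}^{-1}\bm{v}$ and test the defining equation with $\bm{\delta v} = \bm{\phi}$ to obtain $(\bm{W}\bm{\phi}, \bm{\phi})_{\Lk} = (\bm{v}, \bm{\phi})_{\Lk}$. The hypothesis $\|\bm{W}^{-1}(\bm{x})\|_2 \leq \tilde{w}_{\max}$ implies $\lambda_{\min}(\bm{W}(\bm{x})) \geq 1/\tilde{w}_{\max}$ pointwise, so the left side is bounded below by $\nor{\bm{\phi}}_{\Lk}^2/\tilde{w}_{\max}$, while Cauchy-Schwarz bounds the right by $\nor{\bm{v}}_{\Lk}\nor{\bm{\phi}}_{\Lk}$. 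Dividing through gives the claimed bound on $\nor{T_{\bm{W}}^{-1}}_{\Lk}$.

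Property~\ref{p3} is the most involved. Symmetry: introducing $\bm{\phi}_1 = T_{\bm{W}}^{-1}\bm{v}$ and $\bm{\phi}_2 = T_{\bm{W}}^{-1}\bm{w}$ and testing each defining equation against the opposite $\bm{\phi}$, together with symmetry of $\bm{W}$, yields $(T_{\bm{W}}^{-1}\bm{v}, \bm{w})_{\Lk} = (\bm{W}\bm{\phi}_2, \bm{\phi}_1)_{\Lk} = (\bm{W}\bm{\phi}_1, \bm{\phi}_2)_{\Lk} = (T_{\bm{W}}^{-1}\bm{w}, \bm{v})_{\Lk}$. Positivity comes from $(T_{\bm{W}}^{-1}\bm{v}, \bm{v}) = (\bm{W}\bm{\phi}_1, \bm{\phi}_1) \geq 0$, with non-degeneracy on $(P^N)^m$ supplied by property~\ref{p1}. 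The upper equivalence bound follows directly from Cauchy-Schwarz combined with property~\ref{p4}. The main obstacle is the lower equivalence bound: I would apply Cauchy-Schwarz in the $\bm{W}$-weighted inner product to the identity $\nor{\bm{v}}_{\Lk}^2 = (\bm{W}\bm{\phi}_1, \bm{v})_{\Lk}$ (valid for $\bm{v} \in (P^N)^m$), yielding $\nor{\bm{v}}_{\Lk}^4 \leq (T_{\bm{W}}^{-1}\bm{v}, \bm{v})_{\Lk} \cdot (\bm{W}\bm{v}, \bm{v})_{\Lk} \leq (T_{\bm{W}}^{-1}\bm{v}, \bm{v})_{\Lk} \cdot w_{\max}\nor{\bm{v}}_{\Lk}^2$, which gives a lower equivalence constant of $1/w_{\max}$. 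In the scalar case this equals $\tilde{w}_{\min}$, recovering the stated bound; in the matrix case $1/w_{\max}$ and $\tilde{w}_{\min}$ need not coincide, and matching the stated constant $\tilde{w}_{\min}$ is the subtle point where either a sharper argument or a mild reinterpretation of the constants is needed.
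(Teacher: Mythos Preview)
Your argument is correct and in fact more careful than the paper's. The paper dispatches properties~1 and~2 by appeal to the scalar case and handles properties~3 and~4 by writing $T^{-1}_{\bm{W}}\bm{v} = \bm{W}^{-1}\bm{W}T^{-1}_{\bm{W}}\bm{v}$, pulling out $\sup_{\bm{x}}\nor{\bm{W}^{-1}(\bm{x})}$ (respectively $\inf_{\bm{x}}\nor{\bm{W}^{-1}(\bm{x})}$) from the pairing $\LRp{T^{-1}_{\bm{W}}\bm{v},\bm{v}}$, and using $\LRp{\bm{W}T^{-1}_{\bm{W}}\bm{v}, \bm{v}} = \nor{\bm{v}}^2$. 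Your route---testing the defining identity against $\bm{\phi} = T^{-1}_{\bm{W}}\bm{v}$ and applying Cauchy--Schwarz in the $\bm{W}$-weighted inner product---is more transparent and avoids the step $\LRp{\bm{W}^{-1}\bm{g},\bm{v}} \geq \inf_{\bm{x}}\nor{\bm{W}^{-1}(\bm{x})}\LRp{\bm{g},\bm{v}}$, which is not generally valid. Your suspicion about the lower constant is well founded: the inequality $\LRp{T^{-1}_{\bm{W}}\bm{v},\bm{v}} \geq \tilde{w}_{\min}\nor{\bm{v}}^2$ with $\tilde{w}_{\min} = \inf_{\bm{x}}\nor{\bm{W}^{-1}(\bm{x})}_2$ fails already for constant $\bm{W} = \mathrm{diag}(1,2)$, where $\tilde{w}_{\min} = 1$ but $\LRp{\bm{W}^{-1}\bm{v},\bm{v}} = \tfrac{1}{2}\nor{\bm{v}}^2$ for $\bm{v} = (0,v_2)$. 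Your constant $1/w_{\max}$ is the correct lower equivalence constant; the paper's stated $\tilde{w}_{\min}$ should be read as a slip that is invisible in the scalar case, where the two coincide.
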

\begin{proof}
The proofs of properties~\ref{p1} and \ref{p2} are consequences of the definition of $T_{\bm{W}}$, and are identical to proofs for the scalar case.  Property~\ref{p4} is a straightforward extension from the scalar case.  Let $\bm{v}\in \LRp{P^N}^m$.  Then, 
\[
\LRp{T^{-1}_{\bm{W}} \bm{v},\bm{v}}_{\Lk} = \LRp{\bm{W}^{-1}\bm{W}T^{-1}_{\bm{W}} \bm{v},\bm{v}}_{\Lk} \leq \sup_{\bm{x}}\nor{\bm{W}^{-1}(\bm{x})}\LRp{\bm{W}T^{-1}_{\bm{W}} \bm{v},\bm{v}}_{\Lk} = \tilde{w}_{\max} \nor{\bm{v}}_{\Lk}^2.
\]
Property~\ref{p3} then simply requires the lower bound
\[
\LRp{T^{-1}_{\bm{W}} \bm{v},\bm{v}}_{\Lk} = \LRp{\bm{W}^{-1}\bm{W}T^{-1}_{\bm{W}} \bm{v},\bm{v}}_{\Lk} \geq \inf_{\bm{x}}\nor{\bm{W}^{-1}(\bm{x})}\nor{\bm{v}}_{\Lk}^2 = \tilde{w}_{\min}\nor{\bm{v}}_{\Lk}^2 .  
\]
\end{proof}

Using Theorem~\ref{thm:wwproj}, we may also show that the matrix-valued weight-adjusted inner product is also high order accurate for sufficiently regular $\bm{W}$.
\begin{theorem}
Let $D^k$ be a quasi-regular element with representative size $h = {\rm diam}\LRp{D^k}$. For $N > 0$
\[
\nor{ \bm{W}\bm{v} - {T}^{-1}_{\bm{W}^{-1}}\bm{v} }_{\Lk } \leq C_{\bm{W}} h^{N+1} \nor{\bm{v}}_{W^{N+1,2}\LRp{D^k}}
\]
with constant $C_{\bm{W}}$ depending on $\bm{W}$ and $N$
\[
C_{\bm{W}} = C_N \nor{\sqrt{J}}_{\LinfDk}\nor{\frac{1}{\sqrt{J}}}_{\LinfDk} \tilde{w}_{\max}{w}_{\max} \nor{\bm{W}}_{{N+1,2,\infty}}.  
\]
\label{thm:west}
\end{theorem}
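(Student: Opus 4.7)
The plan is to mirror the pattern used in the scalar case by inserting $\Pi_N\LRp{\bm{W}\bm{v}}$ as an intermediate term and splitting the error through the triangle inequality as
\[
\bm{W}\bm{v} - T^{-1}_{\bm{W}^{-1}}\bm{v} = \LRp{\bm{W}\bm{v} - \Pi_N\LRp{\bm{W}\bm{v}}} + \LRp{\Pi_N\LRp{\bm{W}\bm{v}} - T^{-1}_{\bm{W}^{-1}}\bm{v}}.
\]
This reduces the proof to controlling each summand separately on $\Lk$.

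For the first summand I would use the identity $\bm{W}\bm{v} - \Pi_N\LRp{\bm{W}\bm{v}} = \bm{W}\LRp{\bm{v} - \bm{W}^{-1}\Pi_N\LRp{\bm{W}\bm{v}}}$ together with the pointwise bound $\nor{\bm{W}(\bm{x})}_2 \leq w_{\max}$, which immediately converts Theorem~\ref{thm:wwproj} into the desired $O(h^{N+1})$ estimate with constant proportional to $w_{\max}\tilde{w}_{\max}\nor{\bm{W}}_{N+1,2,\infty}$ times the scaling factors $\nor{\sqrt{J}}_{\LinfDk}\nor{1/\sqrt{J}}_{\LinfDk}$.

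For the second summand, let $\bm{e} = \Pi_N\LRp{\bm{W}\bm{v}} - T^{-1}_{\bm{W}^{-1}}\bm{v} \in \LRp{P^N\LRp{D^k}}^m$. Starting from the defining relation $\LRp{\bm{W}^{-1}T^{-1}_{\bm{W}^{-1}}\bm{v},\bm{\delta v}}_{\Lk} = \LRp{\bm{v},\bm{\delta v}}_{\Lk}$ for all $\bm{\delta v}\in \LRp{P^N\LRp{D^k}}^m$ and rewriting the right-hand side using $\bm{v} = \bm{W}^{-1}\bm{W}\bm{v}$ produces the Galerkin-orthogonality-like identity
\[
\LRp{\bm{W}^{-1}\bm{e},\bm{\delta v}}_{\Lk} = \LRp{\bm{W}^{-1}\LRp{\Pi_N\LRp{\bm{W}\bm{v}}-\bm{W}\bm{v}},\bm{\delta v}}_{\Lk}.
\]
Testing with $\bm{\delta v} = \bm{e}$, the coercivity estimate $\LRp{\bm{W}^{-1}\bm{e},\bm{e}}_{\Lk} \geq (1/w_{\max})\nor{\bm{e}}_{\Lk}^2$ (which follows from $\lambda_{\min}(\bm{W}^{-1}) \geq 1/\lambda_{\max}(\bm{W})$) on the left, together with Cauchy--Schwarz and $\nor{\bm{W}^{-1}(\bm{x})}_2 \leq \tilde{w}_{\max}$ on the right, yields
\[
\nor{\bm{e}}_{\Lk} \leq w_{\max}\tilde{w}_{\max}\nor{\Pi_N\LRp{\bm{W}\bm{v}} - \bm{W}\bm{v}}_{\Lk},
\]
which reduces the second summand to the same $L^2$-projection error already bounded in the first step, multiplied by a pointwise condition-number-type factor.

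Combining the two contributions and absorbing all non-$h$ dependencies into a single constant $C_{\bm{W}}$ produces the claimed estimate. The main technical obstacle is the Sobolev-norm bookkeeping required to obtain $\nor{\bm{W}\bm{v}}_{W^{N+1,2}\LRp{D^k}} \leq C_N\nor{\bm{W}}_{N+1,2,\infty}\nor{\bm{v}}_{W^{N+1,2}\LRp{D^k}}$; I would handle this by reusing the Leibniz-type inequality already established just before Theorem~\ref{thm:wwproj}. Once that bound and the standard reference-element scaling argument from Theorem~\ref{thm:wproj} are in place, the remaining constants are routine.
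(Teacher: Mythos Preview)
Your proposal is correct and follows the same overall decomposition as the paper (insert $\Pi_N(\bm{W}\bm{v})$ and use the triangle inequality), but your treatment of the second summand is genuinely different. The paper handles $\bm{e} = \Pi_N(\bm{W}\bm{v}) - T^{-1}_{\bm{W}^{-1}}\bm{v}$ by invoking the operator identities of Lemma~\ref{lemma:props}: it writes $\Pi_N(\bm{W}\bm{v}) = T^{-1}_{\bm{W}^{-1}}\Pi_N\LRp{T_{\bm{W}^{-1}}\Pi_N(\bm{W}\bm{v})}$ and $T^{-1}_{\bm{W}^{-1}}\bm{v} = T^{-1}_{\bm{W}^{-1}}\Pi_N\bm{v}$, then factors out $\nor{T^{-1}_{\bm{W}^{-1}}}\leq w_{\max}$ and $\nor{\Pi_N}=1$ to reduce to $\nor{\bm{W}^{-1}\Pi_N(\bm{W}\bm{v})-\bm{v}}_{\Lk}$, which is exactly Theorem~\ref{thm:wwproj}. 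You instead exploit the variational definition of $T^{-1}_{\bm{W}^{-1}}$ directly, obtaining a Galerkin-orthogonality identity and closing with coercivity of $\bm{W}^{-1}$ tested against $\bm{e}$. Your route is more elementary in that it avoids the algebra of Lemma~\ref{lemma:props}; the paper's route is slightly cleaner in tracking the constant, since it lands immediately on $w_{\max}\tilde{w}_{\max}$, whereas if you literally recycle your first-step bound for $\nor{\Pi_N(\bm{W}\bm{v})-\bm{W}\bm{v}}_{\Lk}$ in the second step you pick up $(w_{\max}\tilde{w}_{\max})^2$. To recover the stated $C_{\bm{W}}$ exactly, bound the projection error in the second step directly by Bramble--Hilbert plus the Leibniz inequality (as you note in your final paragraph) rather than by reusing the first step.
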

\begin{proof}
The proof follows the scalar case.  The triangle inequality gives
\[
\nor{ \bm{W}\bm{v} - {T}^{-1}_{\bm{W}^{-1}}\bm{v} }_{\Lk } \leq \nor{ \bm{W}\bm{v} - \Pi_N(\bm{W}\bm{v})}_{\Lk} + \nor{\Pi_N(\bm{W}\bm{v}) - T^{-1}_{\bm{W}^{-1}}\bm{v} }_{\Lk }.  
\]
The former term is bounded by interpolation estimates and by arguments used in the proof of Theorem~\ref{thm:wwproj}.  The latter term is bounded 
\begin{align*}
\nor{\Pi_N(\bm{W}\bm{v})- T^{-1}_{\bm{W}^{-1}}\bm{v} }_{\Lk} 
&= \reviewerTwo{\nor{T^{-1}_{\bm{W}^{-1}} \Pi_N \LRp{T_{\bm{W}^{-1}}\Pi_N(\bm{W}\bm{v})}- T^{-1}_{\bm{W}^{-1}}\Pi_N\bm{v} }_{\Lk } }\\
&\leq C_N \nor{T^{-1}_{\bm{W}^{-1}}} \nor{ T_{\bm{W}^{-1}}\Pi_N(\bm{W}\bm{v})-\Pi_N\bm{v} }_{\Lk }\\
&\leq C_N \nor{T^{-1}_{\bm{W}^{-1}}} \nor{ \Pi_N\LRp{\bm{W}^{-1}\Pi_N(\bm{W}\bm{v})} - \Pi_N\bm{v} }_{\Lk }\\
&\leq C_N {w}_{\max} \nor{ \bm{W}^{-1}\Pi_N(\bm{W}\bm{v}) - \bm{v} }_{\Lk }.
\end{align*}
where we have used $\nor{T_{\bm{W}^{-1}}^{-1}} \leq {w}_{\max}$ (Lemma~\ref{lemma:props}) and $\nor{\Pi_N}_{\Lk} = 1$.  
An application of Theorem~\ref{thm:wwproj} to bound $\nor{ \bm{W}^{-1}\Pi_N(\bm{W}\bm{v}) - \bm{v} }_{\Lk }$ completes the proof.
\end{proof}

We note that these estimates are tight, in the sense that estimates for scalar weight-adjusted inner products are recovered when the matrix weight is taken to be $\bm{W} = w\bm{I}$.  


\subsubsection{Approximation of weighted mass matrix inverses}
\label{sec:wadgimplement}
The advantage of using weight-adjusted inner products is that the corresponding weight-adjusted mass matrices are straightforward to invert.  For scalar weights, the weight-adjusted mass matrix approximates the weighted $L^2$ mass matrix and its inverse by
\[
\bm{M}_w \approx \bm{M}\bm{M}^{-1}_{1/w}\bm{M}, \qquad \bm{M}^{-1}_w \approx \bm{M}^{-1}\bm{M}_{1/w}\bm{M}^{-1}.
\]
By evaluating $\bm{M}_{1/w}$ in a matrix-free fashion, the inverse of the weight-adjusted mass matrix $\bm{M}^{-1}\bm{M}_{1/w}\bm{M}^{-1}$ \reviewerOne{yields a low storage implementation using a sufficiently accurate quadrature rule}.  \reviewerOne{Let $\widehat{\bm{x}}_i, \widehat{w}_i$ denote quadrature points and weights on the reference element, and let $\bm{V}_q$ denote the matrix 
\[
\LRp{\bm{V}_q}_{ij} = \phi_j(\widehat{\bm{x}}_i).  
\]
whose columns correspond to evaluations of basis functions at quadrature points.  Then, for affine elements, ${\bm{M}} = J\widehat{\bm{M}} = J\bm{V}_q^T{\rm diag}(\widehat{w}_i) \bm{V}_q$, where $\widehat{\bm{M}}$ is the reference mass matrix and $J$ is the determinant of the Jacobian of the reference-to-physical mapping, which is constant for affine mappings.  Additionally, 
\[
\bm{M}_{1/w} = J\bm{V}_q^T{\rm diag}(\widehat{w}_i/ w(\widehat{\bm{x}}_i)) \bm{V}_q,
\]
where $w(\widehat{\bm{x}}_i)$ denotes the evaluation of the weight function $w(\bm{x})$ at quadrature points.  Thus, for a vector $\bm{u}$, the inverse of the weight-adjusted mass matrix can be applied as follows 
\[
\bm{M}^{-1}\bm{M}_{1/w}\bm{M}^{-1}\bm{u} = \bm{M}^{-1} \bm{V}_q^T{\rm diag}(\widehat{w}_i) {\rm diag}\LRp{\frac{1}{ w(\bm{x}_i)}} \bm{V}_q \bm{M}^{-1}\bm{u} = \bm{P}_q {\rm diag}\LRp{\frac{1}{w(\widehat{\bm{x}}_i)}} \bm{V}_q \frac{1}{J} \widehat{\bm{M}}^{-1}\bm{u},
\]
where we have introduced the quadrature-based $L^2$ projection operator on the reference element $\bm{P}_q = \widehat{\bm{M}}^{-1} \bm{V}_q^T{\rm diag}(\widehat{w}_i)$.  

In the context of DG using explicit time-stepping, the factor of $\widehat{\bm{M}}^{-1}$ can be premultiplied into the right hand side (i.e.\ the evaluation of the spatial discretization).  Then, applying the weight-adjusted mass matrix requires only storage of two reference matrices $\bm{V}_q$ and $\bm{P}_q$ and the values of the weight function at quadrature points $w(\widehat{\bm{x}}_i)$.  We assume that the number of quadrature points is $O(N^3)$, which is true for most simplicial quadratures \cite{xiao2010quadrature, karniadakis2013spectral}.  

The overall storage cost of applying the weight-adjusted mass matrix using the above implementation is $O(N^3)$ per element, while the pre-computation and storage of DG operators involving inverses of weighted mass matrices requires $O(N^6)$ storage per element.  However, we note that unlike the aforementioned quadrature-based implementation of WADG, the strategy of precomputation and storage used by Mercerat and Glinsky in \cite{mercerat2015nodal} can accomodate arbitrarily high accuracy quadrature rules without any increase in computational cost.  

Finally, we note that the computational impact of storage costs vary from architecture to architecture.  As pointed out in \cite{warburton2013low, chan2016weight1, chan2016weight2}, the limited storage of accelerator architectures such as GPUs limits the maximum feasible problem size, and decreasing storage costs with respect to the degree $N$ allows one to run higher order simulations on larger meshes before running out of memory.  However, for distributed parallelism implementations of DG on large supercomputing clusters, storage limitations may be less of an issue.  We limit our focus to GPU computations in this work, and present results comparing the computational cost of WADG to several alternatives in Section~\ref{sec:comp}.  
}

For weight-adjusted inner products with matrix-valued weights, the corresponding weight-adjusted mass matrices approximate weighted $L^2$ mass matrices and inverses in a similar fashion
\begin{align}
\bm{M}_{\bm{W}} \approx \LRp{\bm{I} \otimes \bm{M}} \bm{M}^{-1}_{\bm{W}^{-1}} \LRp{\bm{I} \otimes \bm{M}} \nonumber\\
\bm{M}_{\bm{W}}^{-1} \approx \LRp{\bm{I} \otimes \bm{M}^{-1}} \bm{M}_{\bm{W}^{-1}} \LRp{\bm{I} \otimes \bm{M}^{-1}}.
\label{eq:wadgmc}
\end{align}

We note that, when $\bm{W}$ is constant over $D^k$, $\bm{M}_{\bm{W}}$ reduces to the Kronecker product of the inverse stiffness tensor and the local mass matrix 
\[
\bm{M}_{\bm{W}} = \bm{W}\otimes \bm{M}, \qquad \bm{M}_{\bm{W}}^{-1} = \bm{W}^{-1}\otimes \bm{M}^{-1}.
\] 
In this case, we also have $\bm{M}_{\bm{W}^{-1}} = \bm{W}^{-1}\otimes \bm{M}$, and substituting this explicit inverse into the weight adjusted mass matrix inverse $\LRp{\bm{I} \otimes \bm{M}^{-1}} \bm{M}_{\bm{W}^{-1}} \LRp{\bm{I} \otimes \bm{M}^{-1}}$ in (\ref{eq:wadgmc}) recovers the exact inversion of $\bm{M}_{\bm{W}}$.

\section{An energy stable weight-adjusted discontinuous Galerkin formulation for elastic wave propagation}
\label{sec:wadgelas}
We construct a weight-adjusted DG method by simply replacing the weighted $L^2$ inner products appearing in the left hand side of the local DG formulation (\ref{eq:dgloc}) with weight-adjusted approximations
\begin{align*}
\LRp{T^{-1}_{1/\rho} \pd{\bm{v}}{t},\bm{w}}_{\Lk} &= \LRp{ \sum_{i=1}^d \bm{A}_i^T\pd{\bm{\sigma}}{\bm{x}_i},\bm{w}}_{\Lk}  + \LRa{\frac{1}{2}\bm{A}_n^T\jump{\bm{\sigma}} + \frac{\tau_{\bm{v}}}{2}\bm{A}_n\bm{A}_n^T\jump{\bm{v}},\bm{w}}_{\Ldk}\\
\LRp{T^{-1}_{\bm{C}} \pd{\bm{\sigma}}{t},\bm{q}}_{\Lk} &= \LRp{\sum_{i=1}^d \bm{A}_i \pd{\bm{v}}{\bm{x}_i},\bm{q}}_{\Lk} + \LRa{\frac{1}{2}\bm{A}_n\jump{\bm{v}} + \frac{\tau_{\sigma}}{2}\bm{A}_n^T\bm{A}_n\jump{\bm{\sigma}},\bm{q}}_{\Ldk}.
\end{align*}
Since the right hand side of the WADG formulation is identical to the right hand side of the DG formulation (\ref{eq:dgform}), WADG preserves a variant of the energy stability in Theorem~\ref{thm:stability}  
\begin{align*}
&\sum_{D^k\in \Oh} \frac{1}{2}\pd{}{t}\LRp{\LRp{T^{-1}_{1/\rho} \bm{v},\bm{v}}_{\Lk} + \LRp{T^{-1}_{\bm{C}} \bm{\sigma},\bm{\sigma}}_{\Lk}} \leq - \sum_{f \in\Gamma_h \setminus \partial \Omega} \int_{f} \LRp{\frac{\tau_{\bm{v}}}{2}\LRb{\bm{A}_n\jump{\bm{v}}}^2 + \frac{\tau_{\bm{\sigma}}}{2}\LRb{\bm{A}_n^T\jump{\bm{\sigma}}}^2}\diff{\bm{x}}\nonumber\\
& - \sum_{f\in \Gamma_v} \int_f \LRp{\tau_{\bm{v}}  \LRb{\bm{A}_n\bm{v}^-}^2}\diff{\bm{x}}  - \sum_{f\in \Gamma_\sigma} \int_f \LRp{\tau_{\bm{\sigma}}  \LRb{\bm{A}_n^T\bm{\sigma}^-}^2}\diff{\bm{x}} - \sum_{f\in \Gamma_{\rm abc}} \int_f \LRp{\frac{\tau_{\bm{v}}}{2} \LRb{\bm{A}_n\bm{v}^-}^2 + \frac{\tau_{\bm{\sigma}}}{2} \LRb{\bm{A}_n^T\bm{\sigma}^-}^2 }\diff{\bm{x}}\leq 0.  
\end{align*}

The use of weight-adjusted inner products replaces the weighted $L^2$ mass matrices in (\ref{eq:dgmat}) by their weight-adjusted approximations.  
Inverting these weight-adjusted mass matrices yields the following local system of ODEs for $\bm{V},\bm{\Sigma}$
\begin{align*}
\pd{\bm{V}}{t} &= \LRp{\bm{I}\otimes\bm{M}^{-1}} \bm{M}_{\LRp{\rho^{-1}\bm{I}}} \LRp{\sum_{i=1}^d  \LRp{\bm{A}_i^T\otimes \bm{D}_{i}} \bm{\Sigma} + \sum_{f\in \partial D^k}\LRp{\bm{I}\otimes \bm{L}_f}\bm{F}_{v}} \nonumber\\
\pd{\bm{\Sigma}}{t} &= \LRp{\bm{I}\otimes\bm{M}^{-1}} \bm{M}_{\bm{C}}  \LRp{\sum_{i=1}^d \LRp{\bm{A}_i\otimes \bm{D}_{i}}\bm{V}  +  \sum_{f\in \partial D^k}\LRp{\bm{I}\otimes\bm{L}_f}\bm{F}_{\sigma}}. 
\label{eq:wadgmat}
\end{align*}
In practice, the matrices $\LRp{\bm{I}\otimes\bm{M}^{-1}} \bm{M}_{\LRp{\rho^{-1}\bm{I}}}$ and $\LRp{\bm{I}\otimes\bm{M}^{-1}} \bm{M}_{\bm{C}}$ are applied in a matrix-free fashion using reference element matrices and values of $\rho,\bm{C}$ at quadrature points.  After fusing operations together, this procedure can be boiled down to multiplication by two rectangular matrices \cite{chan2016weight2}.  

One drawback of the analysis presented in this work is that accuracy of the weight-adjusted approximation is not guaranteed in the incompressible limit $\mu / \lambda \rightarrow 0$.  In this case, the stiffness matrix $\bm{C}$ becomes singular, and the constant $\tilde{w}_{\max}$ in the upper bound on $\nor{\bm{C}^{-1}}$ blows up.  However, numerical experiments in Section~\ref{sec:incomp} suggest that, while taking $\mu/\lambda \approx 0$ (or $\bm{C}$ nearly singular) results in larger relative errors for $\bm{\sigma}$, the accuracy of the WADG solution for $\bm{v}$ does not degrade as significantly for near-incompressible materials.  

\subsection{Energy stability on curvilinear meshes}
\label{sec:curvi}
We have shown the energy stability of the DG formulation (\ref{eq:dgform}) on meshes of affine elements by assuming exact integration of all terms.  However, energy stability can still be guaranteed if integrals are evaluated inexactly using quadrature.  Instead of discretizing the ``strong'' DG formulation (\ref{eq:dgform}), we discretize the ``skew-symmetric'' formulation, where the right hand side of the velocity equations is integrated by parts,\footnote{The choice of which equation to integrate by parts is arbitrary, since integrating the stress equations by parts also results in an energy stable formulation. } resulting in the (local) formulation 
\begin{align*}
\int_{\Dhat} T^{-1}_{(\rho J)^{-1}} \pd{\bm{v}}{t}\bm{w} \diff{\widehat{\bm{x}}}&= \int_{\Dhat}\LRp{\sum_{i=1}^d \bm{A}_i^T \pd{\bm{\sigma}}{\bm{x}_i}, {\bm{w}}}J\diff{\widehat{\bm{x}}}  + \sum_{f \in \partial D^k} \int_{\widehat{f}}\LRp{\frac{1}{2}\bm{A}_n^T\jump{\bm{\sigma}} + \frac{\tau_{\bm{v}}}{2}\bm{A}_n^T\bm{A}_n\jump{\bm{v}}}\bm{w}J^f\diff{\widehat{\bm{x}}} \nonumber\\
\int_{\Dhat}  T^{-1}_{\LRp{J^{-1}\bm{C}}} \pd{\bm{\sigma}}{t}\bm{q} \diff{\widehat{\bm{x}}}&= -\int_{\Dhat} \LRp{\sum_{i=1}^d \bm{v},\bm{A}_i^T\pd{\bm{q}}{\bm{x}_i}}J \diff{\widehat{\bm{x}}} + \sum_{f\in \partial D^k}\int_{\widehat{f}}\LRp{\frac{1}{2}\bm{A}_n\avg{\bm{v}} + \frac{\tau_{\sigma}}{2}\bm{A}_n\bm{A}_n^T\jump{\bm{\sigma}}}\bm{q}J^f\diff{\widehat{\bm{x}}}
\end{align*}
where we have incorporated spatial variations of $J$ into the definition of the weights on the left hand side.  

The proof of energy stability in Theorem~\ref{thm:stability} \reviewerOne{follows \cite{chan2016weight2}, requiring} only algebraic manipulations of this formulation.  \reviewerOne{The proof} does not require that integration-by-parts holds discretely.  This implies that a discrete version of energy stability is still guaranteed in the presence of inexact quadrature, where integrals in Theorem~\ref{thm:stability} are replaced with quadrature approximations.  This is especially important for discretizations on curvilinear meshes, where the exact integration of spatially varying geometric factors and Jacobians can be either prohibitively expensive for high order curvilinear mappings or impossible for rational mappings \cite{engvall2016isogeometric, michoski2015foundations}.  

We note that, to ensure energy stability on curved and non-affine elements, the ``skew-symmetric'' formulation must be evaluated explicitly using quadrature, which is typically more expensive than quadrature-free evaluations used to evaluate the ``strong'' DG formulation (\ref{eq:dgform}).  These costs can be slightly reduced for most curvilinear meshes by evaluating the DG formulation using the ``skew-symmetric'' formulation on curvilinear elements and the more efficient ``strong'' DG formulation on affine elements \cite{chan2016weight2}.  

\subsection{Convergence analysis}

Using estimates from Section~\ref{sec:mwadgprop}, we can extend the semi-discrete convergence analysis in \cite{houston2002discontinuous,warburton2013low,chan2016weight1} to linear elastic wave propagation on meshes of affine elements.  Techniques in \cite{warburton2013low} can be used to extend this analysis to curvilinear elements.  

Let $\bm{U},\bm{U}_h$ denote the exact and discrete WADG solutions, respectively.  We will assume that $\bm{U},\pd{\bm{U}}{t}$ are sufficiently regular such that
\[
\nor{\bm{U}}_{W^{N+1,2}\LRp{\Oh}} < \infty, \qquad \nor{\pd{\bm{U}}{t}}_{W^{N+1,2}\LRp{\Oh}} < \infty,
\]
where we define $\nor{\bm{U}}_{W^{N+1,2}\LRp{\Oh}}^2 = \sum_k \nor{\bm{U}}_{W^{N+1,2}\LRp{D^k}}^2$.


In terms of group variables $\bm{U} = \LRp{\bm{v},\bm{\sigma}}$ and $\bm{V} = \LRp{\bm{w},\bm{q}} \in \LRp{V_h\LRp{\Oh}}^d\times\LRp{V_h\LRp{\Oh}}^{N_d}$, the WADG formulation can be written as
\begin{align*}
&\LRp{T^{-1}_{\bm{A}_0^{-1}}\pd{\bm{U}}{t},\bm{V}}_{\L} + a(\bm{U},\bm{V}) + b(\bm{U},\bm{V}) = (\bm{f},\bm{V})\\
a(\bm{U},\bm{V}) &= \sum_{D^k\in \Oh} \LRp{-\LRp{ \sum_{i=1}^d \bm{\sigma},\bm{A}_i \pd{\bm{w}}{\bm{x}_i}}_{\Lk}  + \LRp{\sum_{i=1}^d \bm{A}_i \pd{\bm{v}}{\bm{x}_i},\bm{q}}_{\Lk}} \\
b(\bm{U},\bm{V}) &= \sum_{D^k\in \Oh}\LRp{ \LRa{\bm{A}_n^T\avg{\bm{\sigma}} + \frac{\tau_{\bm{v}}}{2}\bm{A}_n^T\bm{A}_n\jump{\bm{v}},\bm{w}}_{\Ldk} + \LRa{\frac{1}{2}\bm{A}_n\jump{\bm{v}} + \frac{\tau_{\sigma}}{2}\bm{A}_n\bm{A}_n^T\jump{\bm{\sigma}},\bm{q}}_{\Ldk} },
\end{align*}
where $\bm{A}_0(\bm{x}) = {\rm diag}\LRp{\rho(\bm{x})\bm{I}^{d\times d},\bm{C}^{-1}(\bm{x})}$.  Energy stability implies that
\begin{align*}
&b(\bm{U},\bm{U}) = \sum_{D^k\in\Oh} \LRp{\frac{\tau_{\bm{v}}}{2}\nor{\jump{\bm{A}_n\bm{v}}}_{\Ldk}^2 + \frac{\tau_{\bm{\sigma}}}{2}\nor{\jump{\bm{A}^T_n\bm{\sigma}}}_{\Ldk}^2}\\
&\LRp{T^{-1}_{\bm{A}_0^{-1}}\pd{\bm{U}}{t},\bm{U}}_{\L} + b(\bm{U},\bm{U}) =  (\bm{f},\bm{U}).
\end{align*}
Since the DG formulation (\ref{eq:dgform}) is consistent, these solutions satisfy
\begin{align}
\LRp{\bm{A}_0\pd{\bm{U}}{t},\bm{V}}_{\L} + a(\bm{U},\bm{V}) + b(\bm{U},\bm{V}) &= (\bm{f},\bm{V}) \nonumber\\
\LRp{T_{\bm{A}_0^{-1}}^{-1}\pd{\bm{U}_h}{t},\bm{V}}_{\L} + a(\bm{U}_h,\bm{V}) + b(\bm{U}_h,\bm{V}) &= (\bm{f},\bm{V})
\label{eq:dgvswadg}
\end{align}
for all $\bm{V} \in \LRp{V_h\LRp{\Oh}}^d\times\LRp{V_h\LRp{\Oh}}^{N_d}$.  We decompose the error $\bm{U}-\bm{U}_h$ into a projection error $\bm{\epsilon}$ and discretization error $\bm{\eta}$.
\[
\bm{U}-\bm{U}_h  = \LRp{\Pi_N\bm{U}-\bm{U}_h}-\LRp{\Pi_N\bm{U}-\bm{U}} = \bm{\eta} - \bm{\epsilon}.
\]
We assume that $\bm{U}_h(\bm{x},0)$ is the $L^2$ projection of the exact initial condition, such that $\left.\bm{\eta}\right|_{t=0} = 0$.  We also introduce a consistency error $\bm{\delta} = \bm{A}_0\bm{U} - T_{\bm{A}_0^{-1}}^{-1}\bm{U}$ resulting from the approximation of $\bm{A}_0\bm{U}$ by a weight-adjusted inner product
\[
\bm{A}_0\pd{\bm{U}}{t} - T_{\bm{A}_0^{-1}}^{-1}\pd{\bm{U}_h}{t} = \pd{}{t}\LRp{\bm{A}_0\bm{U} - T_{\bm{A}_0^{-1}}^{-1}\bm{U}} + \pd{}{t}T_{\bm{A}_0^{-1}}^{-1}\LRp{\Pi_N\bm{U}-\bm{U}_h} = \pd{\bm{\delta}}{t} + \pd{}{t}\LRp{T_{\bm{A}_0^{-1}}^{-1}\bm{\eta}}
\]  
where we have used that $\bm{T}_{\bm{A}_0^{-1}}^{-1} = \bm{T}_{\bm{A}_0^{-1}}^{-1}\Pi_N$.  Subtracting the DG and WADG formulations in (\ref{eq:dgvswadg}) and setting $\bm{V} = \bm{\eta}$ then yields 
\begin{align}
\frac{1}{2}\pd{}{t}\LRp{T^{-1}_{\bm{A}_0^{-1}}\bm{\eta},\bm{\eta}}_{\L} + b(\bm{\eta},\bm{\eta}) &= \LRp{\reviewerTwo{-}\pd{\bm{\delta}}{t},\bm{\eta}}_{\L} + a(\bm{\epsilon},\bm{\eta}) + b(\bm{\epsilon},\bm{\eta}),
\label{eq:erreqn}
\end{align}
where we have used that $a(\bm{\eta},\bm{\eta}) = 0$ by skew symmetry.  

We bound $a(\bm{\epsilon},\bm{\eta}) + b(\bm{\epsilon},\bm{\eta})$ on the right hand side by integrating by parts the stress equation and using the component-wise $L^2$ orthogonality of $\bm{\epsilon}$ to derivatives of $\bm{\eta}$.  This reduces the term $a(\bm{\epsilon},\bm{\eta}) + b(\bm{v},\bm{\eta})$ to surface contributions over each element, which can be combined with contributions from neighboring elements to yield
\begin{align*}
 \sum_{D^k\in \Oh}&\LRp{ \LRa{\bm{A}_n^T\avg{\bm{\epsilon}_\sigma} + \frac{\tau_{\bm{v}}}{2}\bm{A}_n^T\bm{A}_n\jump{\bm{\epsilon}_v},\bm{\eta}_v}_{\Ldk} + \LRa{\bm{A}_n\avg{\bm{\epsilon}_v} + \frac{\tau_{\sigma}}{2}\bm{A}_n\bm{A}_n^T\jump{\bm{\epsilon}_\sigma},\bm{\eta}_\sigma}_{\Ldk}}\\
&= \frac{1}{2}\sum_{D^k\in \Oh}\LRp{ 
\LRa{\avg{\bm{\epsilon}_\sigma} - \frac{\tau_{\bm{v}}}{2}\bm{A}_n\jump{\bm{\epsilon}_v},\bm{A}_n\jump{\bm{\eta}_v}}_{\Ldk} + \LRa{\avg{\bm{\epsilon}_v} - \frac{\tau_{\bm{\sigma}}}{2}\bm{A}_n^T\jump{\bm{\epsilon}_\sigma},\bm{A}_n^T\jump{\bm{\eta}_{\sigma}}}_{\Ldk}}\\ 
&\leq \frac{1}{2}\sum_{D^k\in \Oh}\nor{\avg{\bm{\epsilon}_\sigma} - \frac{\tau_{\bm{v}}}{2}\bm{A}_n\jump{\bm{\epsilon}_v}}_{\Ldk} \nor{\bm{A}_n\jump{\bm{\eta}_v}}_{\Ldk} + \nor{\avg{\bm{\epsilon}_v} - \frac{\tau_{\bm{\sigma}}}{2}\bm{A}_n^T\jump{\bm{\epsilon}_\sigma}}_{\Ldk}\nor{\bm{A}_n^T\jump{\bm{\eta}_{\sigma}}}_{\Ldk}\\ 
&\leq C_\tau \sum_{D^k\in \Oh}\nor{\bm{\epsilon}}_{\Ldk} \LRp{\frac{\tau_{\bm{v}}}{2}\nor{\bm{A}_n\jump{\bm{\eta}_v}}_{\Ldk}^2 + \frac{\tau_{\bm{\sigma}}}{2}\nor{\bm{A}_n^T\jump{\bm{\eta}_{\sigma}}}_{\Ldk}^2}^{1/2}
\end{align*}
where $C_\tau$ is proportional to $\max\LRp{\tau_{\bm{v}}, \tau_{\bm{\sigma}}}$.  Using Young's inequality with $\alpha = C_\tau / 2$ yields the following bound 
\[
\LRb{a(\bm{\epsilon},\bm{\eta}) + b(\bm{\epsilon},\bm{\eta})} \leq {b(\bm{\eta},\bm{\eta})} + \frac{C_\tau^2 }{4} \sum_{D^k\in \Oh}\nor{\bm{\epsilon}}_{\Ldk}^2.
\]
Applying this to (\ref{eq:erreqn}) and using Cauchy-Schwarz on $\LRp{\pd{\bm{\delta}}{t},\bm{\eta}}_{\L}$ then yields
\[
\frac{1}{2}\pd{}{t}\LRp{T^{-1}_{\bm{A}_0^{-1}}\bm{\eta},\bm{\eta}}_{\L} + b(\bm{\eta},\bm{\eta}) \leq \nor{\pd{\bm{\delta}}{t}}_{\L} \nor{\bm{\eta}}_{\L} +  b(\bm{\eta},\bm{\eta}) + \sum_{D^k \in \Oh}\frac{ C_\tau^2}{4} \nor{\bm{\epsilon}}_{\Ldk}^2.
\]
We eliminate factors of $b(\bm{\eta},\bm{\eta})$ on both sides and bound right hand side terms.  The trace term is bounded using a standard $hp$ trace inequality \cite{warburton2003constants} and an interpolation estimate
\[
\sum_{D^k \in \Oh} \nor{\bm{\epsilon}}_{\Ldk}^2 \leq \sum_{D^k \in \Oh} C h^{-1} \nor{\bm{\epsilon}}_{\Lk}^2 = Ch^{-1} \nor{\bm{\epsilon}}^2_{\L} \leq Ch^{2N+1} \nor{\bm{U}}^2_{W^{N+1,2}}.  
\]
Since $\bm{A}_0$ is independent of $t$, Theorem~\ref{thm:west} gives
\[
\nor{\pd{\bm{\delta}}{t}}_{\L} = \nor{\bm{A}_0 \pd{\bm{U}}{t} - T_{\bm{A}_0^{-1}}^{-1}\pd{\bm{U}}{t}}_{\L} \leq C h^{N+1} {A}_{\max} \nor{\bm{A}_0}_{W^{N+1,\infty}(\Oh)}\nor{\pd{\bm{U}}{t}}_{\L}\reviewerTwo{,}
\]
where $A_{\max} = \max\LRp{\frac{\rho_{\max}}{\rho_{\min}}, c_{\max} \tilde{c}_{\max}}$.  Then, integrating from $[0,T]$ and using Lemma~\ref{lemma:props} yields 
\begin{equation}
A_{\min} \nor{\bm{\eta}}^2_{\L} \leq C\int_0^T  h^{N+1}A_{\max}\nor{\bm{A}_0}_{W^{N+1,\infty}(\Oh)}\nor{\pd{\bm{U}}{t}}_{\L} \nor{\bm{\eta}}_{\L} + h^{2N+1} \nor{\bm{U}}^2_{W^{N+1,2}(\Omega)} \diff{t}\reviewerTwo{,}
\label{eq:intermed}
\end{equation}
where $A_{\min} =\min\LRp{ \rho_{\min}, \tilde{c}_{\min}}$.  Applying the modified Gronwall's inequality (Lemma 1.10 in \cite{dolejvsi2015discontinuous}; see also \cite{chan2016weight1}) to (\ref{eq:intermed}) then yields
\begin{align*}
\nor{\bm{\eta}}_{\L} &\leq \frac{1}{A_{\min}} \int_0^T CA_{\max} h^{N+1}\nor{\bm{A}_0}_{W^{N+1,\infty}(\Oh)}\nor{\pd{\bm{U}}{t}}_{\L}\diff{t} + \sup_{t\in [0,T]} \sqrt{\int_0^T C h^{2N+1} \nor{\bm{U}}^2_{W^{N+1,2}(\Omega)}\diff{t}}\\
&\leq \frac{C T h^{N+1/2}}{A_{\min}} \sup_{t \in [0,T]}\LRp{h^{1/2} A_{\max}\nor{\bm{A}_0}_{W^{N+1,\infty}(\Oh)}\nor{\pd{\bm{U}}{t}}_{\L}  +  \nor{\bm{U}}_{W^{N+1,2}(\Oh)}}.
\end{align*}
The triangle inequality gives the final estimate
\[
\nor{\bm{U}-\bm{U}_h} \leq \LRp{C_1 + C_2 T} h^{N + 1/2}\sup_{t \in [0,T]}\LRp{ h^{1/2}\nor{\bm{A}_0}_{W^{N+1,\infty}(\Oh)}\nor{\pd{\bm{U}}{t}}_{W^{N+1,2}(\Oh)}+\nor{\bm{U}}_{W^{N+1,2}(\Oh)}}\reviewerTwo{,}
\]
where $C_2$ depends on $A_{\min}, A_{\max}$.  \reviewerThree{From this estimate, we expect $L^2$ errors to decrease proportionally to $O(h^{N+1/2})$ under mesh refinement, which mirrors theoretical results given in \cite{warburton2013low, chan2016weight1}.  Optimal rates of $O(h^{N+1})$ are often observed in practice.  However, we do also observe $O(h^{N+1/2})$ rates of convergence for $N = 1,\ldots,5$ for certain problems, which suggests that the theoretical estimate is tight. }

\section{Numerical experiments}
\label{sec:numerical}
The following sections present several numerical experiments validating the stability and accuracy of the proposed method in two and three dimensions.  The energy stability of the method is also confirmed for examples with sub-cell variations in heterogeneous media and curvilinear meshes.  The convergence of the new DG formulation in piecewise constant isotropic media is confirmed using analytic solutions, while the convergence of the method for high order approximations of heterogeneous media is confirmed using a fine grid reference solution.  Finally, the method is applied to problems with anisotropy and stiffness matrices $\bm{C}$ with sub-element variations.  

In all experiments, we follow \cite{chan2016weight1} and compute application of weight-adjusted mass matrices using a quadrature exact for polynomials of degree $(2N+1)$ \cite{xiao2010quadrature}.  Time integration is performed using the low-storage 4th order five-stage Runge-Kutta scheme of Carpenter and Kennedy \cite{carpenter1994fourth}, and the time-step is chosen based on the global \reviewerOne{estimate}
\begin{equation}
dt = \min_{k} \frac{C_{\rm CFL}}{  \sup_{\bm{x}\in\Omega}\nor{\bm{C}(\bm{x})}_{2} C_N \nor{J^f}_{L^{\infty}\LRp{\partial D^k}}\nor{J^{-1}}_{L^{\infty}\LRp{D^k}}}, 
\label{eq:dt}
\end{equation}
where $C_N = O(N^2)$ is the order-dependent constant in the surface polynomial trace inequality \cite{chan2015gpu} and $C_{\rm CFL}$ is a tunable global CFL constant.  \reviewerOne{This estimate is derived by bounding the eigenvalues of the spatial DG discretization matrix appearing in the semi-discrete system of ODEs.  We note that the usual factor of $h$ arises through the term 
\[
\frac{1}{\nor{J^f}_{L^{\infty}\LRp{\partial D^k}}\nor{J^{-1}}_{L^{\infty}\LRp{D^k}}} = O(h)
\]
due to the fact that $\nor{J^{-1}}_{L^{\infty}\LRp{D^k}} = O(h^{-d})$ and $\nor{J^f}_{L^{\infty}\LRp{D^k}} = O(h^{d-1})$ in $d$ dimensions.  
} 

Finally, in all following experiments, we use $\tau_{\bm{v}} = \tau_{\bm{\sigma}} = 1$ \reviewerOne{and $C_{\rm CFL} = 1$} unless specified otherwise. \reviewerOne{We have arbitrarily chosen these parameters for simplicity, though a more nuanced choice of penalty parameters and CFL constant may improve numerical and computational performance for certain problems.  We note that because $dt$ is derived through an upper bound on the spectral radius of the discretization matrix, this estimate of the timestep is rather conservative, and we have observed that it is possible to take $C_{\rm CFL} > 1$ without losing stability under our choice of timestepping scheme. }

\subsection{Spectra and choice of penalty parameter}
\label{sec:scaling}

We first verify the energy stability of the WADG method for arbitrary heterogeneous media.  We use a stiffness matrix constructed using similarity transforms, such that at every quadrature point, $\bm{C}(\bm{x}) = \bm{U}\bm{D}\bm{U}^{T}$, where $\bm{D}$ is a diagonal matrix with random positive entries $d_{\min} \leq \bm{D}_{ii} \leq d_{\max}$ and $\bm{U}$ is a random unitary matrix.  Let $\bm{A}_h$ denote the matrix induced by the global semi-discrete DG formulation, such that the time evolution of the solution $\bm{v},\bm{\sigma}$ is governed by
\[
\pd{\bm{Q}}{t} = \bm{A}_h\bm{Q},
\]
with $\bm{Q}$ denotes a vector of degrees of freedom for $(\bm{v},\bm{\sigma})$.  Figure~\ref{fig:spectra} shows computed eigenvalues of $\bm{A}_h$ for $[d_{\min},d_{\max}] = [1/10,1]$ and $[\reviewerThree{10^{-5}},1]$, $\tau_{\bm{v}}=\tau_{\bm{\sigma}} = 0$ and $\tau_{\bm{v}}=\tau_{\bm{\sigma}} = 1$ under discretization parameters $N=4$ and $h = 1/4$.  In both cases, the largest real part of any eigenvalue is $O(10^{-14})$, verifying the energy stability of the WADG discretization for arbitrary media.  

\begin{figure}
\centering
\subfloat[$\tau_{\bm{v}} = \tau_{\bm{\sigma}} = 0$]{\includegraphics[width=.475\textwidth]{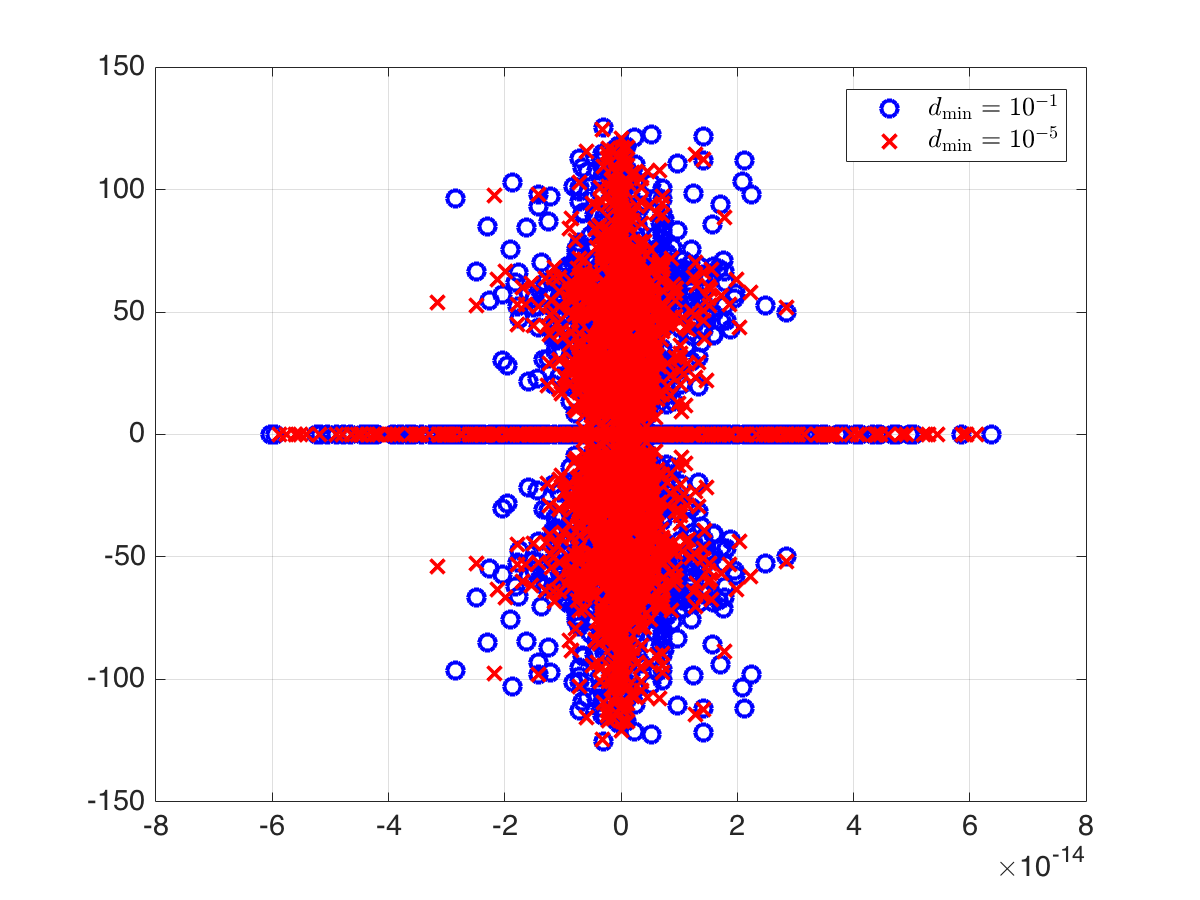}}
\subfloat[$\tau_{\bm{v}} = \tau_{\bm{\sigma}} = 1$]{\includegraphics[width=.475\textwidth]{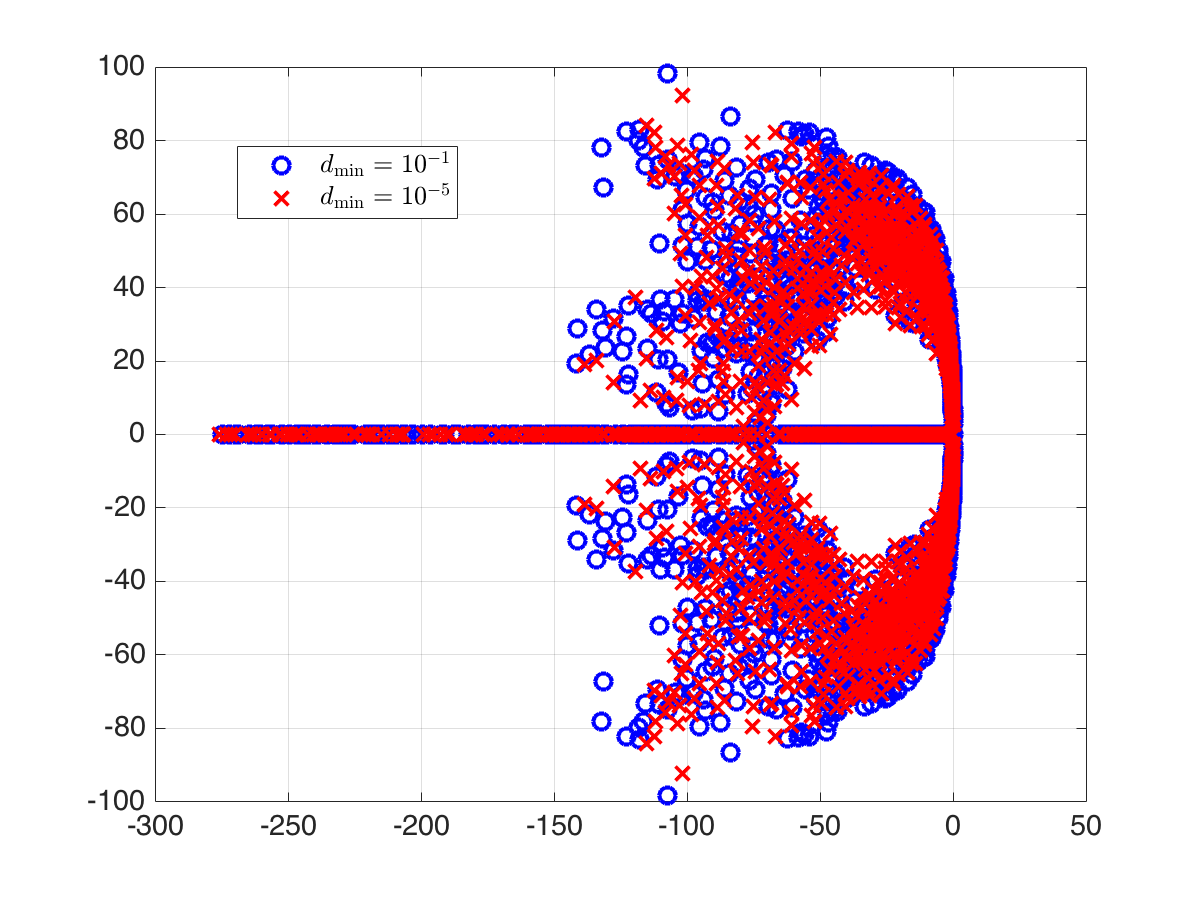}}
\caption{Spectra for $N=4$ and $h = 1/4$, using randomly chosen $\bm{C}(\bm{x})$ with eigenvalues between $[d_{\min}, 1]$ at each quadrature point.  For both $\tau_{\bm{v}} = \tau_{\bm{\sigma}} = 0$ and $\tau_{\bm{v}} = \tau_{\bm{\sigma}} = 1$, the largest real part of the spectra is $O(10^{-14})$.}
\label{fig:spectra}
\end{figure}

For practical simulations, the choice of $\tau_{\bm{v}}, \tau_{\bm{\sigma}}$ remains to be specified.  Taking $\tau_{\bm{v}},\tau_{\bm{\sigma}} > 0$ results in damping of under-resolved spurious components of the solution \cite{chan2016short}; however, a naive selection of these penalty parameters can result in an overly restrictive time-step restriction for stability.  We wish to choose $\tau_{\bm{v}},\tau_{\bm{\sigma}}$ as large as possible without increasing \reviewerOne{the value of $\nor{\bm{A}_h}$ when using a central flux (i.e.\ $\tau_{\bm{v}} = \tau_{\bm{\sigma}} = 0$).  For example, in Figure~\ref{fig:spectra}, we observe that the value of $\nor{\bm{A}_h}$ for a central flux  is roughly half as large as the value of $\nor{\bm{A}_h}$ when taking $\tau_{\bm{v}} = \tau_{\bm{\sigma}} = 1$.  We note that the growth in $\nor{\bm{A}_h}$ when $\tau_{\bm{v}} = \tau_{\bm{\sigma}} = 1$ is due to the large negative real part of the extremal eigenvalues of $\bm{A}_h$, which mirrors observations in \cite{chan2016short} that a subset of the eigenvalues of $\bm{A}_h$ approach $-\infty$ as the penalty parameter increases.  

Initial numerical experiments suggest that varying the penalty parameters spatially and scaling $\tau_{\bm{v}}$ and $\tau_{\bm{\sigma}}$ independently of each other can offset the artificial stiffness induced by a naive choice of penalty parameters.  For example, material coefficients can be taken into account by scaling the penalty parameters such that flux terms are dimensionally consistent.  One example of such a scaling is
\[
\tau_{\bm{v}} = \gamma_v \sup_{\bm{x} \in f} \sqrt{\nor{\avg{\bm{C}(\bm{x})}}\avg{\rho(\bm{x})}}, \qquad \tau_{\bm{\sigma}} =  \gamma_\sigma  \sup_{\bm{x} \in f} \frac{1}{\sqrt{\nor{\avg{\bm{C}(\bm{x})}}\avg{\rho(\bm{x})}}},
\]  
where the supremum is taken locally over each face and $\gamma_v, \gamma_\sigma$ are dimensionless constants.  We note that the optimal choice of scaling is outside of the scope of this current paper, and will be explored in future work.
}

\subsection{Analytic solutions} 

Next, we study the accuracy and convergence of weight-adjusted DG method for several analytical solutions in linear elasticity.  In all cases, the solution is expressed \reviewerOne{in} terms of the displacement vector $\bm{u}(\bm{x},t) = (u_1,\ldots,u_d)$.  Initial conditions for velocity and stress are computed through 
\[
\bm{v}(\bm{x},t) = \pd{\bm{u}}{t}, \qquad \bm{\sigma} = \bm{C}\frac{1}{2}\LRp{\Grad\bm{u} + \Grad\bm{u}^T}.  
\]
Unless otherwise stated, we report relative $L^2$ errors for all components of the solution $\bm{U} = (\bm{v},\bm{\sigma})$ 
\[
\frac{\nor{\bm{U}-\bm{U}_h}_{L^2\LRp{\Omega}}}{\nor{\bm{U}}_{L^2\LRp{\Omega}}} = \frac{\LRp{\sum_{i=1}^m \nor{\bm{U}_i-\bm{U}_{i,h}}^2_{L^2\LRp{\Omega}}}^{1/2}}{\LRp{\sum_{i=1}^m\nor{\bm{U}_i}^2_{L^2\LRp{\Omega}}}^{1/2}}.
\]

\subsubsection{Harmonic oscillation of a square} 

We first examine convergence  on a unit square domain with  $\lambda = \mu = \rho = 1$.   The components of the displacement vector are given by
\begin{align*}
u_1(x,y,t) &= \cos(\omega\pi t)\cos(\pi x)\sin(\pi y)\\
u_2(x,y,t) &= -\cos(\omega\pi t)\sin(\pi x)\cos(\pi y),
\end{align*}
where $\omega = \sqrt{2\mu}$.  Zero traction boundary conditions are imposed.  Figure~\ref{fig:harmonic_osc} shows $L^2$ errors computed at time $T = 5$, using uniform triangular meshes constructed by bisecting a uniform mesh of quadrilaterals along the diagonal.

 \reviewerOne{
For $N = 1,\ldots, 5$, $O(h^{N+1})$ rates of convergence are observed when using the penalty flux with $\tau_{\bm{v}} = \tau_{\bm{\sigma}} = 1$.  When using a central flux (with $\tau_{\bm{v}} = \tau_{\bm{\sigma}} = 0$), we observe a so-called ``even-odd'' pattern \cite{hesthaven2007nodal, mercerat2015nodal}, where the convergence rate is $O(h^N)$ for $N$ odd and between $O(h^{N+1/2})$ and $O(h^{N+1})$ for $N$ even.  This behavior improves upon the theoretical estimate derived in \cite{delcourte2015analysis}, which is $O(h^N)$ for a sufficiently small time-step size.  
 
 For quasi-uniform meshes, the optimal rate of convergence of spatial $L^2$ errors under uniform mesh refinement is $O(h^{N+1})$ \cite{hesthaven2007nodal}, which is greater than the $O(h^{N+1/2})$ rate of convergence which can be proven for dissipative DG discretizations on general meshes \cite{cockburn2008optimal} (though optimal rates of convergence are often observed in numerical experiments).  We note that for $N = 4$ and $N=5$, we observe results for both fluxes which are better than the 4th order accuracy of our time-stepping scheme.  This is most likely due to the benign nature of the solution and the choice of timestep (\ref{eq:dt}), which scales as $O(h/N^2)$.   For $N=4, 5$, the results of Figure~\ref{fig:harmonic_osc} suggest that the resulting timestep is small enough such that temporal errors of $O(dt^4)$ are small relative to spatial discretization errors of $O(h^{N+1})$. }

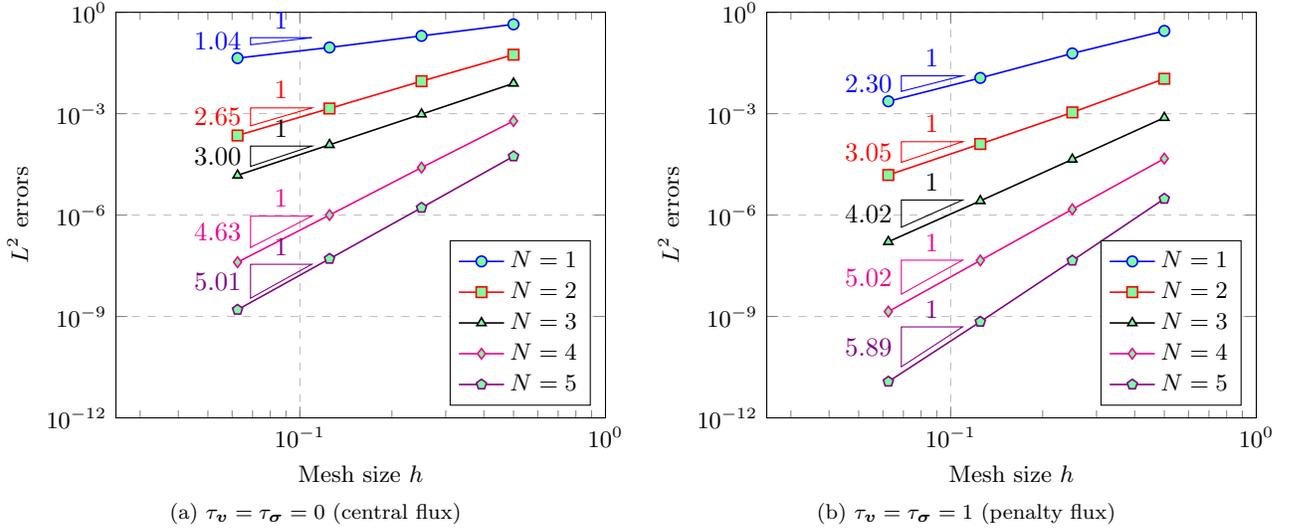
\begin{figure}
\centering
\subfloat[\reviewerOne{$\tau_{\bm{v}} = \tau_{\bm{\sigma}} = 0$ (central flux)}]{
\begin{tikzpicture}
\begin{loglogaxis}[
	legend cell align=left,
	width=.49\textwidth,
    xlabel={Mesh size $h$},
       ylabel={$L^2$ errors}, 
    xmin=.025, xmax=1,
    ymin=1e-12, ymax=1,
    legend pos=south east,
    xmajorgrids=true,
    ymajorgrids=true,
    grid style=dashed,
] 


\addplot[color=blue,mark=*,semithick, mark options={fill=markercolor}]
coordinates{(0.5,0.439397)(0.25,0.200219)(0.125,0.0909291)(0.0625,0.0440705)};
\logLogSlopeTriangleFlip{0.4}{0.125}{0.92}{1.04}{blue}

\addplot[color=red,mark=square*,semithick, mark options={fill=markercolor}]
coordinates{(0.5,0.0555882)(0.25,0.00915474)(0.125,0.0014232)(0.0625,0.000226124)};
\logLogSlopeTriangleFlip{0.4}{0.125}{0.72}{2.65}{red}

\addplot[color=black,mark=triangle*,semithick, mark options={fill=markercolor}]
coordinates{(0.5,0.00789363)(0.25,0.000967494)(0.125,0.000120029)(0.0625,1.499e-05)};
\logLogSlopeTriangleFlip{0.4}{0.125}{0.62}{3.00}{black}

\addplot[color=magenta,mark=diamond*,semithick, mark options={fill=markercolor}]
coordinates{(0.5,0.000607677)(0.25,2.54019e-05)(0.125,1.00184e-06)(0.0625,4.03515e-08)};
\logLogSlopeTriangleFlip{0.4}{0.125}{0.42}{4.63}{magenta}

\addplot[color=violet,mark=pentagon*,semithick, mark options={fill=markercolor}]
coordinates{(0.5,5.47387e-05)(0.25,1.64848e-06)(0.125,5.08294e-08)(0.0625,1.58192e-09)};
\logLogSlopeTriangleFlip{0.4}{0.125}{0.295}{5.01}{violet}

\legend{$N=1$,$N=2$,$N=3$,$N=4$,$N=5$}
\end{loglogaxis}
\end{tikzpicture}
}
\subfloat[$\tau_{\bm{v}} = \tau_{\bm{\sigma}} = 1$ (penalty flux)]{
\begin{tikzpicture}
\begin{loglogaxis}[
	legend cell align=left,
	width=.49\textwidth,
    xlabel={Mesh size $h$},
       ylabel={$L^2$ errors}, 
    xmin=.025, xmax=1,
    ymin=1e-12, ymax=1,
    legend pos=south east,
    xmajorgrids=true,
    ymajorgrids=true,
    grid style=dashed,
] 

\addplot[color=blue,mark=*,semithick, mark options={fill=markercolor}]
coordinates{(0.5,0.281186)(0.25,0.0607224)(0.125,0.0114231)(0.0625,0.00232631)};
\logLogSlopeTriangleFlip{0.4}{0.125}{0.805}{2.30}{blue}

\addplot[color=red,mark=square*,semithick, mark options={fill=markercolor}]
coordinates{(0.5,0.010807)(0.25,0.00109063)(0.125,0.000126864)(0.0625,1.53214e-05)};
\logLogSlopeTriangleFlip{0.4}{0.125}{0.63}{3.05}{red}

\addplot[color=black,mark=triangle*,semithick, mark options={fill=markercolor}]
coordinates{(0.5,0.000760654)(0.25,4.44169e-05)(0.125,2.63258e-06)(0.0625,1.62546e-07)};
\logLogSlopeTriangleFlip{0.4}{0.125}{0.47}{4.02}{black}

\addplot[color=magenta,mark=diamond*,semithick, mark options={fill=markercolor}]
coordinates{(0.5,4.68589e-05)(0.25,1.48545e-06)(0.125,4.55009e-08)(0.0625,1.39823e-09)};
\logLogSlopeTriangleFlip{0.4}{0.125}{0.305}{5.02}{magenta}

\addplot[color=violet,mark=pentagon*,semithick, mark options={fill=markercolor}]
coordinates{(0.5,3.03577e-06)(0.25,4.50412e-08)(0.125,6.98053e-10)(0.0625,1.17913e-11)};
\logLogSlopeTriangleFlip{0.4}{0.125}{0.126}{5.89}{violet}

\legend{$N=1$,$N=2$,$N=3$,$N=4$,$N=5$}
\end{loglogaxis}
\end{tikzpicture}
}
\caption{Convergence of $L^2$ errors for harmonic oscillation.}
\label{fig:harmonic_osc}
\end{figure}

\subsubsection{Rayleigh and Lamb waves}  
\label{sec:rlamb}
Next, we examine the convergence of WADG for Rayleigh and Lamb waves, both of which test the imposition of traction-free boundary conditions.  

Rayleigh waves are elastic surface waves which decay exponentially away from the surface.  These waves are given by the displacement vector
\begin{align*}
\bm{u}\LRp{x,y,t} &= e^{-\omega x \sqrt{1-\xi^2}}
\LRp{
\begin{array}{c}
\cos(\omega(y+c_r t)) \\
\sqrt{1-\xi^2}\sin(\omega(y+c_r t))
\end{array}
}\\
&+ \LRp{\frac{\xi^2}{2}-1}e^{-\omega x\sqrt{1-\frac{\xi^2\mu}{2\mu + \lambda}}}
\LRp{
\begin{array}{c}
\cos(\omega(y + c_r t))\\
{\sin(\omega(y+c_r t))}/{\sqrt{1-\frac{\xi^2\mu}{2\mu + \lambda}}}
\end{array}
},
\end{align*}
where $\omega$ is the wavespeed, $c_r$ is the Rayleigh phase velocity $c_r = \xi \sqrt{\mu}$, and $\xi$ satisfies
\[
\sqrt{1-\xi^2}\sqrt{1 - \frac{\xi^2\mu}{2\mu+\lambda}} - \LRp{\frac{\xi^2}{2}-1}^2 = 0.
\]
In our computations, we use $\rho = \mu = \lambda = 1$, $\xi = 0.949554083888034$, and $\omega = 2\pi$ \cite{wilcox2010high}.  We solve on the domain $[0,2]\times[0,1]$ using a sequence of uniform triangular meshes, and enforce traction-free boundary conditions at $x=0$ and exact Dirichlet boundary conditions at $x=2$.  Periodic boundary conditions are applied at $y = 0$ and $y = 1$.  

Lamb waves are supported by elastic waveguides with traction-free (free surface) boundary conditions at the top and bottom of the domain.  The displacement of these waves is given by
\begin{align*}
{u}_1\LRp{x,y,t} &= \LRp{-kB_1 \cos(p y)- qB_2 \cos(q y)} \sin(k x - \omega t)\\
{u}_2\LRp{x,y,t} &= \LRp{-pB_1 \sin(py) + kB_2 \sin(qy)} \cos(kx - \omega t)
\end{align*}
where $k$ is the wavenumber and $\omega$ is the frequency, and the constants $p$ and $q$ are defined as
\[
p^2 = \frac{\omega^2}{2\mu + \lambda} - k^2, \qquad q^2 = \frac{\omega^2}{\mu} - k^2.
\]
The wavenumber $k$ and frequency $\omega$ are related through a dispersion relation.  The ratio of the amplitudes $B_1/ B_2$ can be determined using other parameters, implying that $B_1, B_2$ are unique up to a scaling constant.  In our experiments, we use $\rho = \mu = 1$, $\lambda = 2$, $k = 2\pi$.  For these values, $\omega = 13.137063197233$, $B_1 = 126.1992721468$ and $B_2 = 53.88807700007$ \cite{wilcox2010high}.  We solve on the domain $[-1,1]\times[-1/2,1/2]$, with traction-free boundary conditions at $y = \pm 1/2$ and periodic boundary conditions at $x = \pm 1$.  

\reviewerOne{Figures~\ref{fig:rayleigh} and \ref{fig:lamb} show $L^2$ errors for Rayleigh and Lamb waves at time $T = 5$, respectively.  As with the harmonic oscillation solution, both central and penalty fluxes are considered.  For penalty fluxes, the computed convergence rates fall between the optimal rate of $O(h^{N+1})$ and theoretical rate of $O(h^{N+1/2})$ \cite{johnson1986analysis}.  For central fluxes, we observe the odd-even pattern for Lamb waves.  However, for Rayleigh waves, we observe the theoretical $O(h^N)$ rate of convergence.

As with the harmonic oscillation problem, the accuracy of the numerical method is theoretically limited by the 4th order accuracy of the time-stepping scheme.  The observed higher order accuracy for $N=4$ and $N=5$ again suggests that the solution is smooth in time and the time-step is small enough to render temporal discretization errors small relative to spatial discretization errors.   }
\begin{figure}
\centering
\subfloat[\reviewerOne{Central flux}]{
\begin{tikzpicture}
\begin{loglogaxis}[
	legend cell align=left,
	width=.475\textwidth,
    xlabel={Mesh size $h$},
    xmin=.025, xmax=1,
    ymin=5e-9, ymax=1.5,
    legend pos=south east,
    xmajorgrids=true,
    ymajorgrids=true,
    grid style=dashed,
] 

\addplot[color=blue,mark=*,semithick, mark options={fill=markercolor}]
coordinates{(0.5,1.20336)(0.25,1.14525)(0.125,0.621593)(0.0625,0.22184)};
\logLogSlopeTriangle{.425}{0.11}{0.9}{1.49}{blue}

\addplot[color=red,mark=square*,semithick, mark options={fill=markercolor}]
coordinates{(0.5,0.916679)(0.25,0.104515)(0.125,0.0201473)(0.0625,0.00488285)};
\logLogSlopeTriangleFlip{.38}{0.1}{0.735}{2.04}{red}

\addplot[color=black,mark=triangle*,semithick, mark options={fill=markercolor}]
coordinates{(0.5,0.160567)(0.25,0.0153951)(0.125,0.00219232)(0.0625,0.000289318)};
\logLogSlopeTriangleFlip{.38}{0.1}{0.595}{2.92}{black}

\addplot[color=magenta,mark=diamond*,semithick, mark options={fill=markercolor}]
coordinates{(0.5,0.0318274)(0.25,0.00183785)(0.125,0.000115977)(0.0625,6.94596e-06)};
\logLogSlopeTriangleFlip{.38}{0.1}{0.41}{4.06}{magenta}

\addplot[color=violet,mark=pentagon*,semithick, mark options={fill=markercolor}]
coordinates{(0.5,0.00476904)(0.25,0.000209483)(0.125,8.28278e-06)(0.0625,2.77453e-07)};
\logLogSlopeTriangleFlip{0.37}{0.1}{0.245}{4.90}{violet}

\legend{$N=1$,$N=2$,$N=3$,$N=4$,$N=5$}
\end{loglogaxis}
\end{tikzpicture}
}
\subfloat[Penalty flux]{
\begin{tikzpicture}
\begin{loglogaxis}[
    legend cell align=left,
    width=.475\textwidth,
    xlabel={Mesh size $h$},
    ylabel={$L^2$ errors}, 
    xmin=.025, xmax=1,
    ymin=5e-9, ymax=1.5,
    legend pos=south east,
    xmajorgrids=true,
    ymajorgrids=true,
    grid style=dashed,
] 

\addplot[color=blue,mark=*,semithick, mark options={fill=markercolor}]
coordinates{(0.5,0.987525)(0.25,0.856084)(0.125,0.272343)(0.0625,0.0495455)};
\logLogSlopeTriangleFlip{.39}{0.11}{0.855}{2.46}{blue}

\addplot[color=red,mark=square*,semithick, mark options={fill=markercolor}]
coordinates{(0.5,0.762022)(0.25,0.0822137)(0.125,0.00510445)(0.0625,0.000517961)};
\logLogSlopeTriangleFlip{.38}{0.1}{0.625}{3.30}{red}

\addplot[color=black,mark=triangle*,semithick, mark options={fill=markercolor}]
coordinates{(0.5,0.133471)(0.25,0.00400974)(0.125,0.000280342)(0.0625,1.93442e-05)};
\logLogSlopeTriangleFlip{.38}{0.1}{0.47}{3.86}{black}

\addplot[color=magenta,mark=diamond*,semithick, mark options={fill=markercolor}]
coordinates{(0.5,0.0130378)(0.25,0.000405004)(0.125,1.36821e-05)(0.0625,4.57998e-07)};
\logLogSlopeTriangleFlip{.38}{0.1}{0.28}{4.90}{magenta}

\addplot[color=violet,mark=pentagon*,semithick, mark options={fill=markercolor}]
coordinates{(0.5,0.00157091)(0.25,3.42541e-05)(0.125,5.7373e-07)(0.0625,9.09776e-09)};
\logLogSlopeTriangleFlip{.38}{0.1}{0.085}{5.98}{violet}

\legend{$N=1$,$N=2$,$N=3$,$N=4$, $N=5$}
\end{loglogaxis}
\end{tikzpicture}
}
\caption{Convergence of $L^2$ errors for the Rayleigh wave solution.}
\label{fig:rayleigh}
\end{figure}
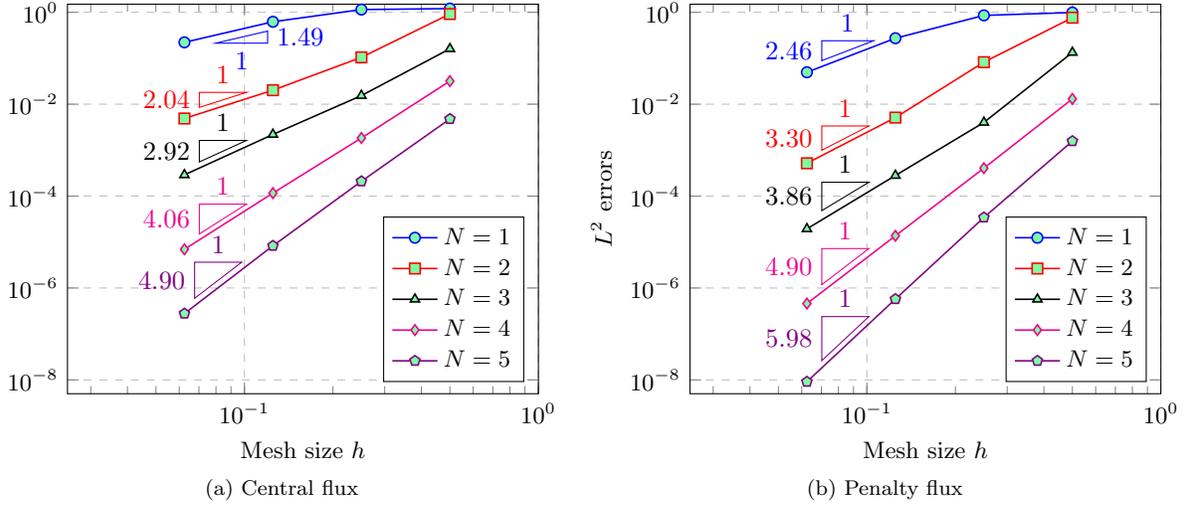

\begin{figure}
\centering
\subfloat[\reviewerOne{Central flux}]{
\begin{tikzpicture}
\begin{loglogaxis}[
    legend cell align=left,
    width=.475\textwidth,
    xlabel={Mesh size $h$},
    ylabel={$L^2$ errors}, 
    xmin=.025, xmax=1,
    ymin=1e-7, ymax=2.5,
     legend pos=south east,
    xmajorgrids=true,
    ymajorgrids=true,
    grid style=dashed,
] 


\addplot[color=blue,mark=*,semithick, mark options={fill=markercolor}]
coordinates{(0.5,1.43804)(0.25,1.5293)(0.125,1.11806)(0.0625,0.5205)};
\logLogSlopeTriangle{.425}{0.11}{0.9}{1.10}{blue}

\addplot[color=red,mark=square*,semithick, mark options={fill=markercolor}]
coordinates{(0.5,1.14227)(0.25,1.03142)(0.125,0.0764885)(0.0625,0.0100553)};
\logLogSlopeTriangleFlip{.38}{0.1}{0.72}{2.93}{red}

\addplot[color=black,mark=triangle*,semithick, mark options={fill=markercolor}]
coordinates{(0.5,1.80664)(0.25,0.0693555)(0.125,0.00751621)(0.0625,0.00118495)};
\logLogSlopeTriangleFlip{.38}{0.1}{0.585}{2.67}{black}

\addplot[color=magenta,mark=diamond*,semithick, mark options={fill=markercolor}]
coordinates{(0.5,0.258573)(0.25,0.0188506)(0.125,0.0011296)(0.0625,5.81723e-05)};
\logLogSlopeTriangleFlip{.38}{0.1}{0.425}{4.28}{magenta}

\addplot[color=violet,mark=pentagon*,semithick, mark options={fill=markercolor}]
coordinates{(0.5,0.115954)(0.25,0.00287139)(0.125,0.000102016)(0.0625,3.87019e-06)};
\logLogSlopeTriangleFlip{0.37}{0.1}{0.25}{4.72}{violet}

\legend{$N=1$,$N=2$,$N=3$,$N=4$, $N=5$}
\end{loglogaxis}
\end{tikzpicture}
}
\subfloat[Penalty flux]{
\begin{tikzpicture}
\begin{loglogaxis}[
	legend cell align=left,
	width=.475\textwidth,
    xlabel={Mesh size $h$},
    xmin=.025, xmax=1,
    ymin=1e-7, ymax=2.5,
    legend pos=south east,
    xmajorgrids=true,
    ymajorgrids=true,
    grid style=dashed,
] 

\addplot[color=blue,mark=*,semithick, mark options={fill=markercolor}]
coordinates{(0.5,1.00019)(0.25,1.00515)(0.125,0.633842)(0.0625,0.161762)};
\logLogSlopeTriangleFlip{.39}{0.11}{0.875}{1.97}{blue}

\addplot[color=red,mark=square*,semithick, mark options={fill=markercolor}]
coordinates{(0.5,1.01296)(0.25,0.503696)(0.125,0.0316637)(0.0625,0.00188428)};
\logLogSlopeTriangleFlip{.38}{0.1}{0.63}{4.07}{red}

\addplot[color=black,mark=triangle*,semithick, mark options={fill=markercolor}]
coordinates{(0.5,0.865781)(0.25,0.0445608)(0.125,0.00158453)(0.0625,0.000113704)};
\logLogSlopeTriangleFlip{.38}{0.1}{0.465}{3.80}{black}

\addplot[color=magenta,mark=diamond*,semithick, mark options={fill=markercolor}]
coordinates{(0.5,0.343325)(0.25,0.00491357)(0.125,0.000156595)(0.0625,5.13222e-06)};
\logLogSlopeTriangleFlip{.38}{0.1}{0.28}{4.93}{magenta}

\addplot[color=violet,mark=pentagon*,semithick, mark options={fill=markercolor}]
coordinates{(0.5,0.0682545)(0.25,0.000718077)(0.125,1.37715e-05)(0.0625,2.30844e-07)};
\logLogSlopeTriangleFlip{0.37}{0.1}{0.0985}{5.90}{violet}

\legend{$N=1$,$N=2$,$N=3$,$N=4$,$N=5$}
\end{loglogaxis}
\end{tikzpicture}
}
\caption{Convergence of $L^2$ errors for the Lamb wave solution.}
\label{fig:lamb}
\end{figure}
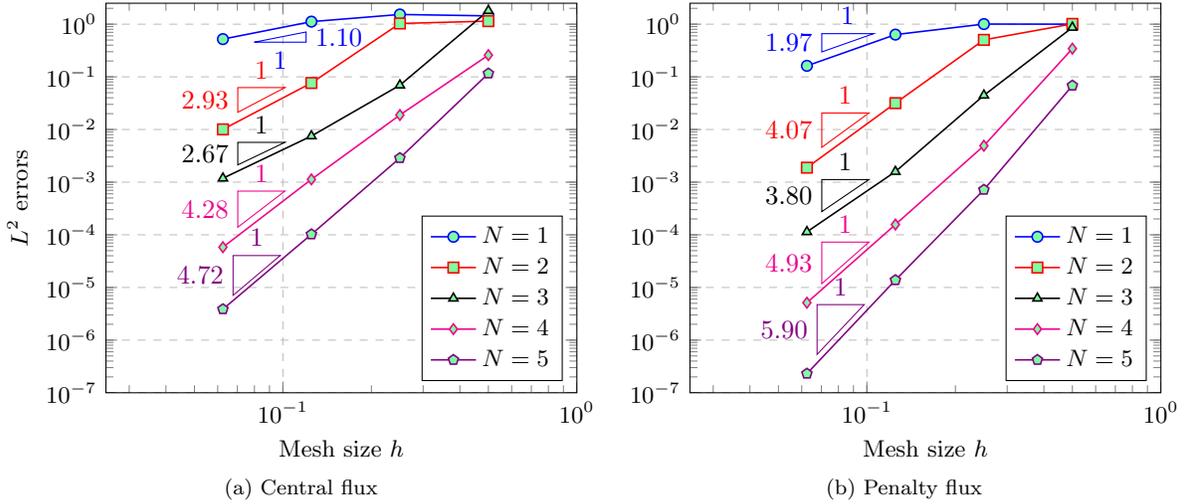

\subsubsection{Rayleigh waves in near-incompressible materials}
\label{sec:incomp}

As noted in Section~\ref{sec:wadgelas}, error estimates for isotropic elasticity no longer hold in the incompressible limit $\mu / \lambda \rightarrow 0$ due to the fact that $\bm{C}$ becomes singular.  We use the propagation of Rayleigh waves to examine the behavior of WADG for near-incompressible materials.  We follow \cite{sjogreen2012fourth, appelo2015energy} and fix $\lambda = 1$ and set $\mu = 1, .1, .01, .001, .0001$.  Since the Rayleigh wave propagates with speed proportional to $\sqrt{\mu}$, we compute $L^2$ errors at the final time $1 / (4\sqrt{\mu})$ to ensure a fair comparison between solutions at different values of $\mu$.  

Table~\ref{tab:incomp} shows relative errors for $\bm{v}$, $\bm{\sigma}$ at different orders and mesh sizes using the penalty flux.  The relative errors for $\bm{\sigma}$ grow as $\mu/\lambda \rightarrow 0$.  This is not surprising, as the constant in the error estimates of Section~\ref{sec:mwadgprop} depends on $\nor{\bm{C}}$, which blows up in the incompressible limit.  However, because the magnitude of $\bm{\sigma}$ also decreases as $\mu/\lambda\rightarrow 0$, relative errors for $\bm{v}$ remain roughly the same magnitude for near-incompressible materials.  Decreasing $\mu$ by four orders of magnitude results in up to a ten-fold increase in relative error for $\bm{\sigma}$, but less than a two-fold increase in error for $\bm{v}$.  

\begin{table}
\centering
\subfloat[Relative error in $\bm{v}$]{
\begin{tabular}{|c||c|c|c|c|c|}
\hline
 & $\mu = 1$ & $\mu = .1$ & $\mu = .01$ & $\mu = .001$ & $\mu = .0001$  \\
\hhline{|=|=|=|=|=|=|}
$N = 2$, $h = 1/4$ &  3.1063e-02  &  2.6972e-02  & 3.0858e-02 &  4.0103e-02  & 5.6835e-02\\
\hline
$N = 3$, $h = 1/4$ & 3.1677e-03 &  2.6848e-03 &  2.9854e-03 &  3.6960e-03 &  4.9362e-03\\
\hline
$N = 4$, $h = 1/4$ & 2.8726e-04 &  2.4990e-04 &  2.9142e-04 &  3.9774e-04 &  5.2469e-04\\      

\hhline{|=|=|=|=|=|=|}
$N = 2$, $h = 1/8$ & 3.1819e-03  & 2.5476e-03 &  2.8877e-03 &  3.5608e-03  & 4.7010e-03 \\
\hline
$N = 3$, $h = 1/8$ & 1.8867e-04  & 1.6925e-04  & 1.8509e-04  & 2.2520e-04  & 2.8301e-04\\
\hline
$N = 4$, $h = 1/8$ & 8.4999e-06  & 7.5750e-06 &  8.1094e-06  & 1.0760e-05  & 1.4886e-05\\
\hline         
\end{tabular}
}\\
\subfloat[Relative error in $\bm{\sigma}$]{
\begin{tabular}{|c||c|c|c|c|c|}
\hline
 & $\mu = 1$ & $\mu = .1$ & $\mu = .01$ & $\mu = .001$& $\mu = .0001$ \\
\hhline{|=|=|=|=|=|=|}
$N = 2$, $h = 1/4$ & 6.8250e-02  & 7.4130e-02 &  1.2104e-01 &  2.1333e-01 &  4.1451e-01\\
\hline
$N = 3$, $h = 1/4$ & 9.2980e-03  & 1.0685e-02 &  1.8109e-02 &  3.2046e-02 &  5.7612e-02\\
\hline
$N = 4$, $h = 1/4$ & 9.7251e-04  & 1.1687e-03  & 2.1941e-03 &  4.1378e-03 &  6.8404e-03\\
\hhline{|=|=|=|=|=|=|}
$N = 2$, $h = 1/8$ & 9.8138e-03  & 1.1429e-02  & 2.0331e-02 &  3.7889e-02 & 7.6516e-02\\
\hline
$N = 3$, $h = 1/8$ & 6.6596e-04  & 7.8157e-04  & 1.5353e-03  & 3.1239e-03 &  5.6341e-03\\
\hline
$N = 4$, $h = 1/8$ & 3.4272e-05  & 4.2639e-05 &  8.8393e-05  & 1.9764e-04 &  3.6052e-04 \\
\hline
\end{tabular}
}
\caption{Behavior of WADG for linear elastic wave propagation in the incompressible limit as $\mu / \lambda \rightarrow 0$.  Errors are shown for various orders and mesh resolutions using the penalty flux.  }
\label{tab:incomp}
\end{table}

\subsubsection{Stoneley waves} 

A Stoneley wave is supported along the interface between two solids \cite{stoneley1924elastic}.  Like Rayleigh waves, Stoneley waves decay exponentially away from the interface, and test the effectiveness of numerical fluxes across interfaces.  We follow \cite{wilcox2010high,appelo2015energy} and use discontinuous media defined by
\[
(\rho,\lambda,\mu) = \begin{cases}
(10 ,3,3) & y > 0\\
(1,1,1) & y < 0.
\end{cases}.
\]
The displacement vector for a Stoneley wave is then given by
\begin{align*}
u_1(x,y,t) &= \begin{cases}
{\rm Re}\LRp{\LRp{ikB_1e^{-kb_{1p}y} + kb_{1s}B_2 e^{-kb_{1s}y}}e^{i(ky-\omega t)}}, & y > 0\\
{\rm Re}\LRp{\LRp{-k b_{1p}B_1e^{-k b_{1p} y} + i kB_2e^{-kb_{1s}y}}e^{i(kx-\omega t)}}, & y < 0
\end{cases}\\
u_2(x,y,t) &= \begin{cases}
{\rm Re}\LRp{\LRp{ikB_3 e^{k b_{2p} y} - kb_{2s}B_4 e^{k b_{2s}y }} e^{i(kx-\omega t)}}, & y > 0\\
{\rm Re}\LRp{\LRp{k b_{2p} B_3 e^{k b_{2p}y} + ikB_4 e^{k b_{2s}y}}e^{i(kx-\omega t)}}, & y < 0
\end{cases},
\end{align*}
where $c_{st}$ is the Stoneley wave speed, and 
\[
k = \omega/ c_{st}, \qquad b_{jp} = \sqrt{1 - \frac{c_{st}^2}{(2\mu_j + \lambda_j)/\rho_j}}, \qquad  b_{js} = \sqrt{1 - \frac{c_{st}^2}{(\mu_j)/\rho_j}}, \qquad j = 1,2.
\]
The Stoneley wave speed $c_{st}$ can be determined based material parameters and interface conditions, and the amplitudes $B_1,B_2,B_3,B_4$ are determined from $c_{st}$ up to scaling by a constant.  For the parameters used in this study, we take $c_{st} = 0.546981324213884$, $B_1 = i0.2952173626624, B_2 = -0.6798795208473, B_3 = i0.5220044931212$, and $B_4 = -0.9339639688697$ \cite{wilcox2010high}.  We assume $k =1$, which gives $\omega=c_{st}$.  

We solve on the domain $[-1,1]\times [-5,5]$, and enforce Dirichlet boundary conditions at all boundaries using the exact solution.  Figure~\ref{fig:stoneley} shows $L^2$ errors for two uniform meshes of triangles constructed by bisecting a quadrilateral mesh of $K_{\rm 1D} \times 5K_{\rm 1D}$ elements. Figure~\ref{subfig:stoneley1} shows errors at time $T=5$ when $K_{\rm 1D}$ is even and the mesh is fitted to the interface at $y = 0$, while Figure~\ref{subfig:stoneley2} shows errors when $K_{\rm 1D}$ is odd and the interface cuts through element interiors.  

When the mesh is fitted to the interface, computed convergence rates \reviewerOne{using penalty fluxes match the theoretical $O(h^{N+1/2})$ rate.  When using central fluxes, we observe $O(h^{N})$ rates of convergence.  This matches the behavior observed when using central fluxes for the Rayleigh wave problem, instead of the odd-even pattern of convergence observed when using central fluxes for the harmonic oscillation and Lamb wave solutions.}

When the mesh is not fitted to the interface exactly, we compute the application of the weight-adjusted mass matrix inverse using a quadrature rule from Xiao and Gimbutas \cite{xiao2010quadrature} which is exact for degree $2N+1$ polynomials.  Since the values of $\rho,\mu$, and $\lambda$ are positive at all quadrature points, the method is energy stable.  However, since the exact solution is discontinuous, the error in elements cut by the interface is $O(1)$, resulting in $L^2$ errors which converge at rate $O(h^{1/2})$ \reviewerOne{for both penalty and central fluxes.  
We have also computed errors on a sequence of unfitted unstructured uniform meshes, as well as on a sequence of uniform meshes with randomly perturbed vertex positions.  In both cases, similar $O(h^{1/2})$ rates of convergence were observed.}  We note that, when using piecewise constant approximations of $\mu$ and $\lambda$, we observe the same $O(h^{1/2})$ convergence rate, though errors are roughly twice as large in magnitude.  
\begin{figure}[!h]
\centering
\subfloat[Central fluxes, fitted interface]{
\begin{tikzpicture}
\begin{loglogaxis}[
    legend cell align=left,
    legend style={legend pos=south east, font=\tiny},
    width=.475\textwidth,
    xlabel={Mesh size $h$},
    ylabel={$L^2$ errors}, 
    xmin=.025, xmax=1,
    ymin=1e-11, ymax=1.5,
    xmajorgrids=true,
    ymajorgrids=true,
    grid style=dashed,
] 

\addplot[color=blue,mark=*,semithick, mark options={fill=markercolor}]
coordinates{(0.5,0.503752)(0.25,0.299303)(0.125,0.158574)(0.0625,0.0815037)};
\logLogSlopeTriangleFlip{.4}{0.125}{0.9}{0.96}{blue}

\addplot[color=red,mark=square*,semithick, mark options={fill=markercolor}]
coordinates{(0.5,0.0548857)(0.25,0.0122212)(0.125,0.0029031)(0.0625,0.000712684)};
\logLogSlopeTriangleFlip{.4}{0.125}{0.725}{2.03}{red}

\addplot[color=black,mark=triangle*,semithick, mark options={fill=markercolor}]
coordinates{(0.5,0.00552463)(0.25,0.000886284)(0.125,0.000117076)(0.0625,1.5002e-05)};
\logLogSlopeTriangleFlip{.4}{0.125}{0.575}{2.96}{black}

\addplot[color=magenta,mark=diamond*,semithick, mark options={fill=markercolor}]
coordinates{(0.5,0.000342868)(0.25,2.08939e-05)(0.125,1.27168e-06)(0.0625,7.89726e-08)};
\logLogSlopeTriangleFlip{.4}{0.125}{0.375}{4.01}{magenta}

\addplot[color=violet,mark=pentagon*,semithick, mark options={fill=markercolor}]
coordinates{(0.5,2.59555e-05)(0.25,1.01846e-06)(0.125,3.39333e-08)(0.0625,1.0916e-09)};
\logLogSlopeTriangleFlip{0.4}{0.125}{0.215}{ 4.96}{violet}

\legend{$N=1$,$N=2$,$N=3$,$N=4$,$N=5$}
\end{loglogaxis}
\end{tikzpicture}
\label{subfig:stoneleycentral}
}
\subfloat[Penalty fluxes, fitted interface]{
\begin{tikzpicture}
\begin{loglogaxis}[
    legend cell align=left,
    legend style={legend pos=south east, font=\tiny},
    width=.475\textwidth,
    xlabel={Mesh size $h$},
    ylabel={$L^2$ errors}, 
    xmin=.025, xmax=1,
    ymin=1e-11, ymax=1.5,
    xmajorgrids=true,
    ymajorgrids=true,
    grid style=dashed,
] 

\addplot[color=blue,mark=*,semithick, mark options={fill=markercolor}]
coordinates{(0.5,0.291816)(0.25,0.110431)(0.125,0.0409099)(0.0625,0.0140493)};
\logLogSlopeTriangleFlip{.4}{0.125}{0.85}{1.54}{blue}

\addplot[color=red,mark=square*,semithick, mark options={fill=markercolor}]
coordinates{(0.5,0.0360738)(0.25,0.00627358)(0.125,0.00107612)(0.0625,0.0001848)};
\logLogSlopeTriangleFlip{.4}{0.125}{0.675}{2.54}{red}

\addplot[color=black,mark=triangle*,semithick, mark options={fill=markercolor}]
coordinates{(0.5,0.00234802)(0.25,0.000185264)(0.125,1.51976e-05)(0.0625,1.28801e-06)};
\logLogSlopeTriangleFlip{.4}{0.125}{0.485}{3.56}{black}

\addplot[color=magenta,mark=diamond*,semithick, mark options={fill=markercolor}]
coordinates{(0.5,0.000142893)(0.25,5.99393e-06)(0.125,2.55178e-07)(0.0625,1.10265e-08)};
\logLogSlopeTriangleFlip{.4}{0.125}{0.31}{4.53}{magenta}

\addplot[color=violet,mark=pentagon*,semithick, mark options={fill=markercolor}]
coordinates{(0.5,6.55362e-06)(0.25,1.31822e-07)(0.125,2.7319e-09)(0.0625,5.82544e-11)};
\logLogSlopeTriangleFlip{0.4}{0.125}{0.11}{ 5.55}{violet}
\legend{$N=1$,$N=2$,$N=3$,$N=4$,$N=5$}
\end{loglogaxis}
\end{tikzpicture}
\label{subfig:stoneley1}
}\\
\subfloat[Central flux, non-fitted interface]{
\begin{tikzpicture}
\begin{loglogaxis}[
    legend cell align=left,
    legend style={legend pos=south east, font=\tiny},
    width=.475\textwidth,
    xlabel={Mesh size $h$},
    xmin=.025, xmax=1,
    ymin=1e-2, ymax=1,
    xmajorgrids=true,
    ymajorgrids=true,
    grid style=dashed,
] 

\addplot[color=blue,mark=*,semithick, mark options={fill=markercolor}]
coordinates{(0.5,0.593576)(0.25,0.369792)(0.125,0.213907)(0.0625,0.130846)};
\addplot[color=red,mark=square*,semithick, mark options={fill=markercolor}]
coordinates{(0.5,0.245724)(0.25,0.170804)(0.125,0.118617)(0.0625,0.0830996)};
\addplot[color=black,mark=triangle*,semithick, mark options={fill=markercolor}]
coordinates{(0.5,0.142495)(0.25,0.102073)(0.125,0.0727388)(0.0625,0.0514866)};
\addplot[color=magenta,mark=diamond*,semithick, mark options={fill=markercolor}]
coordinates{(0.5,0.160473)(0.25,0.113053)(0.125,0.0790769)(0.0625,0.0551781)};
\logLogSlopeTriangle{0.425}{0.125}{0.325}{1/2}{black}

\legend{$N=1$,$N=2$,$N=3$,$N=4$}
\end{loglogaxis}
\end{tikzpicture}
\label{subfig:stoneleycentralnonfitted}
}
\subfloat[Penalty flux, non-fitted interface]{
\begin{tikzpicture}
\begin{loglogaxis}[
    legend cell align=left,
    legend style={legend pos=south east, font=\tiny},
    width=.475\textwidth,
    xlabel={Mesh size $h$},
    xmin=.025, xmax=1,
    ymin=1e-2, ymax=1,
    xmajorgrids=true,
    ymajorgrids=true,
    grid style=dashed,
] 

\addplot[color=blue,mark=*,semithick, mark options={fill=markercolor}]
coordinates{(0.5,0.1507)(0.25,0.0796)(0.125,0.0495)(0.0625,0.0331)}; 
\addplot[color=red,mark=square*,semithick, mark options={fill=markercolor}]
coordinates{(0.5,0.0820)(0.25,0.0594)(0.125,0.0427)(0.0625, 0.0307)};
\addplot[color=black,mark=triangle*,semithick, mark options={fill=markercolor}]
coordinates{(0.5, 0.0640)(0.25,0.0435)(0.125,0.0306)(0.0625,0.0216)};
\addplot[color=magenta,mark=diamond*,semithick, mark options={fill=markercolor}]
coordinates{(0.5, 0.0593)(0.25, 0.0436)(0.125, 0.0318)(0.0625,0.0229)};
\logLogSlopeTriangle{0.425}{0.125}{0.15}{1/2}{black}

\legend{$N=1$,$N=2$,$N=3$,$N=4$}
\end{loglogaxis}
\end{tikzpicture}
\label{subfig:stoneley2}
}
\caption{Convergence of WADG for a Stoneley wave using a fitted mesh aligned with the interface (Figures~\ref{subfig:stoneleycentral} and \ref{subfig:stoneley1}) and a non-fitted mesh where the interface does not lie exactly on an element boundary (Figure~\ref{subfig:stoneleycentralnonfitted} and \ref{subfig:stoneley2}). }
\label{fig:stoneley}
\end{figure}
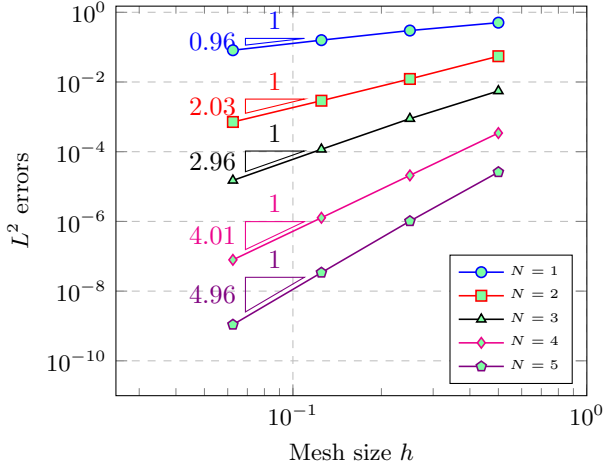
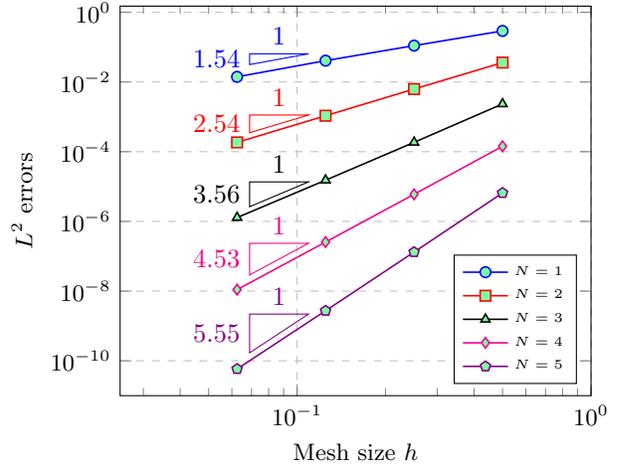
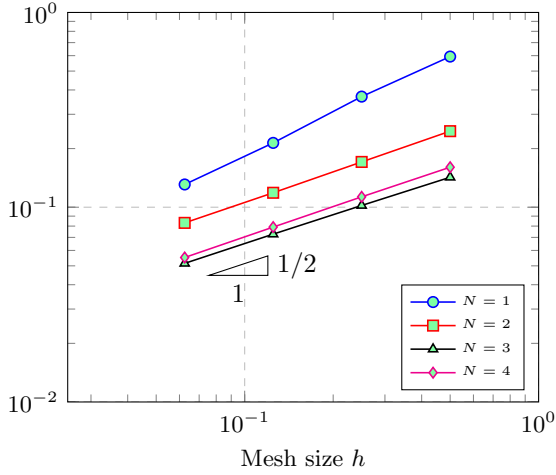
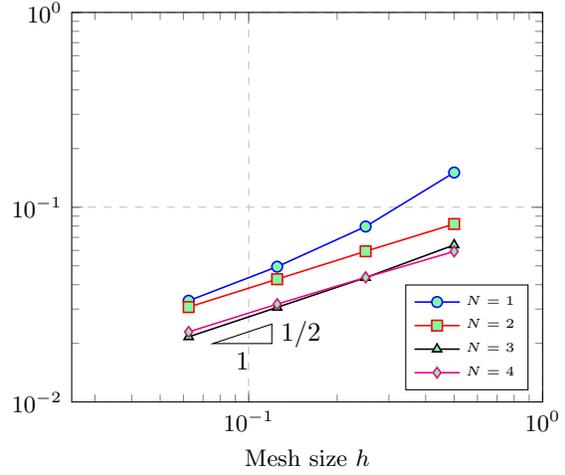

\subsubsection{Convergence to \reviewerOne{manufactured} and reference solutions}

\reviewerOne{To check the accuracy of the method for problems with smoothly varying heterogeneous media, we follow \cite{mercerat2015nodal} and consider a manufactured solution.  We assume isotropic media, and incorporate variations into the stiffness matrix $\bm{C}$ by taking $\lambda$ such that
\[
\lambda(x,y) = \lambda_0 + \tilde{\lambda}(x,y),
\]
where $\lambda_0$ is a constant.  We assume the displacement solution $\bm{u}$ is given as a plane wave 
\[
u_1(x,y,t) = \cos\LRp{k (x- c_P t)}, \qquad u_2(x,y,t) = \cos\LRp{k (x- c_S t)},
\]
where $c_P = \sqrt{(2\mu+\lambda_0)/\rho}$ and $c_S = \sqrt{\mu/\rho}$ are the P- and S-wave velocities corresponding to Lame parameters $\mu, \lambda_0$.  

This plane wave is the solution of the homogeneous elastic wave equations with $\tilde{\lambda}(x,y) = 0$.  However, this is not true if $\tilde{\lambda}(x,y) \neq 0$ varies spatially.  In order to test the convergence of our method when $\bm{C}$ contains smoothly varying coefficients, we modify our equations by adding source terms $\bm{f}_{\bm{\sigma}}$ such that the plane wave solution satisfies
\begin{align}
\rho \pd{\bm{v}}{t} &= \sum_{i=1}^d \bm{A}_i^T \pd{\bm{\sigma}}{\bm{x}_i}\nonumber\\
\bm{C}^{-1} \pd{\bm{\sigma}}{t} &= \sum_{i=1}^d \bm{A}_i \pd{\bm{v}}{\bm{x}_i} + \bm{f}_{\bm{\sigma}}.
\label{eq:symelas}
\end{align}
Using the fact that the plane wave is the solution to the homogeneous equations, it is straightforward to show that the source terms are 
\[
\bm{f}_{\bm{\sigma}} = -\tilde{\lambda} \Grad\cdot\bm{v}\left(\begin{array}{c}1\\1\\0\end{array}\right), 
\]
where $\bm{v} = \pd{\bm{u}}{t}$ is the velocity of the exact plane wave solution.  These source terms are computed using the same quadrature rule used for WADG.  

Figure~\ref{fig:manufactured} shows the convergence of $L^2$ errors for a plane wave manufactured solution with $k = \pi$.  We set $\rho = 1, \mu = 1, \lambda_0 = 2$, and $\tilde{\lambda}(x,y) = \frac{1}{2}\sin(2\pi x)\sin(2\pi y)$, and compute errors at final time $T = 5$ for $N=1,\ldots,5$.  We observe that the $L^2$ errors convergence at a rate between the theoretical $O(h^{N+1/2})$ and optimal $O(h^{N+1})$ rates for the penalty flux with $\tau_{\bm{v}} = \tau_{\bm{\sigma}} = 1$.  For central fluxes, we observe an even-odd pattern of convergence, with rates near $O(h^N)$ for $N$ odd and $O(h^{N+1})$ for $N$ even.   
}

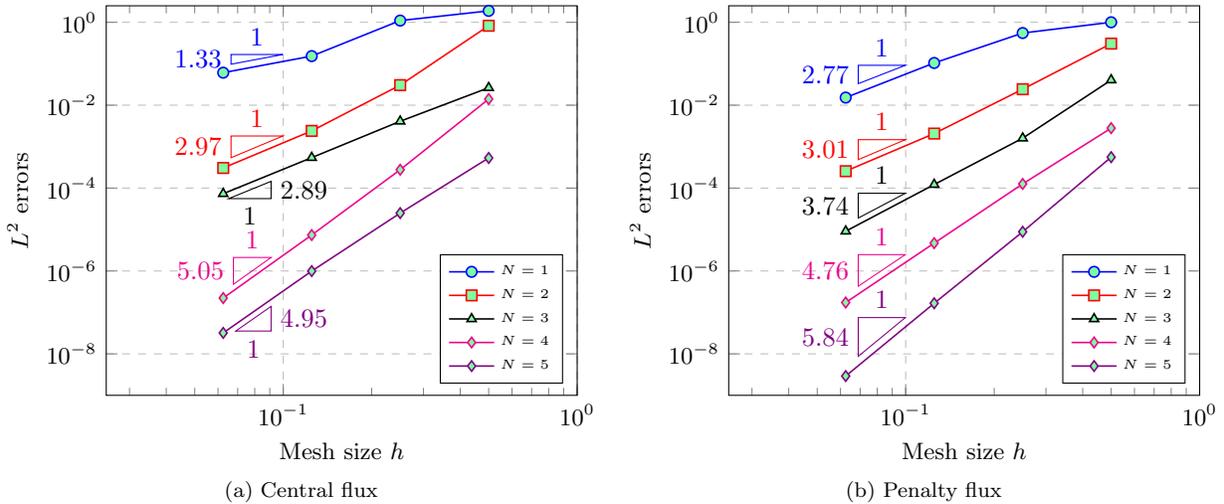
\begin{figure}
\centering
\subfloat[\reviewerOne{Central flux}]{
\begin{tikzpicture}
\begin{loglogaxis}[
    legend cell align=left,
    legend style={legend pos=south east, font=\tiny},
    width=.475\textwidth,
    xlabel={Mesh size $h$},
    ylabel={$L^2$ errors}, 
    xmin=.025, xmax=1,
    ymin=1e-9, ymax=2.5,
    xmajorgrids=true,
    ymajorgrids=true,
    grid style=dashed,
] 


\addplot[color=blue,mark=*,semithick, mark options={fill=markercolor}]
coordinates{(0.5,1.88416)(0.25,1.09941)(0.125,0.152765)(0.0625,0.0608485)};
\logLogSlopeTriangleFlip{0.375}{0.11}{0.85}{1.33}{blue}

\addplot[color=red,mark=square*,semithick, mark options={fill=markercolor}]
coordinates{(0.5,0.825136)(0.25,0.0304815)(0.125,0.00239865)(0.0625,0.000305336)};
\logLogSlopeTriangleFlip{0.375}{0.11}{0.61}{2.97}{red}

\addplot[color=black,mark=triangle*,semithick, mark options={fill=markercolor}]
coordinates{(0.5,0.0263573)(0.25,0.00408897)(0.125,0.000542327)(0.0625,7.31536e-05)};
\logLogSlopeTriangle{0.35}{0.09}{0.505}{2.89}{black}

\addplot[color=magenta,mark=diamond*,semithick, mark options={fill=markercolor}]
coordinates{(0.5,0.0140713)(0.25,0.000277044)(0.125,7.36781e-06)(0.0625,2.22198e-07)};
\logLogSlopeTriangleFlip{0.35}{0.08}{0.285}{5.05}{magenta}

\addplot[color=violet,mark=diamond*,semithick, mark options={fill=markercolor}]
coordinates{(0.5,0.000532981)(0.25,2.49055e-05)(0.125,9.93481e-07)(0.0625,3.20304e-08)};
\logLogSlopeTriangle{0.35}{0.075}{0.165}{4.95}{violet}

\legend{$N=1$,$N=2$,$N=3$,$N=4$,$N=5$}
\end{loglogaxis}
\end{tikzpicture}
}
\subfloat[\reviewerOne{Penalty flux}]{
\begin{tikzpicture}
\begin{loglogaxis}[
    legend cell align=left,
    legend style={legend pos=south east, font=\tiny},
    width=.475\textwidth,
    xlabel={Mesh size $h$},
    ylabel={$L^2$ errors}, 
    xmin=.025, xmax=1,
    ymin=1e-9, ymax=2.5,
    xmajorgrids=true,
    ymajorgrids=true,
    grid style=dashed,
] 

\addplot[color=blue,mark=*,semithick, mark options={fill=markercolor}]
coordinates{(0.5,1.00045)(0.25,0.550577)(0.125,0.104337)(0.0625,0.0152592)};
\logLogSlopeTriangleFlip{0.375}{0.1}{0.8}{2.77}{blue}

\addplot[color=red,mark=square*,semithick, mark options={fill=markercolor}]
coordinates{(0.5,0.305771)(0.25,0.0242762)(0.125,0.00206544)(0.0625,0.000255504)};
\logLogSlopeTriangleFlip{0.375}{0.1}{0.605}{3.01}{red}

\addplot[color=black,mark=triangle*,semithick, mark options={fill=markercolor}]
coordinates{(0.5,0.0404346)(0.25,0.00159769)(0.125,0.000121514)(0.0625,9.09561e-06)};
\logLogSlopeTriangleFlip{0.375}{0.1}{0.455}{3.74}{black}

\addplot[color=magenta,mark=diamond*,semithick, mark options={fill=markercolor}]
coordinates{(0.5,0.00279312)(0.25,0.000126062)(0.125,4.69835e-06)(0.0625,1.73129e-07)};
\logLogSlopeTriangleFlip{0.375}{0.1}{0.28}{4.76}{magenta}

\addplot[color=violet,mark=diamond*,semithick, mark options={fill=markercolor}]
coordinates{(0.5,0.00055782)(0.25,8.82004e-06)(0.125,1.66264e-07)(0.0625,2.91186e-09)};
\logLogSlopeTriangleFlip{0.375}{0.1}{0.10}{5.84}{violet}

\legend{$N=1$,$N=2$,$N=3$,$N=4$,$N=5$}
\end{loglogaxis}
\end{tikzpicture}
}
\caption{\reviewerOne{Convergence of WADG under mesh refinement to a manufactured solution with smoothly varying heterogeneous media for $N=1,\ldots,5$.}}
\label{fig:manufactured}
\end{figure}

We also examine the accuracy of the WADG method for smoothly varying heterogeneous media by comparing against a \reviewerOne{reference spectral element method solution of degree $N = 50$} on a unit square $[-1,1]^2$.  \reviewerOne{Zero traction} boundary conditions are enforced weakly through numerical fluxes \cite{wilcox2010high}.  We use a heterogeneous isotropic medium with $\rho,\lambda, \mu$ set to
\[
\rho(\bm{x}) = 1, \qquad \lambda(\bm{x}) = 1 + .25\sin(\pi x)\sin(\pi y), \qquad \mu(\bm{x}) = 1 + .25\cos(\pi x) \cos(\pi y).  
\]
Initial stresses are set to zero, while the initial velocity is set to 
\[
v_1(\bm{x},0) = \cos(\pi x)\sin(\pi y), \qquad v_2(\bm{x},0) = -\sin(\pi x)\cos(\pi y).
\]
Figure~\ref{fig:refsol} shows $L^2$ errors with respect to the reference solution at time $T=1/2$ for different mesh sizes and orders of approximation.  Computed convergence rates fall between the optimal $O(h^{N+1})$ and predicted $O(h^{N+1/2})$ when using the penalty flux with penalty parameters set to $1$.  
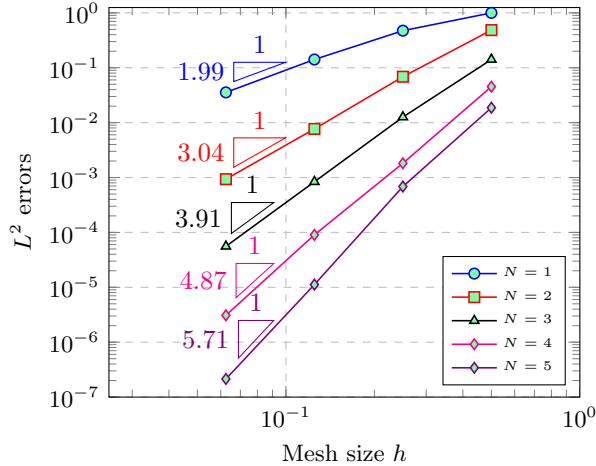
\begin{figure}
\centering
\begin{tikzpicture}
\begin{loglogaxis}[
    legend cell align=left,
    legend style={legend pos=south east, font=\tiny},
    width=.475\textwidth,
    xlabel={Mesh size $h$},
    ylabel={$L^2$ errors}, 
    xmin=.025, xmax=1,
    ymin=1e-7, ymax=1.25,
    xmajorgrids=true,
    ymajorgrids=true,
    grid style=dashed,
] 

\addplot[color=blue,mark=*,semithick, mark options={fill=markercolor}]
coordinates{(0.5,1)(0.25,0.473741)(0.125,0.141342)(0.0625,0.0355409)};    
\logLogSlopeTriangleFlip{0.375}{0.11}{0.81}{1.99}{blue}

\addplot[color=red,mark=square*,semithick, mark options={fill=markercolor}]
coordinates{(0.5,0.484875)(0.25,0.0686547)(0.125,0.00767686)(0.0625,0.000933392)};   
\logLogSlopeTriangleFlip{0.375}{0.11}{0.59}{3.04}{red}

\addplot[color=black,mark=triangle*,semithick, mark options={fill=markercolor}]
coordinates{(0.5,0.141415)(0.25,0.0126263)(0.125,0.0008361)(0.0625,5.57276e-05)};    
\logLogSlopeTriangleFlip{0.35}{0.09}{0.42}{3.91}{black}

\addplot[color=magenta,mark=diamond*,semithick, mark options={fill=markercolor}]
coordinates{(0.5,0.0452398)(0.25,0.00182005)(0.125,9.0346e-05)(0.0625,3.08518e-06)};   
\logLogSlopeTriangleFlip{0.35}{0.08}{0.255}{4.87}{magenta}

\addplot[color=violet,mark=diamond*,semithick, mark options={fill=markercolor}]
coordinates{(0.5,0.0186937)(0.25,0.000684602)(0.125,1.12013e-05)(0.0625,2.13796e-07)};    
\logLogSlopeTriangleFlip{0.35}{0.075}{0.1}{5.71}{violet}

\legend{$N=1$,$N=2$,$N=3$,$N=4$,$N=5$}
\end{loglogaxis}
\end{tikzpicture}
\caption{Convergence of WADG under mesh refinement to a reference $N=50$ spectral method solution with smoothly varying heterogeneous media for $N=1,\ldots,5$ using a penalty flux.}
\label{fig:refsol}
\end{figure}

\reviewerOne{
\subsubsection{Curvilinear meshes}

We now present numerical experiments verifying the stability and accuracy of the formulation presented in Section~\ref{sec:curvi} for curvilinear meshes.  We use isoparametric mappings in the following experiments, where the mapping from the reference element to each physical element is a polynomial of degree $N$.  We construct these mappings by following \cite{hesthaven2007nodal}.  Starting from a uniform triangular mesh on the Lamb wave problem domain $\Omega = [0,2]\times [0,1]$, we place high order Warp and Blend interpolation nodes on each element \cite{warburton2006explicit}.  The physical locations $(x_i,y_i)$ (for $i = 1,\ldots,N_p K$) of these nodes are then perturbed to produce new nodal positions $(\tilde{x}_i,\tilde{y}_i)$ where
\[
\tilde{x}_i = x_i + \frac{1}{10} \cos\LRp{\frac{\pi}{2}x} \cos\LRp{3\pi y}, \qquad 
\tilde{y}_i = y_i + \frac{1}{20} \sin(\pi x) \cos(3\pi y).
\]
These new nodal positions $(\tilde{x}_i,\tilde{y}_i)$ now define a coordinate mapping from the reference element to a curved physical element, producing the warped mesh in Figure~\ref{fig:curvieig}.  This mesh warping is constructed such that the $x$ and $y$ deformations of each element are of roughly the same magnitude, while leaving the positions of nodes on the boundary unchanged.  

Figure~\ref{fig:curvieig} shows eigenvalues of the DG discretization matrix for $N=3$ for both a uniform (affine) mesh and a warped curvilinear mesh.  We use the quadrature-based skew-symmetric formulation introduced in Section~\ref{sec:curvi}, and consider both central and penalty fluxes (with penalty parameters set uniformly to $1$).  We observe that for both the central and penalty fluxes, all eigenvalues contain non-positive real parts (up to machine precision), indicating that the semi-discrete system is energy stable.  The introduction of the curvilinear warping appears to result in a magnification of the real and imaginary parts of larger magnitude eigenvalues.  
\begin{figure}
\centering
\subfloat[$\tau_{\bm{v}}=\tau_{\bm{\sigma}} = 0$ (central flux)]{\includegraphics[width=.49\textwidth]{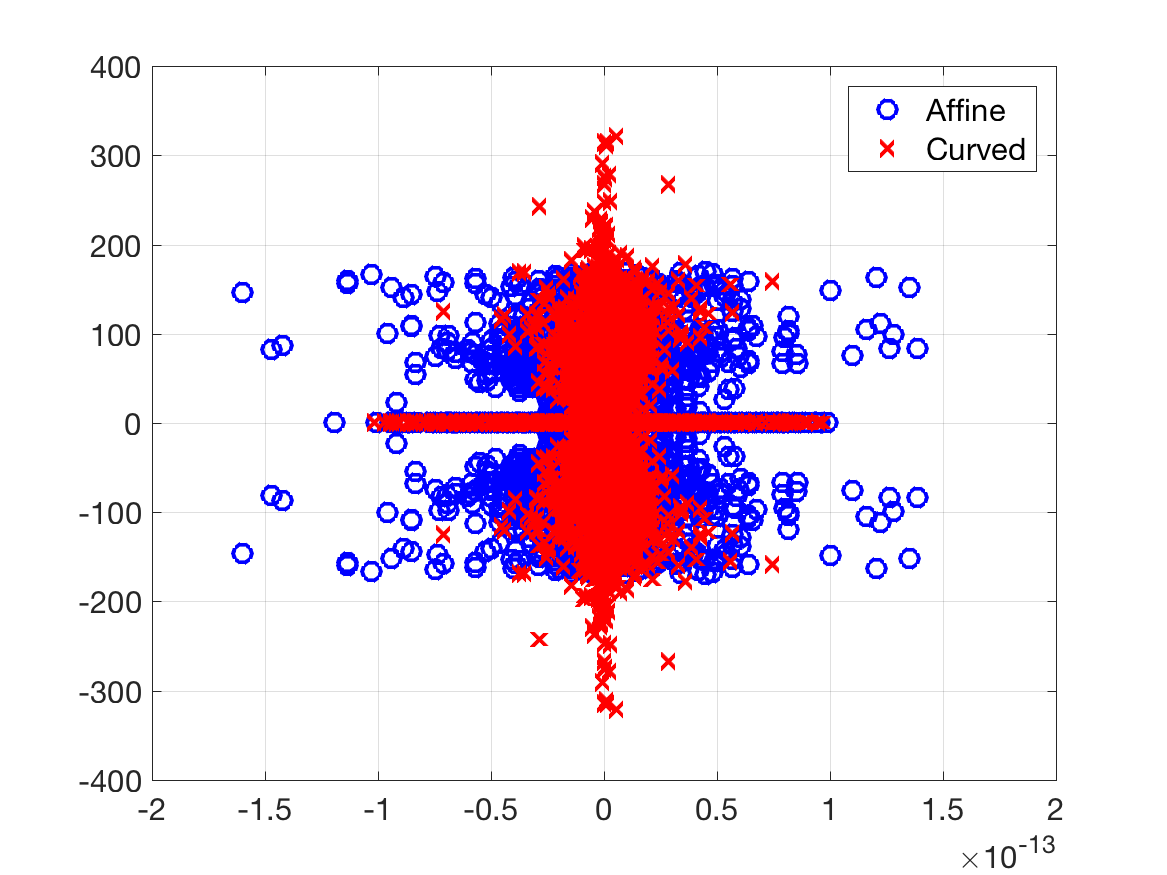}}
\subfloat[$\tau_{\bm{v}}=\tau_{\bm{\sigma}} = 1$ (penalty flux)]{\includegraphics[width=.49\textwidth]{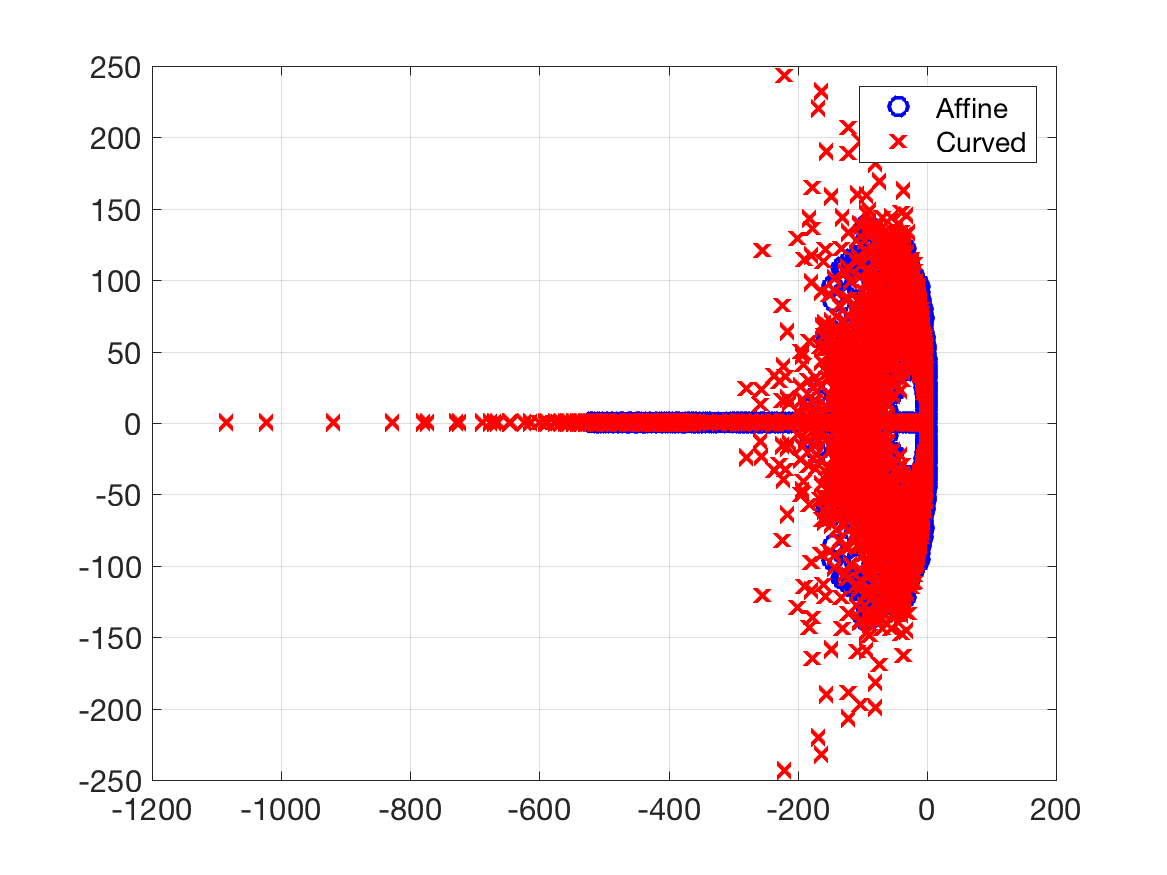}}\\
\subfloat[Warped mesh]{\includegraphics[width=.375\textwidth]{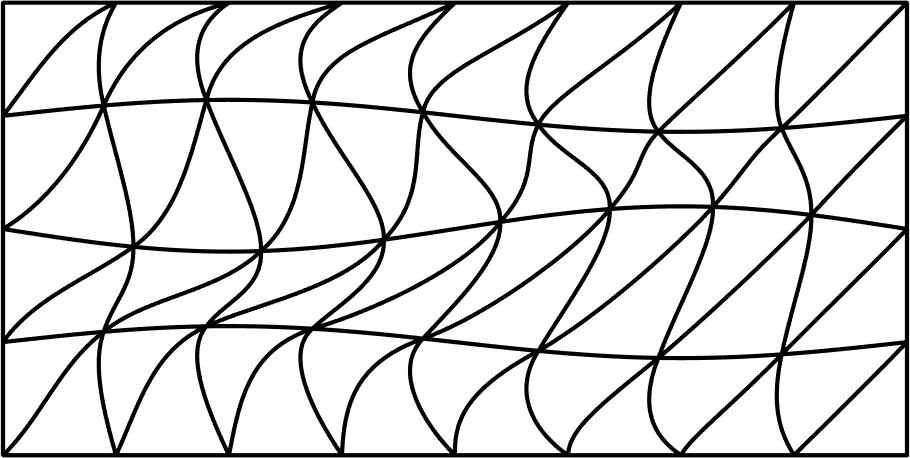}}
\caption{Spectra of the DG discretization matrix for  central and penalty fluxes on a warped curvilinear mesh of degree $N=3$.}
\label{fig:curvieig}
\end{figure}

We also compute $L^2$ errors on a sequence of refined curvilinear meshes for $N=2,\ldots,5$, skipping $N=1$ as it reduces to the affine case.  These curvilinear meshes are constructed using the warping procedure described previously.  Errors for both central and penalty fluxes are shown in Figure~\ref{fig:curvierr}.  We observe rates of convergence of $L^2$ errors which are consistent with the rates observed for affine meshes in Section~\ref{sec:rlamb}.  
}

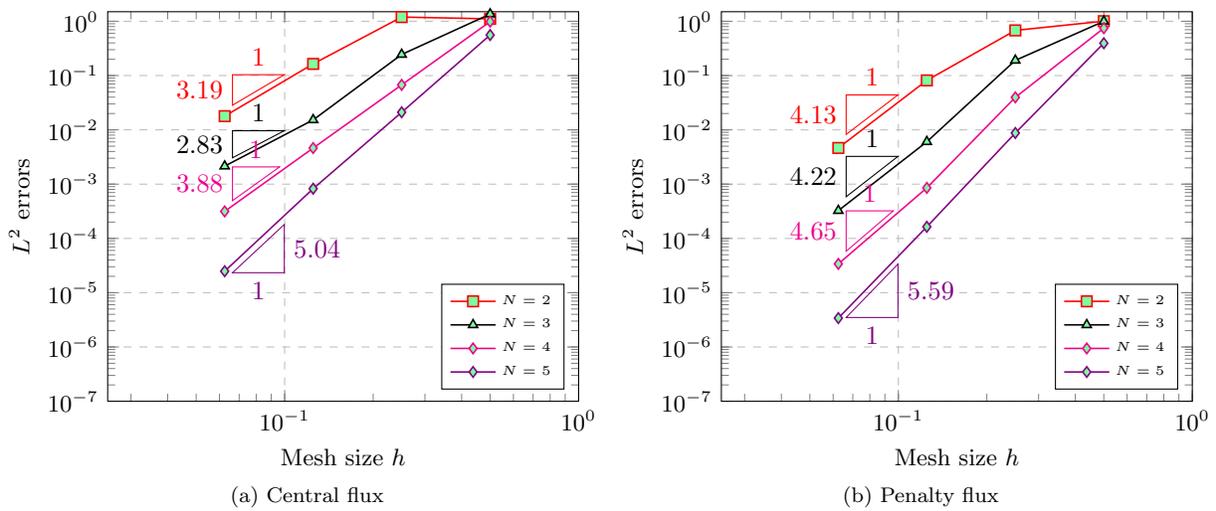
\begin{figure}
\centering
\subfloat[Central flux]{\begin{tikzpicture}
\begin{loglogaxis}[
    legend cell align=left,
    legend style={legend pos=south east, font=\tiny},
    width=.475\textwidth,
    xlabel={Mesh size $h$},
    ylabel={$L^2$ errors}, 
    xmin=.025, xmax=1,
    ymin=1e-7, ymax=1.5,
    xmajorgrids=true,
    ymajorgrids=true,
    grid style=dashed,
] 


\addplot[color=red,mark=square*,semithick, mark options={fill=markercolor}]
coordinates{(0.5,1.10945)(0.25,1.19601)(0.125,0.164019)(0.0625,0.017878)};
\logLogSlopeTriangleFlip{0.375}{0.11}{0.76}{3.19}{red}

\addplot[color=black,mark=triangle*,semithick, mark options={fill=markercolor}]
coordinates{(0.5,1.3575)(0.25,0.244502)(0.125,0.0152567)(0.0625,0.0021426)};
\logLogSlopeTriangleFlip{0.375}{0.11}{0.625}{2.83}{black}

\addplot[color=magenta,mark=diamond*,semithick, mark options={fill=markercolor}]
coordinates{(0.5,0.985545)(0.25,0.0673766)(0.125,0.00462058)(0.0625,0.000314475)};
\logLogSlopeTriangleFlip{0.365}{0.1}{0.515}{3.88}{magenta}

\addplot[color=violet,mark=diamond*,semithick, mark options={fill=markercolor}]
coordinates{(0.5,0.558713)(0.25,0.0210648)(0.125,0.000823737)(0.0625,2.48818e-05)};
\logLogSlopeTriangle{0.375}{0.11}{0.33}{5.04}{violet}

\legend{$N=2$,$N=3$,$N=4$,$N=5$}
\end{loglogaxis}
\end{tikzpicture}
}
\subfloat[Penalty flux]{\begin{tikzpicture}
\begin{loglogaxis}[
    legend cell align=left,
    legend style={legend pos=south east, font=\tiny},
    width=.475\textwidth,
    xlabel={Mesh size $h$},
    ylabel={$L^2$ errors}, 
    xmin=.025, xmax=1,
    ymin=1e-7, ymax=1.5,
    xmajorgrids=true,
    ymajorgrids=true,
    grid style=dashed,
] 

\addplot[color=red,mark=square*,semithick, mark options={fill=markercolor}]
coordinates{(0.5,1.00509)(0.25,0.679352)(0.125,0.0814806)(0.0625,0.00463964)};
\logLogSlopeTriangleFlip{0.375}{0.11}{0.685}{4.13}{red}

\addplot[color=black,mark=triangle*,semithick, mark options={fill=markercolor}]
coordinates{(0.5,0.998103)(0.25,0.190567)(0.125,0.00603429)(0.0625,0.000324808)};
\logLogSlopeTriangleFlip{0.375}{0.11}{0.525}{4.22}{black}

\addplot[color=magenta,mark=diamond*,semithick, mark options={fill=markercolor}]
coordinates{(0.5,0.746305)(0.25,0.0396791)(0.125,0.000852651)(0.0625,3.39164e-05)};
\logLogSlopeTriangleFlip{0.365}{0.1}{0.385}{4.65}{magenta}

\addplot[color=violet,mark=diamond*,semithick, mark options={fill=markercolor}]
coordinates{(0.5,0.394381)(0.25,0.00875065)(0.125,0.000163618)(0.0625,3.40714e-06)};
\logLogSlopeTriangle{0.375}{0.11}{0.215}{5.59}{violet}

\legend{$N=2$,$N=3$,$N=4$,$N=5$}
\end{loglogaxis}
\end{tikzpicture}
}
\caption{Convergence of WADG for the Lamb wave problem on curvilinear meshes.}
\label{fig:curvierr}
\end{figure}

%

\subsection{Application examples}

We next demonstrate the accuracy and flexibility of WADG for several application-based problems in linear elasticity with heterogeneity and anisotropy.  \reviewerOne{All computations are done using penalty parameters $\tau_{\bm{v}} = \tau_{\bm{\sigma}} = 1$ unless specified otherwise.}

\subsubsection{Stiff inclusion}

The stiff inclusion problem  is a common test of methods for linear elastic wave propagation \cite{leveque2002finite, kaser2006arbitrary, appelo2015energy}, where an inclusion with higher wavespeed is embedded within a non-stiff region.  Waves which reach this region of high wavespeed are transmitted through the inclusion, bouncing back and forth within the region.  This vibration then produces waves which propagate outward from the inclusion.  

We solve on a domain $[-1,1]\times [-.5, .5]$ with a rectangular inclusion located at $[-.5,.5]\times[-.1,.1]$.  Outside of the inclusion, material parameters are taken to be 
\[
\rho = 1, \qquad \mu = 1,\qquad \lambda = 2.
\]
Within the inclusion, material parameters are taken to be
\[
\rho = 1, \qquad \mu = 100,\qquad \lambda = 200,
\]
such that the wave speed in the rectangular inclusion is ten times that of the wave speed outside.  A pulse is generated through velocity boundary conditions at $x = -1$
\[
v_1(x,y,t) = \begin{cases}
\sin(\pi t / t_0), & t < t_0\\
0, & t \geq t_0
\end{cases}, \qquad
v_2(x,y,t) = 0.
\]
In our experiments, we take $t_0 = .025$.  Traction free boundary conditions are enforced at all other domain boundaries.  

\begin{figure}
\centering
\subfloat[$\LRb{\bm{\sigma}_{xx}+\bm{\sigma}_{yy}}$]{\includegraphics[width=.475\textwidth]{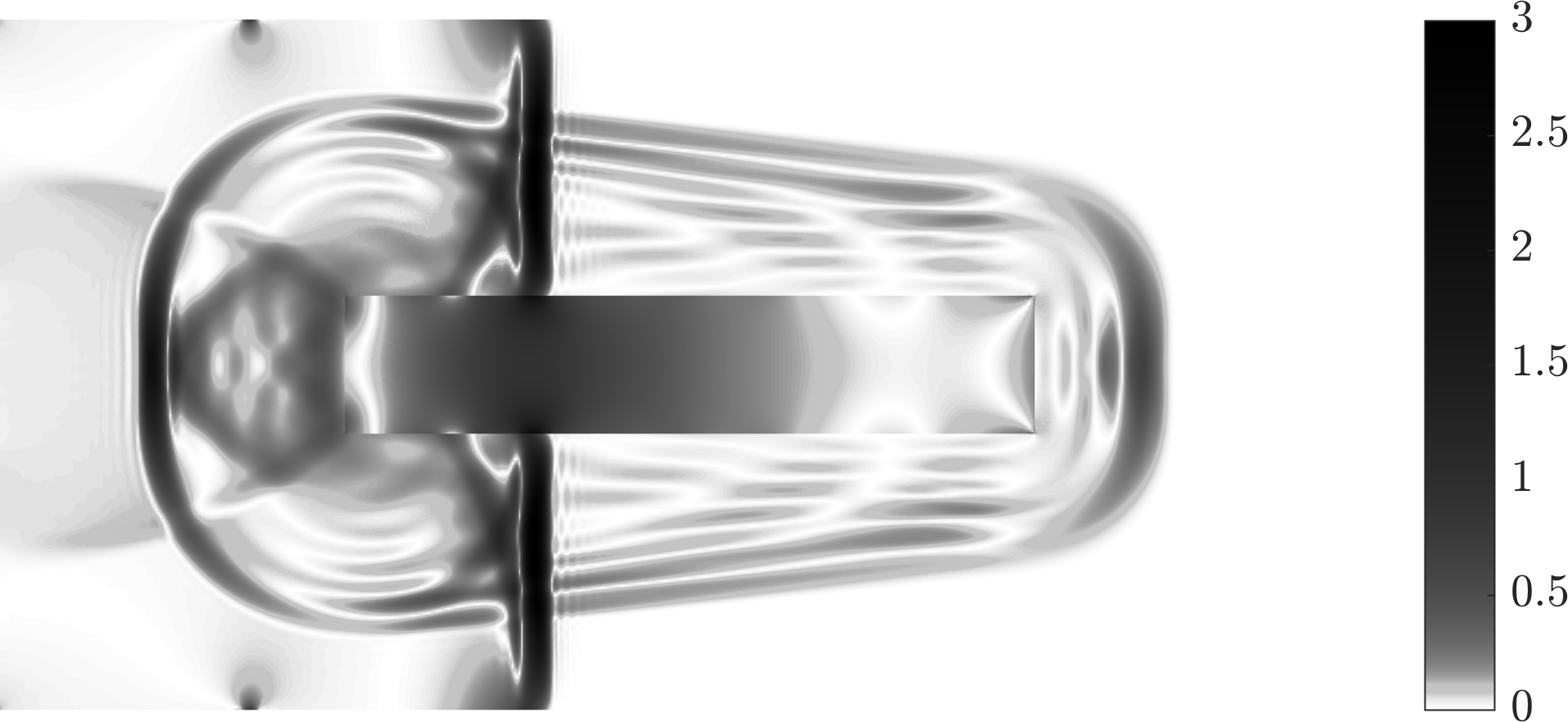}}
\hspace{2em}
\subfloat[$\LRb{\bm{\sigma}_{xy}}$]{\includegraphics[width=.475\textwidth]{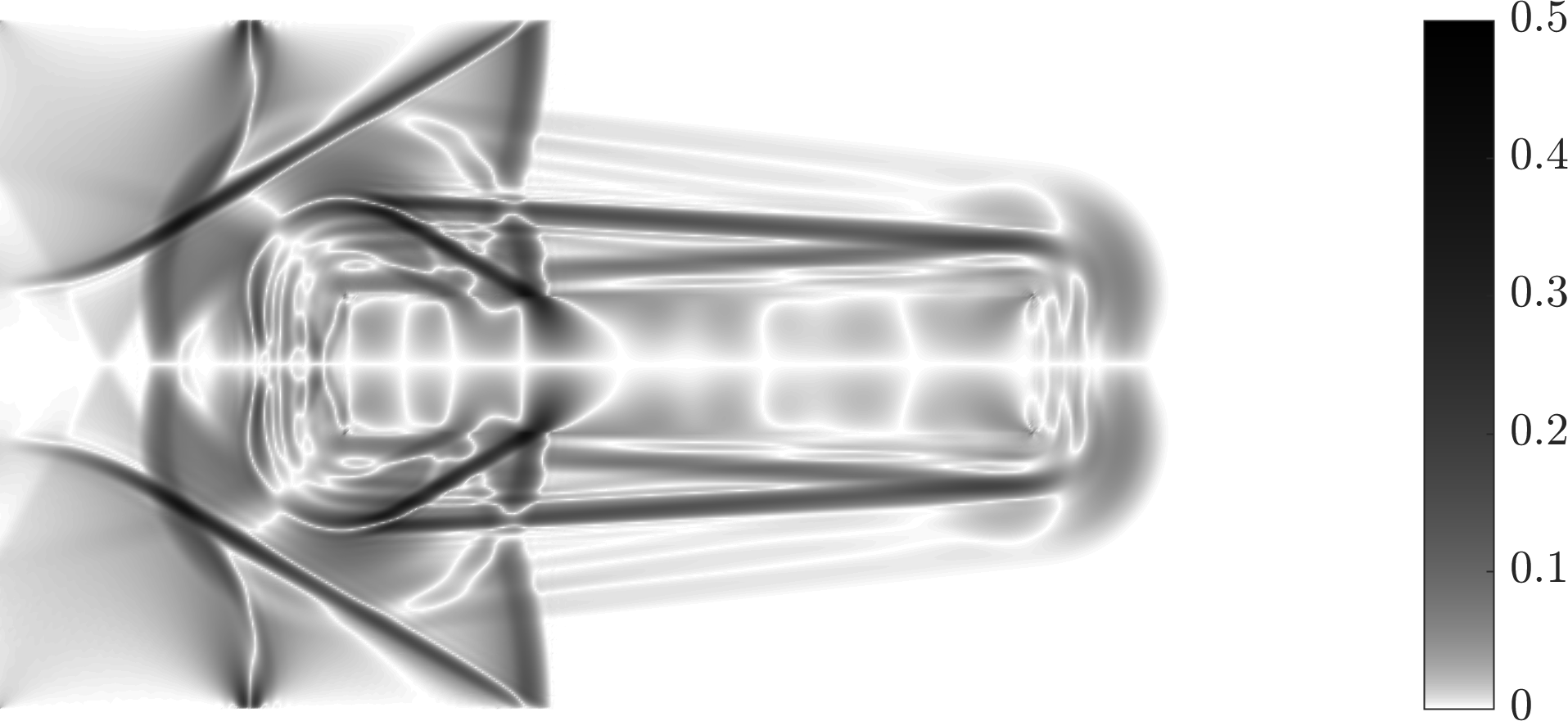}}
\caption{Values of different solution fields at $T=.4$ for the stiff inclusion problem of Leveque \cite{leveque2002finite} with order of approximation $N=5$.}
\label{fig:incl}
\end{figure}


We construct a uniform triangular mesh by dividing each element of a quadrilateral meshes along the diagonal to produce triangular meshes, using $100 \times 50$ quadrilateral elements in the $x$ and $y$ coordinates, respectively.  \reviewerOne{This is the same mesh resolution and polynomial degree used by K\"{a}ser and Dumbser in \cite{kaser2006arbitrary}, and provides roughly the same number of degrees of freedom in the $x$ and $y$ directions as the numerical setup used by \cite{appelo2015energy}. }
Figure~\ref{fig:incl} shows values of $\LRb{\bm{\sigma}_{xx} + \bm{\sigma}_{yy}}$ at final time $T=.4$.  Following the approach \reviewerOne{taken in the literature  \cite{leveque2002finite, kaser2006arbitrary, appelo2015energy}}, a nonlinear color scale is used in order to distinguish small-amplitude waves \reviewerOne{and produce a \textit{schlieren}-style image.}.  The results show qualitatively good agreement with results in the literature.

\subsubsection{Heterogeneous anisotropic material}

We next examine a model wave propagation problem in heterogeneous anisotropy media \cite{carcione1988wave, komatitsch2000simulation, de2007arbitrary}.   The density $\rho = 7100$ is constant over the domain, while the entries of the stiffness matrix $\bm{C}$ are taken to be
\begin{align*}
\bm{C}_{11} &= .165, \quad \bm{C}_{12} = .05, \quad \bm{C}_{22} = .062, \quad \bm{C}_{33} = .0396, \qquad x < 0\\
\bm{C}_{11} &= .165, \quad \bm{C}_{12} = .0858, \quad \bm{C}_{22} = .165, \quad \bm{C}_{33} = .0396, \qquad x > 0,
\end{align*}
with the remaining entries determined by symmetry or set to zero if unspecified.  For $ x < 0$, this corresponds to an anisotropic material, while for $x > 0$, this corresponds to an isotropic material with $\mu = .0396, \lambda = .0858$.

The computational domain is taken to be $[-.32, .32]^2$, and we use $N=5$ and a triangular mesh of $32768$ elements constructed by subdividing a grid of $128\times 128$ uniform quadrilaterals.  \reviewerOne{In order to provide a fair comparison to results in the literature, the degree $N$ and the mesh size are chosen based on the numerical setups used in \cite{komatitsch2000simulation, de2007arbitrary}.  Komatitsch, Barnes and Tromp use a $130\times 130$ grid of uniform quadrilateral elements of degree $N=5$ in \cite{komatitsch2000simulation}.  The authors of \cite{de2007arbitrary} use an unstructured mesh of 37944 elements of degree $N=5$, where the average triangle edge length matches the edge length of the triangles in our mesh.}  Forcing is applied to the $y$-component of the velocity by a Ricker wavelet point source 
\[
f(\bm{x},t) = \LRp{1 - 2(\pi f_0 (t-t_0))^2} e^{-(\pi f_0 (t-t_0))^2}\delta(x-x_0),
\]
where $x_0 = -.02$, $f_0 = .17$, and $t_0 = 1/f_0$.\footnote{All values and units are adapted from \cite{komatitsch2000simulation,de2007arbitrary}, and correspond to units of meters, kg, and microseconds.}  

\reviewerOne{We take the penalty parameters to be $\tau_{\bm{v}} = \tau_{\bm{\sigma}} = 1/2$.  While there is little visual difference between taking the penalty parameters to be $1$ instead of $1/2$, we observe that taking a smaller penalty parameter makes it possible to use a larger timestep ($C_{\rm CFL} = 10$) without blowing up.  Reducing the penalty parameter further does not appear to allow a significant increase in the maximum stable timestep.  This suggests that the naive choice of $\tau_{\bm{v}} = \tau_{\sigma} = 1$ is not optimal with respect to the maximum stable timestep and stiffness of the semi-discrete system, as discussed in Section~\ref{sec:scaling}. }

Figure~\ref{fig:aniso} shows the $y$-component of velocity $\bm{v}_2$ at times $T = 30 \mu s$ (zoomed in) and $T= 60 \mu s$.  Both results show qualitative agreement with reference results from \cite{carcione1988wave, komatitsch2000simulation, de2007arbitrary}.  
\begin{figure}
\centering
\subfloat[$T=30 \mu s$ (zoomed in)]{\includegraphics[height=18em]{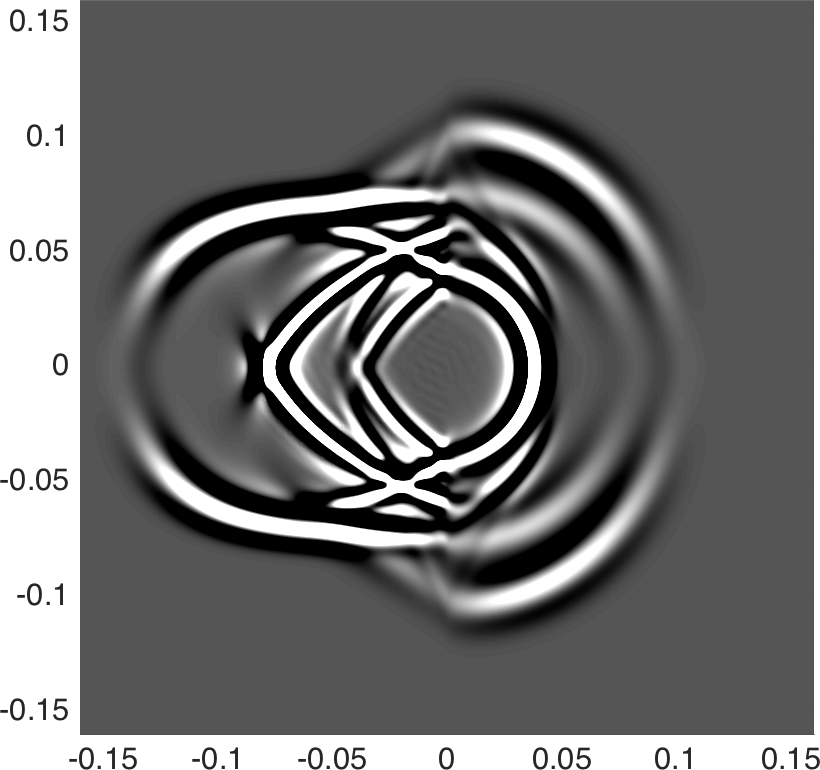}}
\hspace{3em}
\subfloat[$T=60 \mu s$]{\includegraphics[height=18.25em]{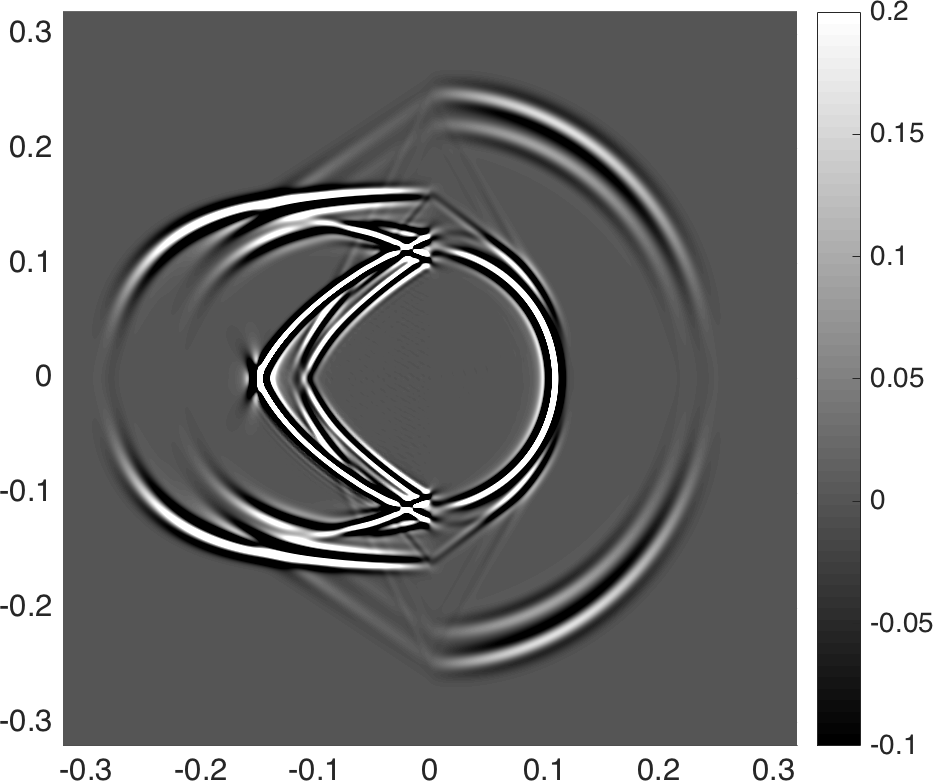}}
\caption{An example of wave propagation in heterogeneous anisotropic media.  The vertical component of the velocity $\bm{v}_2$ is shown at $T = 30$ and $T=60$ microseconds. }
\label{fig:aniso}
\end{figure}

\reviewerOne{
\subsection{A three-dimensional example and computational results}
}
\label{sec:comp}

We now present a three-dimensional example of elastic wave propagation in heterogeneous media with sub-element variations and a discontinuity across an interface.  We consider isotropic elastic wave propagation on the cube $[-.5,.5]^3$ with a discontinuity in material coefficients across $z = 0$
\[
\rho = 1, \qquad \mu(\bm{x}) = \begin{cases}
2 + w(\bm{x}), & z < 0\\
1 + w(\bm{x}), & z > 0
\end{cases},
\qquad
\lambda(\bm{x}) = \begin{cases}
2, & z < 0\\
1, & z > 0
\end{cases}
\]
where $w(\bm{x}) = .5 \cos(3\pi x)\cos(3\pi y)\cos(3\pi z)$.  Forcing is applied to the $x$-component of velocity through a smoothed point source and Ricker wavelet
\[
f(\bm{x},t) = \LRp{1 - 2(\pi f_0 (t-t_0))^2} e^{-(\pi f_0 (t-t_0))^2}e^{- \LRp{a \nor{\bm{x}-\bm{x}_0}}^2}
\]
where $\bm{x}_0 = (0,0,.1)^T$, $a = 100$, $f_0 = 10$, and $t_0 = 1/f_0$.  
\begin{figure}
\centering
\hspace{.5em}
\subfloat[Computational mesh]{\includegraphics[width=.275\textwidth]{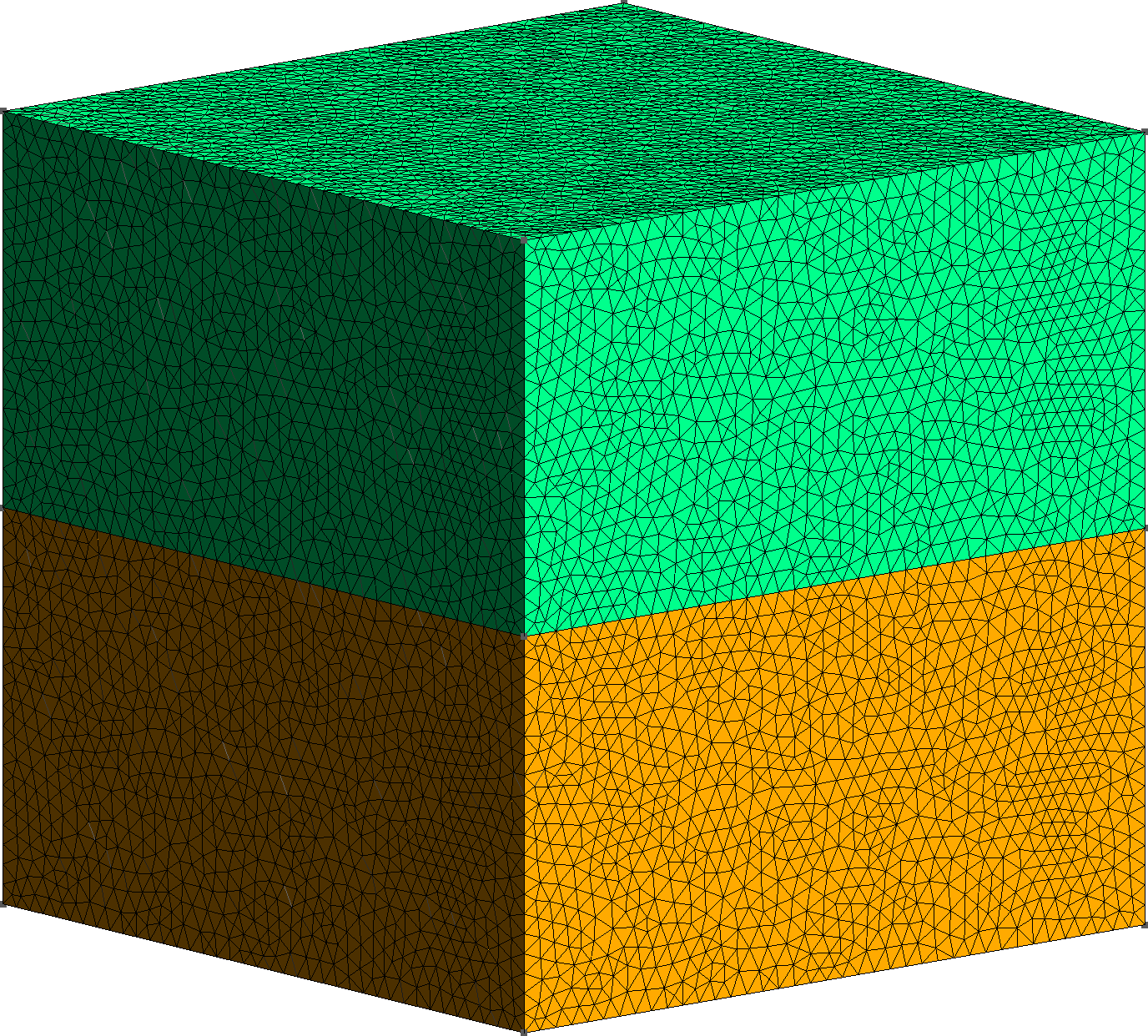}\label{subfig:mesh}}
\hspace{.1em}
\subfloat[Piecewise constant coefficients]{\includegraphics[width=.35\textwidth]{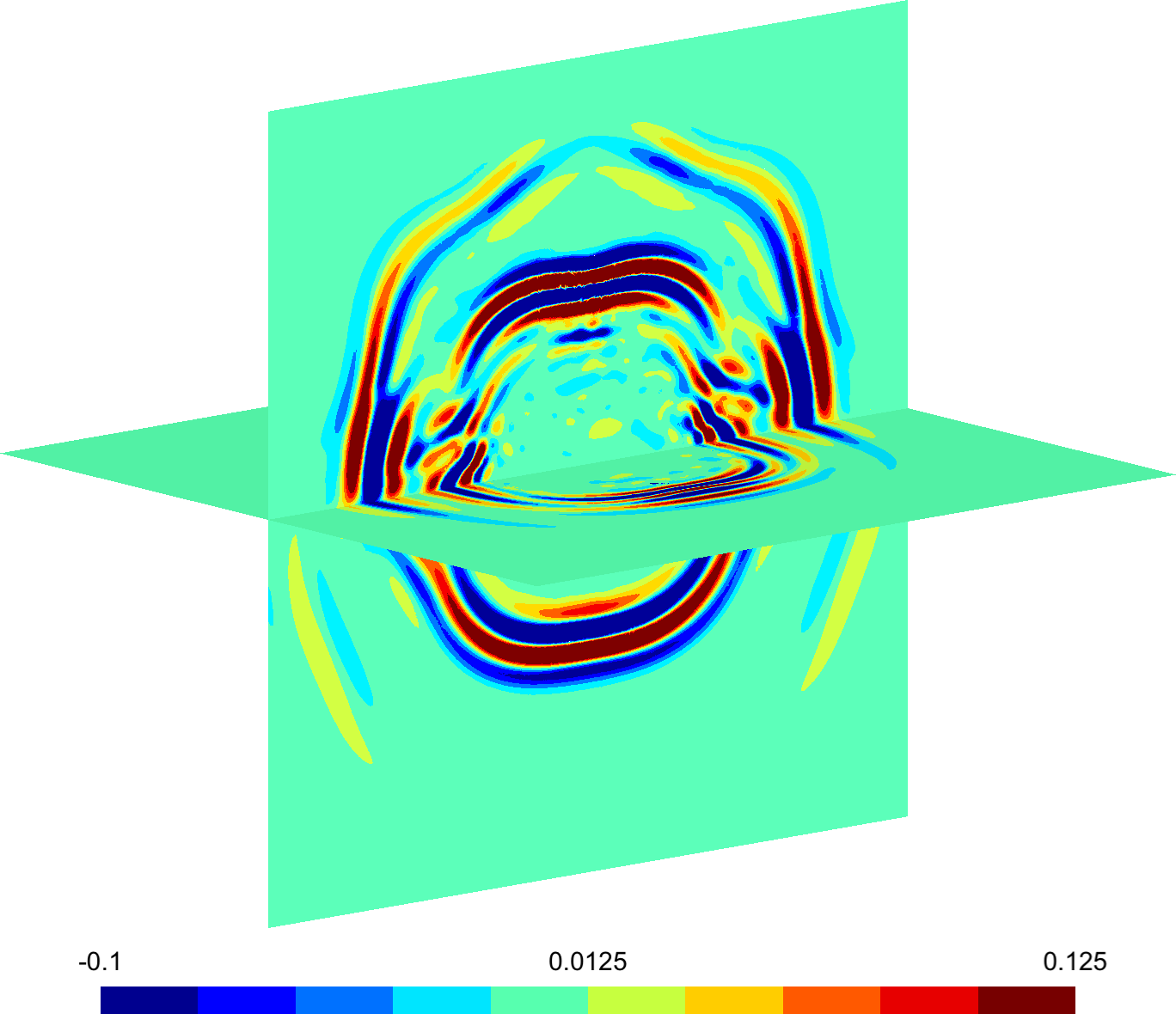}}
\subfloat[Smooth coefficients]{\includegraphics[width=.34\textwidth]{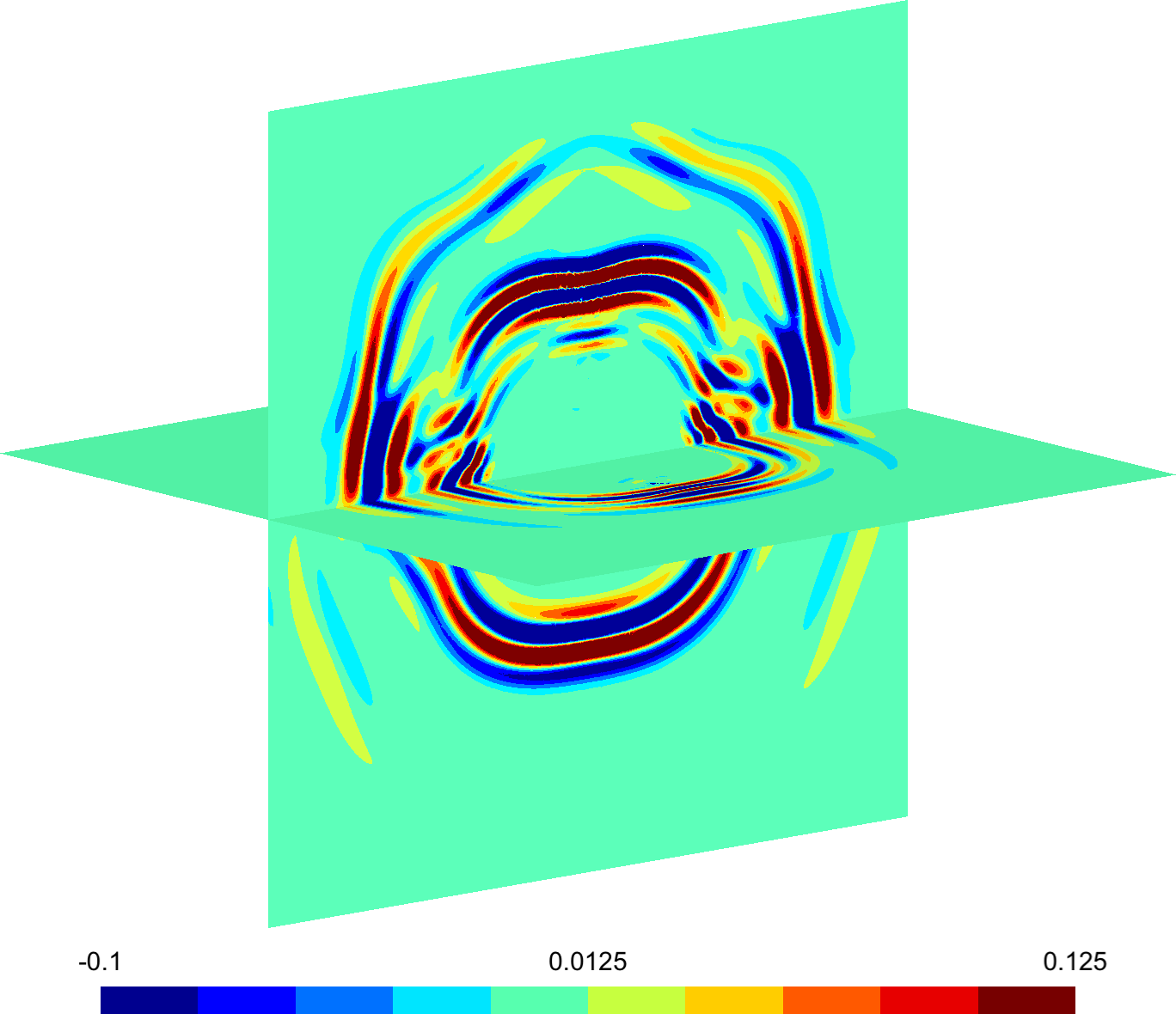}}
\caption{Mesh and $xz$, $xy$ slices of $\bm{v}_1$ at $T = .5$.  The order of approximation is taken to be $N=5$.  }
\label{fig:3d}
\end{figure}
Figure~\ref{fig:3d} shows the $x$-velocity of the computed solution at $T = .5$, and 
Figure~\ref{subfig:mesh} shows the unstructured mesh of 222824 tetrahedral elements of degree $N=5$ used to compute both solutions.  \reviewerOne{In order to capture the discontinuity in material parameters, the elements of this mesh are made to conform to the $z=0$ plane.  The mesh resolution and degree are chosen to resolve the spatial variation of the smoothed point source present in the forcing function.  For piecewise constant coefficients (using the average of each coefficient over an element), spurious reflections are observed in the solution.  When smoothly varying coefficients are resolved within an element using WADG, these spurious reflections disappear. }

\reviewerOne{
These computations are performed on an Nvidia GTX 980 GPU, following the implementation of GPU-accelerated DG methods outlined in \cite{klockner2009nodal}.  This approach breaks the computational work for each time-step into \emph{volume} and \emph{surface} kernels (for the evaluation of the DG formulation) and an \emph{update} kernel (for the application of a time integration method).  In this implementation, we apply the weight-adjusted mass matrix inverse within the update kernel as well.  Strategies for volume and surface kernels follow \cite{klockner2009nodal}, while computational approaches for WADG are outlined in \cite{chan2016weight2}.  

Non-invasive GPU-accelerated implementations of WADG are described in \cite{chan2016weight2}, where kernels for the acoustic wave equation in isotropic media are re-used.  We re-write high order DG methods based on explicit inversion of weighted mass matrices (as done in \cite{mercerat2015nodal, bencomo2015discontinuous}) into a similar non-invasive form with equivalent storage. The semi-discrete form of standard DG yields a system of ODEs over each element
\[
\bm{M}_w\td{\bm{U}}{t} = \bm{A}_h\bm{U},
\]
where $\bm{A}_h\bm{U}$ denotes the evaluation of the DG right hand side for some local vector $\bm{U}$. Multiplying by an un-weighted mass matrix on both sides gives 
\[
\bm{M}^{-1}\bm{M}_w\td{\bm{U}}{t} = \bm{M}^{-1}\bm{A}_h\bm{U},
\]
The right hand side is the same as the right hand side for the case when the weighting function is $w(x) = 1$, and can re-use DG kernels for isotropic wave propagation.   The influence of the spatially varying coefficient is incorporated by inverting the weighted projection matrix $\bm{P}_w = \LRp{\bm{M}^{-1}\bm{M}_w}^{-1} = \bm{M}_w^{-1}\bm{M}$ and applying it to the right hand side. Since the weight $w$ is spatially varying and distinct from element-to-element,  we pre-compute and store $\bm{P}_w$ explicitly over each element prior to time-stepping.  The weight-adjusted DG method is equivalent to replacing the matrices $\bm{M}_w^{-1}\bm{M}$ with the weight-adjusted projection matrix $\bm{M}^{-1}\bm{M}_{1/w}$, which can be applied in a matrix-free fashion as described in Section~\ref{sec:wadgimplement}.  

We now examine computational costs associated with the use of the weight-adjusted DG method.  Computational statistics are computed using the Nvidia profiler \texttt{nvprof}.  We consider first the costs associated with the use of weight-adjusted approximations to scalar weighted mass matrices.  While WADG clearly reduces storage costs associated with high order DG methods, it is less clear how WADG affects computational runtime on accelerator and many-core architectures.  

\begin{table}
    \centering
    \subfloat[Weighted projection matrix $\bm{P}_w$]{
    \begin{tabular}{|c||c|c|c|c|c|c|}
 \hline 
 Batch size $N_{\rm batch}$ & 1 & 2 & 3 & 4 & 5 & 6 \\
 \hhline{|=||=|=|=|=|=|=|}
N = 1 &2.264 &1.224 &0.8927 &0.7514 &0.6976 & $\bm{0.6571}$ \\
 \hline 
N = 2 &3.185 &2.83 &$\bm{2.791}$ &2.823 &2.837 &2.86 \\
 \hline 
N = 3 &$\bm{9.907}$ &10.14 &10.25 &10.16 &10.19 &10.21 \\
 \hline 
N = 4 &$\bm{29.41}$ &29.74 &30.01 &30.03 &30.23 &30.47 \\
 \hline 
N = 5 &74.48 &74.39 &74.16 &74.01 &$\bm{73.88}$ &74.32 \\
 \hline 
 N = 6 & 173.4 & 173.9 & 175.2 & 171 & $\bm{170.5}$ & 172.4 \\ 
 \hline 
N = 7 & $\bm{329.4}$ & 330.4 & 329.6 & 331 & & \\ 
 \hline  
    \end{tabular}
    } 
       
    \subfloat[Weight-adjusted projection $\bm{M}^{-1}\bm{M}_{1/w}$]{
    \begin{tabular}{|c||c|c|c|c|c|c|}
 \hline 
 Batch size $N_{\rm batch}$ & 1 & 2 & 3 & 4 & 5 & 6 \\
 \hhline{|=||=|=|=|=|=|=|} 
N = 1 & 2.382 &1.26 &0.8833 &0.6986 &$\bm{0.5832}$ &0.6534 \\
\hline
N = 2 &3.864 &2.101 &2.495 &1.968 &2.28 &$\bm{1.964}$ \\
\hline
N = 3 &7.092 &$\bm{6.788}$ &6.9 &6.89 &6.888 &6.836 \\
\hline
N = 4 &24.79 &$\bm{22.15}$ &26.63 &24.02 &24.16 & 24 \\
\hline
N = 5 &70.62 &76.6 &61.55 &$\bm{56.35}$ &56.38 &58.98 \\
\hline
N = 6 & 179.8 & 145.7 & $\bm{129.9}$ & 145.7 & 131.9 & 144.1 \\ 
\hline
N = 7 & 411.6 & 412.7 & $\bm{393}$ & 510.3 & & \\ 
\hline
    \end{tabular}
    }        
    \caption{Runtimes (nanoseconds) per element for weighted and weight-adjusted projections as a function of batch size $N_{\rm batch}$.  The lowest runtimes are highlighted in bold.  }
    \label{tab:PKRuntimes}
\end{table}

We compare the application of pre-computed and stored weighted projection matrices $\bm{P}_w$ with a matrix-free application of the weight-adjusted matrix $\bm{M}^{-1}\bm{M}_{1/w}$ using a GPU-accelerated implementation.  As described in \cite{klockner2009nodal, chan2016weight2}, we batch process $N_{\rm batch} \geq 1$ elements within a single kernel workgroup.  Table~\ref{tab:PKRuntimes} displays the average runtime per element for the weighted and weight-adjusted projection kernels as a function of batch size $N_{\rm batch}$ when using a mesh of $50,000$ elements and a quadrature which is exact for polynomials of degree $2N+1$.  

When processing only a single element per batch, WADG is less efficient than weighted projection at all orders.  However, the cost of WADG goes down rapidly with the number of elements per batch.  After optimizing over the batch size, weight-adjusted projection is faster than weighted projection up to $N=6$.  At $N=7$, WADG is slower than weighted projection, as the batch size is limited by the maximum number of active threads.\footnote{In our implementation, the number of active threads per workgroup is the number of quadrature points multiplied by the number of elements per batch.  The batch size is limited by the maximum number of threads in a workgroup ($1024$ for Nvidia GPUs).  For $N = 7$, the quadrature rule of degree $2N+1$ contains $214$ points; the largest batch size we can run is then $4$ elements, as processing $5$ elements per batch requires $1070$ threads.}  We note that, if the strength of quadrature is reduced from $2N+1$ to $2N$, the resulting WADG runtimes are faster than weighted projection at all tested orders, achieving between a $1.5-2.3\times$ speedup for $N=1,\ldots, 7$ while maintaining virtually identical numerical results \cite{chan2016weight1}.  


We can take a closer look at these results using the Nvidia profiler \texttt{nvprof}, looking in particular at the metrics \texttt{gld\_load\_throughput} and \texttt{dram\_read\_throughput}.  Both metrics track data throughput; however, the former includes data fetched from cache, while the latter does not. We fix $N=2$, increase the batch size, and record the output given by \texttt{nvprof} for the weighted projection and weight-adjusted kernels.  Table~\ref{tab:compareWADG} shows that the value of \texttt{dram\_read\_throughput} for the weighted projection kernel is higher than that of the weight-adjusted kernel, implying that more data is streamed through the kernel.  However, the value of \texttt{gld\_load\_throughput} for the weight-adjusted kernel is higher than that of the weighted projection kernel. This indicates that, while the loading of pre-computed and stored weighted projection matrices exploits the high bandwidth available to GPUs, it does not take advantage of cache locality due to the fact that the projection matrices must be loaded separately over each element.  In contrast, the matrix-free implementation of WADG allows the matrices $\bm{V}_q, \bm{P}_q$ to be re-used over multiple elements once loaded into cache.  

\begin{table}
    \centering
\subfloat[\texttt{dram\_read\_throughput}  (GB/s)]{ \begin{tabular}{|c||c|c|c|}
    \hline
  & $N_{\rm batch} = 1$ & $N_{\rm batch} = 3$ & $N_{\rm batch} = 5$\\
 \hhline{|=||=|=|=|} 
     Weighted projection $\bm{P}_w$ & 135.81 &156.23 & 153.13\\
     \hline 
    Weight adjusted projection  $\bm{M}^{-1}\bm{M}_{1/w}$ & 25.041  &38.811& 42.736\\
     \hline     
    \end{tabular}
    }
    
    \subfloat[\texttt{gld\_throughput} (GB/s)]{
    \begin{tabular}{|c||c|c|c|}    
     \hline 
       & $N_{\rm batch} = 1$ & $N_{\rm batch} = 3$ & $N_{\rm batch} = 5$\\
 \hhline{|=||=|=|=|} 
          Weighted projection $\bm{P}_w$  & 353.10 & 411.87 & 410.95\\
     \hline 
    Weight adjusted projection $\bm{M}^{-1}\bm{M}_{1/w}$  & 804.30  & 1e+03 & 1e+03 \\
     \hline
    \end{tabular}
    }
    \caption{Reported Global Load Throughput (\texttt{gld\_throughput}) and Device Memory Read Throughput (\texttt{dram\_read\_throughput}) in GB/s for $N=2$ and various $N_{\rm batch}$.}
    \label{tab:compareWADG}
\end{table}


For $N_{\rm batch} = 1$, the cache efficiency of the weight-adjusted kernel is offset by the increased computational cost of quadrature-based interpolation and projection.  However, increasing $N_{\rm batch}$ to three elements increases both the values of \texttt{gld\_load\_throughput} and \texttt{dram\_read\_throughput}, resulting in a roughly $2\times$ speedup in runtime for the weight-adjusted kernel.  These results show that the low-storage nature of the weight-adjusted kernel frees up bandwidth in exchange for increased computational work, while taking advantage of data locality.  

\pgfplotstableread[row sep=crcr]{
N V S U\\
1   97.1800   82.3310  109.6260\\
2  110.8550   98.2020  118.7820\\
3   97.5430  118.9640   85.9900\\
4   66.3910  124.3630   58.6750\\
5   47.1200  120.1480   52.1000\\
6   32.8960   90.8150   44.1840\\
7   23.0840   81.6090   25.0326\\
}\BW

\pgfplotstableread[row sep=crcr]{
N V S U\\
                         1           228.07993286888          219.086635170504          113.790994927898\\
                         2          585.427326232541          272.951627469766          255.347833123331\\
                         3          1083.76569187859          424.354263904029          565.324933321158\\
                         4          1300.21151151178          580.406327594429          652.265626483218\\
                         5          1459.76972928974          722.829057153327          775.095713662755\\
                         6          1519.01507138031          729.815340586669          1107.72548427317\\
                         7           1564.8631276467          820.602768015583          1197.77303361906\\
}\GFLOPS

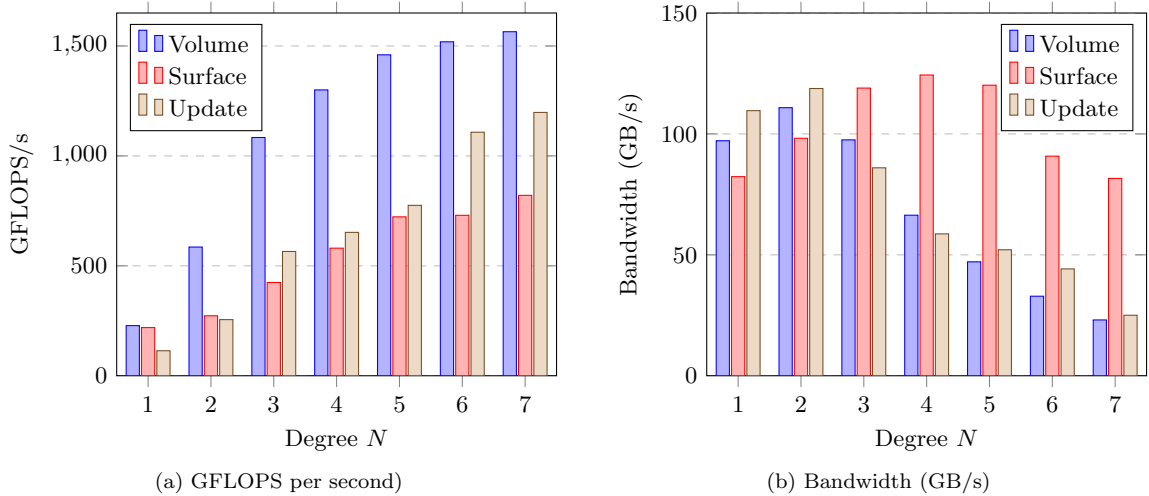
\begin{figure}
\centering
\subfloat[GFLOPS per second)]{
\begin{tikzpicture}
\begin{axis}[
	width=.45\textwidth,
	legend cell align=left,
	xlabel={Degree $N$},
	ylabel={GFLOPS/s},
xmin=.5, xmax=7.5,
	ymin=0,ymax=1650,
        ybar=2*\pgflinewidth,
        bar width=5pt,
	xtick={1,2,3,4,5,6,7},
		legend pos=north west,
	ymajorgrids=true,
	grid style=dashed,
] 
\addplot table[x=N, y=V] from \GFLOPS;
\addplot table[x=N, y=S] from \GFLOPS;
\addplot table[x=N, y=U] from \GFLOPS;
\legend{Volume, Surface, Update}
\end{axis}
\end{tikzpicture}
}
\hspace{1em}
\subfloat[Bandwidth (GB/s)]{
\begin{tikzpicture}
\begin{axis}[
	width=.45\textwidth,
	legend cell align=left,
	xlabel={Degree $N$},
	ylabel={Bandwidth (GB/s)},
xmin=.5, xmax=7.5,
	ymin=0,ymax=150,
             ybar=2*\pgflinewidth,
             bar width=5pt,
	xtick={1,2,3,4,5,6,7},
	legend pos=north east,
	ymajorgrids=true,
	grid style=dashed,
] 
\addplot table[x=N, y=V] from \BW;
\addplot table[x=N, y=S] from \BW;
\addplot table[x=N, y=U] from \BW;
\legend{Volume, Surface, Update}
\end{axis}
\end{tikzpicture}
}
\caption{Profiled GFLOPS/s and bandwidth (GB/s) for volume, surface, and update kernels.  Results are presented for an Nvidia GTX 980 GPU, on a mesh of 9918 elements.  }
\label{fig:gflopsBW}
\end{figure}

Finally, we compute the GFLOPS per second and bandwidth (GB/s) achieved by each of the kernels for elastic wave propagation in our implementation.  The results are shown in Figure~\ref{fig:gflopsBW}, and are qualitatively similar to the results reported in \cite{modave2016gpu} for the volume and surface kernels for elasticity.  The GFLOPS/s and bandwidth for the update kernel fall between the reported values for the volume and surface kernel.  The run-time of the update kernel for elastic wave propagation (in which the weight-adjusted projection matrix is applied) constitutes between $40\%$ and $50\%$ of the total run-time for $N=1,\ldots,7$.  In comparison, the update kernel for piecewise constant material properties takes roughly $35\%$ of the run-time at $N=1$ and $10\%$ of the total run-time at $N=7$, due to the fact that no additional matrix multiplications are necessary in the update kernel if material properties are assumed to be constant within an element.  

}

\section{Conclusions}
\label{sec:conclusions}

This work presents a weight-adjusted discontinuous Galerkin (WADG) method for the linear elastic wave equations with arbitrary heterogeneous media.  The method is energy stable and high order accurate for arbitrary stiffness matrices, and a slight modification results in an energy stable method for curvilinear meshes as well.  The penalty numerical fluxes for this formulation are simple to derive and implement, and their lack of dependence on the stiffness matrix allows for a unified treatment of isotropic and anisotropic media.  Numerical examples confirm the accuracy of this method for analytic solutions of the elastic wave equations, as well as its high order accuracy with respect to a reference solution for smoothly varying heterogeneous media.  Results obtained using this method also show good agreement with existing results in the literature for both problems involving both isotropic and anisotropic heterogeneous media.  \reviewerTwo{Finally, we provide computational results demonstrating the performance of the proposed methods on a single GPU.}  

We note that the implementation of this method reduces to the application of the weight-adjusted mass matrix inverse and the evaluation of constant-coefficient terms in the DG formulation.  The cost of the latter step can be reduced (especially at high orders of approximation) by using fast methods based on Bernstein-Bezier bases for the application of derivative and lift matrices for constant-coefficient terms \cite{chan2015bbdg}.  \reviewerOne{Future work will also involve a more careful study of discretization parameters (such as the penalty parameters and the points per wavelength required for accuracy), as well as the application of the proposed method to more realistic geophysical settings.  }



\section{Acknowledgments}

The author gratefully thanks Thomas Hagstrom, Tim Warburton, Axel Modave, Ruichao Ye, and Mario Bencomo for helpful and informative discussions.  {The author is supported by the National Science Foundation under awards DMS-1719818 and DMS-1712639.  }

\bibliographystyle{unsrt}
\bibliography{dgpenalty}

\end{document}